\documentclass[11pt]{article}
\usepackage[margin=1in]{geometry}
\usepackage{amsmath,amssymb,amsthm,mathtools,bm,dsfont}
\usepackage{algorithm,algorithmicx,algpseudocode}
\usepackage{booktabs}
\usepackage{authblk}
\usepackage{multirow}
\usepackage{microtype}
\usepackage{upgreek}
\usepackage{lmodern}
\usepackage[dvipsnames]{xcolor}
\usepackage{graphicx}
\usepackage{etoc}
\usepackage[font=footnotesize, margin=1cm]{caption}
\usepackage{tabularx,array}

\newcolumntype{L}[1]{>{\raggedright\arraybackslash}p{#1}}
\newcolumntype{Y}{>{\raggedright\arraybackslash}X}

\usepackage[numbers,sort]{natbib}
 \bibpunct[, ]{[}{]}{,}{n}{}{,}%

\usepackage[hidelinks]{hyperref}
\hypersetup{
    colorlinks=false
}

\newcommand{\E}{\mathbb{E}}
\newcommand{\R}{\mathbb{R}}
\newcommand{\D}{\mathbb{D}}
\newcommand{\Var}{\mathrm{Var}}
\newcommand{\Cov}{\mathrm{Cov}}
\newtheorem{theorem}{Theorem}
\newtheorem{proposition}{Proposition}
\newtheorem{lemma}{Lemma}
\newtheorem{corollary}{Corollary}

\newtheorem{assumption}{Assumption}
\newtheorem{remark}{Remark}
\newtheorem{example}{Example}

\title{Robust Queueing for Single-Server Queues with Abandonment}
\author{Wei You}

\affil{Department of Industrial Engineering and Decision Analytics, The Hong Kong University of Science and Technology, Hong Kong SAR, China, \url{weiyou@ust.hk}}

\date{\today}

\begin{document}

\maketitle
\begin{abstract}
Single-server queues with customer abandonment arise in call centers and other service systems, yet their steady-state performance is analytically tractable only in special cases.
We develop Robust Queueing approximations for the mean stationary virtual waiting time in the $GI/GI/1{+}GI$ model.
Our starting point is an exact reverse-time representation of the workload in terms of the net-input process, defined as the work brought by customers who eventually enter service minus cumulative service capacity.
RQ approximates this random process by its mean plus a robustness parameter times its standard deviation.
Abandonment makes both moments endogenous because the probability that a customer enters service depends on the waiting time observed upon arrival.
We resolve this dependence by imposing self-consistency through a deterministic trial approximation to the mean stationary virtual waiting time.
For the drift, the Poisson compensator identity is exact under Poisson arrivals; under renewal arrivals, the resulting mean Palm correction is controlled on the relevant heavy-traffic optimizer scales.
For the variance, we develop a deterministic-time-change surrogate and a refined finite-system surrogate.
The refined surrogate heuristically interpolates between two proved variance limits using a scale-dependent variance-reduction factor derived from a heavy-traffic diffusion limit.
Both constructions reduce to one-dimensional fixed-point equations that can be solved by bisection using the arrival index of dispersion for counts, the service-time squared coefficient of variation, and the patience distribution.
We establish heavy-traffic limits for the $GI/GI/1{+}GI$ model and for both RQ fixed points.
These limits calibrate the robustness parameter and provide theoretical support for the variance-reduction function.
Numerical experiments show that the refined approximation is accurate over a broad parameter range and identify its principal limitation in systems with short patience times and high abandonment.
\end{abstract}

\textbf{Keywords:} robust queueing, customer abandonment, virtual waiting time, indices of dispersion, heavy-traffic limits

\section{Introduction}

Customer impatience and abandonment (reneging) are defining features of many modern service systems, including call centers, healthcare delivery, and online service platforms.
In these settings, customers may leave without receiving service when delays are perceived as too long, altering both operational efficiency and quality of service.
Much of the call-center literature therefore models systems with abandonment; see, e.g., the many-server asymptotic analysis in \cite{zeltyn2005call}, the Erlang-A call-center model in \cite{garnett2002designing}, and the survey in \cite{dai2012many}.
While these applications often involve many servers, single-server models remain important as building blocks for more complex service networks (e.g., sequential service stages) and as primitives in decomposition approximations.

In this paper we study the classical first-come-first-served $GI/GI/1{+}GI$ queue with customer abandonment.
Arrivals follow a renewal process, service requirements are i.i.d.\ with a general distribution, and patience times are i.i.d.\ with a general distribution; a customer abandons if service has not begun by the time its patience expires.
We focus on the \emph{virtual waiting time} (also called the \emph{offered waiting time}), denoted by $Z(t)$, and in particular on its stationary mean $\E[Z(\infty)]$.
The virtual waiting time is a fundamental performance metric because it directly summarizes the system's congestion, underlies delay announcements, and can be used to approximate related quantities such as the probability of abandonment and mean queue length \cite{ward2012asymptotic,lee2020stationary,lee2021stationary}.

Despite its apparent simplicity, the $GI/GI/1{+}GI$ model is analytically challenging.
Abandonment creates a nonlinear feedback loop: the waiting time affects which customers remain in queue, which in turn changes the future workload seen by subsequent arrivals.
Exact steady-state descriptions are available only in special cases; early work establishing structural relations between actual and virtual waiting times includes \cite{stanford1979reneging,baccelli1984single}.
For general primitives, one typically relies on asymptotic approximations, numerical schemes, or simulation.

We develop new \emph{robust queueing} (RQ) approximations for $\E[Z(\infty)]$ that are fast, require only low-dimensional traffic descriptors, and remain accurate away from classical heavy-traffic regimes.
Our approach builds on the stochastic RQ methodology that approximates single-server performance using reverse-time supremum representations and variability summaries in the form of \emph{indices of dispersion} \cite{fendick1989measurements,whitt2018using,whitt2019advantage}.
These ideas have been extended to open networks via IDC-based flow propagation, yielding an RQ network analyzer analogous in spirit to the classical queueing network analyzer (QNA) \cite{whitt1983queueing}; see \cite{whitt2022robust} and references therein.

\subsection{Literature Review}\label{sec:lit_review}

\paragraph{Exact analysis and structural properties.}
In single-server queues with deadlines or patience times, early work analyzed reneging and established fundamental stability and distributional relations; see, e.g., \cite{stanford1979reneging,baccelli1984single}.
Even for Poisson arrivals, general patience times lead to integral-equation characterizations rather than closed forms, and tractable steady-state formulas are typically restricted to Markovian special cases.

\paragraph{Heavy-traffic diffusion approximations.}
A major line of work develops diffusion approximations for $GI/GI/1{+}GI$ queues for the offered waiting time.
\citet{ward2005diffusion} show that, under conventional heavy-traffic scaling, the offered waiting time can be approximated by a reflected Ornstein--Uhlenbeck (ROU) diffusion.
\citet{reed2008approximating} introduce \emph{hazard-rate scaling} so that the heavy-traffic diffusion limit incorporates the full patience-time distribution through a nonlinear drift term.
\citet{lee2011convergence} further establish heavy-traffic convergence for general patience-time distributions (allowing, e.g., state-dependent arrival intensities) and derive limits for queue length.
A broader perspective on asymptotic regimes for queues with reneging is surveyed in \cite{ward2012asymptotic}.

Using the stationary distribution of a diffusion limit as a proxy for the steady-state queue requires an \emph{interchange of limits} justification.
For the $GI/GI/1{+}GI$ model, \cite{lee2020stationary} establishes convergence of the scaled stationary offered-waiting-time distribution (and moments) to the stationary distribution of the limiting diffusion, resolving a question left open in \cite{ward2005diffusion}.
This result is extended under more general patience-time scaling (including hazard-rate scaling) in \cite{lee2021stationary}.

Beyond diffusion limits derived from specific heavy-traffic parameter scalings, \cite{huang2018beyond} develops universal performance bounds and diffusion-based approximations for the $M/GI/1{+}GI$ queue that are valid uniformly over families of patience distributions and across heavy-traffic regimes.
These results provide complementary support for diffusion proxies, but they still rely on Markovian arrivals and do not directly address non-renewal inputs arising endogenously in networks.

\paragraph{Robust queueing and indices of dispersion.}
Indices of dispersion for counts/work were introduced as variability summaries of offered traffic and were used to predict mean workload in single-server queues in \cite{fendick1989measurements}.
The robust queueing approach in \cite{whitt2018using} shows how to convert IDC/IDW information into accurate approximations for mean steady-state workload in the general $G/G/1$ queue, with asymptotic correctness in light and heavy traffic and the ability to capture temporal dependence.
Subsequent work emphasizes the value of IDC-based descriptions \cite{whitt2019advantage} and extends the methodology to open networks through IDC propagation \cite{whitt2022robust}.
Robust queueing has also been pursued from a robust-optimization perspective \cite{bandi2015robust}, but our focus is on developing stochastic-model performance approximations in the IDC-based RQ framework.

\subsection{Contributions and Organization}\label{sec:intro_contrib}

The paper makes five contributions.

\begin{itemize}
\item \textbf{A stationary RQ formulation for abandonment queues.}
Starting from the exact reverse-time representation, we express the approximation as a deterministic scalar fixed point.
The fixed point separates the construction into an effective-input drift, an effective-input variance, a robustness calibration, and a one-dimensional solve.

\item \textbf{A drift approximation with an explicit renewal correction.}
For Poisson arrivals, predictable thinning gives an exact compensator for effective arrivals.
For renewal arrivals, we identify the discrepancy as a Palm/time-average correction and show that its mean is negligible on the optimizer scales relevant to the RQ heavy-traffic limits.

\item \textbf{Two variance surrogates.}
The first RQ surrogate treats effective input as a renewal-reward process evaluated at a deterministic time change.
The refined surrogate introduces a scale-dependent variance-reduction function $w_{c,k}$ that quantifies how abandonment feedback suppresses effective-input variability over longer horizons.

\item \textbf{Asymptotic analysis and calibration.}
We prove heavy-traffic limits for the $GI/GI/1{+}GI$ model and for both RQ fixed points.
A single formulation recovers the underloaded, critical, and overloaded scales, with the boundary determined by the order $k$ of the first nonzero derivative of the patience distribution at the origin.
The critical limits calibrate the robustness parameter $b$; the underloaded calibration $b=\sqrt{2}$ is recovered as a limit, while the overloaded leading order is independent of $b$.

\item \textbf{Numerical and network evidence.}
The refined algorithm is compared with established benchmarks over a broad parameter grid.
We also give a heuristic tandem extension that feeds an approximation of the upstream departure IDC into the downstream RQ calculation.
\end{itemize}

The rest of the paper is organized as follows.
Section~\ref{sec:prelim} reviews ordinary RQ, develops the effective-input representation for queues with abandonment, and summarizes the approximation pipeline.
Section~\ref{sec:drift_approx} constructs the drift surrogate.
Section~\ref{sec:RQ_ab1} gives the first RQ algorithm and its calibration.
Section~\ref{sec:var} derives the variance-reduction function and the refined RQ algorithm.
Section~\ref{sec:numerical} reports the numerical experiments, and Section~\ref{sec:conclusion} concludes.
The appendices review benchmark methods, describe the tandem and secondary-measure heuristics, and contain the proofs.

\section{Preliminaries}\label{sec:prelim}

\subsection{Review of Robust Queueing for the \texorpdfstring{$GI/GI/1$}{GI/GI/1} Model}

Consider a stable single-server $GI/GI/1$ queue with infinite waiting room and a first-come-first-served service discipline.
Let $A(t)$ denote an arrival counting process with stationary and ergodic increments, rate $\lambda$, and $\Var(A(t))<\infty$ for all $t>0$.
Let $\{V_i\}_{i\ge1}$ be an i.i.d.\ sequence of service times, independent of $A(\cdot)$, with mean $1/\mu$ and finite variance.
Let $\rho\triangleq \lambda/\mu$, and assume $\rho<1$.

A convenient workload representation is based on the cumulative work process $ \tilde Y(t) \triangleq \sum_{i=1}^{A(t)} V_i$ and the associated net-input process, under unit service capacity, $\tilde N(t) \triangleq \tilde Y(t)-t$.
For a system that starts empty at time $0$, the workload process $\tilde Z(t)$ is given by the Skorokhod reflection mapping applied to $\tilde N(\cdot)$:
\begin{equation}\label{eq:reverse_GG1}
    \tilde Z(t) = \tilde N(t)-\inf_{0\le u\le t}\tilde N(u)
    = \sup_{0\le s\le t} \bigl\{ \tilde N(t)-\tilde N(t-s) \bigr\}.
\end{equation}
Thus the workload is the running supremum of reverse-time net-input increments.

Exact analysis of \eqref{eq:reverse_GG1} is typically intractable in non-Markovian settings.
Robust Queueing (RQ) replaces the stochastic reverse-time increment inside the supremum by a deterministic surrogate: its stationary mean plus a robustness parameter $b$ times its stationary standard deviation.
For the ordinary $GI/GI/1$ model, we have $\E\bigl[ \tilde N(t)-\tilde N(t-s) \bigr] = -(1-\rho)s$ and $\Var\bigl( \tilde N(t)-\tilde N(t-s) \bigr) = \Var\bigl( \tilde Y(t)-\tilde Y(t-s) \bigr)$.
Define the index of dispersion for work (IDW) by
\[
    \tilde I_w(s)
    \triangleq \frac{ \Var\bigl(\tilde Y(t)-\tilde Y(t-s)\bigr) }{ \E\bigl[\tilde Y(t)-\tilde Y(t-s)\bigr]\E[V_1] }
    = \frac{ \Var\bigl(\tilde Y(t)-\tilde Y(t-s)\bigr) }{ \rho s/\mu }.
\]
The RQ surrogate for the reverse-time increment is therefore $-(1-\rho)s + b\sqrt{\rho s \tilde I_w(s) / \mu}$.

The stationary RQ approximation is obtained as a deterministic optimization problem by applying this surrogate to the stationary infinite-horizon reverse-time representation:
\begin{equation}\label{eq:RQ_GG1}
    \tilde Z_{\mathrm{RQ}} \triangleq \sup_{s\ge0} \left\{ -(1-\rho)s + b\sqrt{\rho s \tilde I_w(s) / \mu} \right\}.
\end{equation}
When $\tilde I_w(s)$ is bounded above, the objective in \eqref{eq:RQ_GG1} has negative linear drift and at most square-root growth in its variability term, so the supremum is finite.

\subsection{The Dynamics of the \texorpdfstring{$GI/GI/1{+}GI$}{GI/GI/1+GI} Model}

We now turn to the $GI/GI/1{+}GI$ queue with abandonment.
Customers arrive according to a general right-continuous counting process $A(t)$.
Let $U$ denote a generic interarrival time.
Each customer has an i.i.d.\ service time $V$ and an i.i.d.\ patience time $D$.
A customer abandons if it has not entered service before its patience time expires; the notation $+GI$ indicates that the patience-time distribution is general.

The dynamics are most conveniently described in terms of the \emph{virtual waiting time process} $Z(t)$, defined as the waiting time at time $t$ of a hypothetical customer arriving at $t$ with infinite patience (also called the \emph{offered waiting time}).
Let $T_i \triangleq \inf\{t>0: A(t)=i\}$ be the arrival time of the $i$th customer.
Given $Z(\cdot)$, customer $i$ is offered waiting time $W_i \triangleq Z(T_i-)$, and is eventually served if and only if $D_i>W_i$.

In contrast to the $GI/GI/1$ model without abandonment, the evolution of $Z(t)$ is driven by the \emph{effective} workload that will eventually be processed by the server.
Define the \emph{effective arrival process}
\begin{equation}\label{eq:effective_arrival}
    A_0(t) \triangleq \sum_{i=1}^{A(t)} \mathds{1}\{D_i>W_i\},
\end{equation}
the number of arrivals by time $t$ who will eventually enter service.
Closely related is the effective total-input process
\[
    Y(t) \triangleq \sum_{i=1}^{A(t)} V_i \mathds{1}\{D_i>W_i\},
\]
which counts the total amount of work brought by customers arriving by time $t$ who do not abandon.
The corresponding effective net-input process is
\begin{equation}\label{eq:effective_net_input}
    N(t) \triangleq Y(t) - t.
\end{equation}
Following the same reflection argument that yields \eqref{eq:reverse_GG1}, assuming the system starts empty at time $0$, the virtual waiting time admits the reverse-time (supremum) representation
\begin{equation}\label{eq:reverse_ab}
    Z(t) = \sup_{0\le s\le t}\bigl\{N(t)-N(t-s)\bigr\}.
\end{equation}
Comparing \eqref{eq:reverse_GG1} and \eqref{eq:reverse_ab}, the essential distinction lies in the underlying net-input process.
For the $GI/GI/1{+}GI$ model, the object of interest is the \emph{effective} net-input process \eqref{eq:effective_net_input}, which depends on the abandonment indicators $\mathds{1}\{D_i>W_i\}$ and hence on the offered waiting times themselves.

We impose the following assumptions.

\begin{assumption}\label{assumption}
\begin{enumerate}
\item Arrivals occur one at a time.
The arrival process $A(\cdot)$ is a stationary renewal process with rate $\lambda$.
Let $U$ denote an interarrival time, assume $\E[U^2]<\infty$, and set $c_a^2\triangleq\lambda^2\Var(U)\in(0,\infty)$.
Let $I_a(t)$ denote the index of dispersion for counts (IDC):
\[
    I_a(t) \triangleq \frac{\Var(A(t))}{\lambda t}.
\]
Define the rate-one IDC by $I_a^{(1)}(t)\triangleq I_a(t/\lambda)$, which is the IDC of the stationary renewal process with interarrival law $\lambda U$.
Equivalently, $I_a(t)=I_a^{(1)}(\lambda t)$.
Assume that $I_a^{(1)}(t)\to c_a^2$ as $t\to\infty$, and that $\sup_{t\ge0}I_a^{(1)}(t)<\infty$.
\item The service times are i.i.d.\ with mean $1/\mu$, and finite squared coefficient of variation $c_s^2>0$.
\item The patience times are i.i.d.\ and admit the scaling representation $D=\alpha^{-1}\tilde D$, where $\tilde D$ is a nonnegative random variable with $\E[\tilde D]=1$.
Under this scaling convention, the CDF of $D$ is $F_\alpha(t)=F(\alpha t)$, where $F$ denotes the cumulative distribution function of $\tilde D$.
We assume $F(0)=0$, $F$ is twice continuously differentiable on $[0,\infty)$, has support $[0,\infty)$, and its density $f=F'$ is bounded.
\item The arrival process, service times, and patience times are mutually independent.
\end{enumerate}
Throughout this paper, we set $c_x^2\triangleq c_a^2+c_s^2$.
\end{assumption}

\begin{remark}[Stationary version]\label{rmk:stationary}
The identity \eqref{eq:reverse_ab} is a finite-time identity for the system initialized empty.
The steady-state quantities used below are obtained by passing to the associated stationary version of the queue with impatience.
This is justified by the standard stability theory for $GI/GI/1{+}GI$ queues.
In particular, \citet[Section~2.2]{ward2005diffusion} state that a sufficient stability condition for the offered-waiting-time process in the reneging model to have a nondegenerate limiting distribution is $\rho \mathbb P(D=\infty)<1$, and attribute the offered-waiting-time result to \citet[Lemma~2]{baccelli1984single}.
Under Assumption~\ref{assumption}, patience times are finite almost surely, so this condition is automatically satisfied for every finite traffic intensity $\rho$.

For $u<t$, define the stationary effective input and net-input increments by
\[
    Y^{\mathrm{st}}(u,t]\triangleq \sum_{i:T_i\in(u,t]}V_i\mathds{1}\{D_i>Z^{\mathrm{st}}(T_i-)\},
    \qquad N^{\mathrm{st}}(u,t]\triangleq Y^{\mathrm{st}}(u,t]-(t-u),
\]
where $Z^{\mathrm{st}}(t)$ is the stationary virtual waiting time.
Applying \eqref{eq:reverse_ab} to a system started empty at time $-r$ and sending $r\to \infty$ along the stationary construction gives the reverse-time representation
\[
    Z^{\mathrm{st}}(t)=\sup_{s\ge0}N^{\mathrm{st}}(t-s,t],\qquad t\in \R.
\]
To lighten notation, when steady-state quantities are considered we write $Z(t)$ for $Z^{\mathrm{st}}(t)$ and $N(t)-N(t-s)$ for $N^{\mathrm{st}}(t-s,t]$.
For each finite horizon, the stationary increment means and variances used below are finite because the thinning indicators are bounded, service times have finite second moment, and $A(t)-A(u)$ has finite variance under Assumption~\ref{assumption}.
\end{remark}

In stationarity, the reverse-time representation motivates the generic RQ approximation
\begin{equation}\label{eq:RQ_raw_formulation}
    Z_{\mathrm{RQ}} = \sup_{s\ge0} \left\{ m(s)+b\sqrt{v(s)} \right\},
\end{equation}
where $m(s) = \E[N(t)-N(t-s)]$ and $v(s) = \Var(N(t)-N(t-s))$.
By stationarity, these terms do not depend on $t$.

\subsection{Road Map and Status of the RQ Approximations}
\label{sec:rq_roadmap}

The construction follows the same four-step logic for both algorithms.
First, start from the exact stationary reverse-time representation.
Second, for a deterministic trial approximation $z$ to the mean stationary virtual waiting time, replace the endogenous increment mean and variance by surrogates $m_z(s)$ and $v_z(s)$.
Third, maximize the deterministic envelope $m_z(s)+b\sqrt{v_z(s)}$ over the look-back horizon $s$.
Fourth, impose self-consistency by setting the resulting supremum equal to the trial value $z$.
The result is a scalar fixed point.

Table~\ref{tab:rq_roadmap} separates exact identities, approximation steps, asymptotic guidance, and numerical implementation.

\begin{table}[!ht]
\centering
\caption{End-to-end RQ construction and theoretical status.}
\label{tab:rq_roadmap}
\footnotesize
\renewcommand{\arraystretch}{1.14}
\setlength{\tabcolsep}{4pt}
\begin{tabularx}{\textwidth}
{@{}L{0.14\textwidth}L{0.25\textwidth}YL{0.18\textwidth}@{}}
\toprule
Stage & Object or input & Status & Location \\
\midrule

Inputs and output
&
$\lambda$, $\mu$, $c_s^2$, and $F_\alpha$; the first RQ uses the rate-one work IDW $I_w$, while the refined RQ uses the arrival IDC $I_a$, its limit $c_a^2$, $\alpha$, $k$, $\beta$, and precomputed tables for $w_{\tilde c_\alpha,k}$ and $b$.
The output is an approximation to the mean stationary virtual waiting time.
&
\emph{Model input/offline preprocessing.}
The variance-reduction function is tabulated before the scalar RQ solve.
&
Assumptions~\ref{assumption}--\ref{assumption:F};
Section~\ref{sec:compute_w};
Equations~\eqref{eq:RQ_ab_1} and~\eqref{eq:RQ_ab_2}.
\\
\addlinespace

Effective drift
&
Poisson-surrogate $m_z$.
&
\emph{Exact and approximate components.}
The Poisson compensator and renewal Palm identity are exact.
Omitting the Palm term and imposing self-consistency through the trial value $z$ are approximation steps; the mean of the omitted term is controlled on the RQ optimizer scales.
&
Section~\ref{sec:drift_approx};
Lemmas~\ref{lm:drift_poisson}--\ref{lm:correction};
Proposition~\ref{prop:palm_optimizer_scales}.
\\
\addlinespace

First variance
&
$v_z^{(1)}$ from $\lambda$, $\mu$, $F_\alpha$, and the rate-one work IDW $I_w$.
&
\emph{Approximation.}
Deterministic time-change renewal-reward surrogate used in the first RQ. This serves as a baseline method.
&
Section~\ref{sec:RQ_ab1_algo};
Equations~\eqref{eq:m_v1_def}--\eqref{eq:RQ_ab_1}.
\\
\addlinespace

Refined variance
&
$v_z^{(2)}$ from $\lambda$, $\mu$, $c_s^2$, $F_\alpha$, $I_a$, $c_a^2$, $\alpha$, $k$, $\beta$, and $w_{\tilde c_\alpha,k}$.
&
\emph{Asymptotically guided heuristic.}
The surrogate interpolates the proved fixed-horizon long-patience and heavy-traffic variance limits and is used in the numerical experiments.
&
Theorem~\ref{thm:HT_limit};
Sections~\ref{sec:def_wck}, \ref{sec:HT_var},
and~\ref{sec:RQ_ab2_algo};
Corollary~\ref{thm:HT_var};
Lemma~\ref{lm:var_lim};
Equations~\eqref{eq:IDW_effective_approx}--\eqref{eq:RQ_ab_2}.
\\
\addlinespace

Calibration and solve
&
$b$ and the selected pair $(m_z,v_z^{(j)})$.
&
\emph{Asymptotic calibration and numerical solution.}
Critical heavy-traffic matching calibrates $b$; the underloaded limit gives $b=\sqrt{2}$, and the overloaded leading-order limit is insensitive to $b$.
The refined implementation uses offline tabulation and interpolation based on canonical $M/M/1{+}M$ and $M/M/1{+}E_k$ models. For fixed $b$, the unique scalar fixed point is solved by bisection.
&
Sections~\ref{sec:calibrate_first_RQ}
and~\ref{sec:calibrate_refined_RQ};
Equations~\eqref{eq:RQ_ab_1} and~\eqref{eq:RQ_ab_2}.
\\
\bottomrule
\end{tabularx}
\end{table}

The tandem construction and the additional performance-measure formulas are heuristic extensions rather than stages of the main RQ algorithm; see Appendices~\ref{sec:appendix_tandem}
and~\ref{sec:heuristic}, respectively.

\section{The Drift of the Effective Net-Input Process}\label{sec:drift_approx}

To evaluate the mean of the effective net-input process, it is convenient to exploit martingale representations of counting processes.
For the Poisson-arrival argument below, let $\mathbb F=\{\mathcal F_t\}_{t\ge0}$ be the usual augmentation of the natural filtration generated by the arrival process and the service- and patience-time marks revealed at their arrival epochs.
By the Doob--Meyer decomposition \citep[Theorem~10.5]{kallenberg2021foundations}, $A(t)-\Lambda(t)$ is an $\mathbb F$-martingale, where $\Lambda(\cdot)$ is the (predictable) compensator of $A(\cdot)$.
This representation is particularly tractable when $A(\cdot)$ is a homogeneous Poisson process, in which case $\Lambda(t)=\lambda t$.
We begin with this Poisson setting.

\subsection{Queues with Poisson arrivals}

Let $\bar F_{\alpha}(t)\triangleq 1-F_{\alpha}(t)$ denote the complementary CDF of the patience time.
Recall the effective arrival process $A_0(\cdot)$ defined in \eqref{eq:effective_arrival}, and the effective net-input process $N(t)$ defined in \eqref{eq:effective_net_input}.
In particular, $Z(t-)$ is $\mathbb F$-predictable.

Under Poisson arrivals, consider an arrival occurring at time $u$.
Conditional on the pre-arrival history $\mathcal F_{u-}$, the offered waiting time $Z(u-)$ is known, while the patience time $D$ is independent of $\mathcal F_{u-}$.
Therefore, $\mathbb P\left(D>Z(u-)\mid \mathcal F_{u-}\right)=\bar F_{\alpha} \bigl(Z(u-)\bigr)$.
In other words, relative to the filtration $\mathbb F$, the process $A_0(\cdot)$ is obtained from the Poisson arrivals via \emph{predictable thinning} with retention probability $\bar F_{\alpha}(Z(u-))$.
Consequently, $A_0(\cdot)$ has $\mathbb F$-intensity $\lambda \bar F_{\alpha}(Z(u-))$, and its compensator is $\Lambda_0(t) = \lambda\int_0^t \bar F_{\alpha} \bigl(Z(u-)\bigr) du$.
Equivalently, $M_0(t)\triangleq A_0(t)-\Lambda_0(t)$ is an $\mathbb F$-martingale.

For the effective work process, conditional on $\mathcal F_{u-}$, the service time mark $V$ is independent of $\mathcal F_{u-}$ and independent of $D$, so $ \E \left[V \mathds{1}\{D>Z(u-)\}\mid \mathcal F_{u-}\right] =\E[V] \bar F_{\alpha} \bigl(Z(u-)\bigr)=\frac{1}{\mu}\bar F_{\alpha} \bigl(Z(u-)\bigr). $

\begin{lemma}\label{lm:drift_poisson}
Under Assumption~\ref{assumption}, suppose that $A(\cdot)$ is a homogeneous Poisson process with rate $\lambda$.
Then, for any $0\le s\le t$,
\begin{align*}
    \E\left[A_0(t)-A_0(t-s)\right] &= \lambda \E\left[\int_{t-s}^{t}\bar F_{\alpha}\bigl(Z(u-)\bigr) du\right],\\
    \E\left[N(t)-N(t-s)\right]
    &= \frac{\lambda}{\mu} \E\left[\int_{t-s}^{t}\bar F_{\alpha}\bigl(Z(u-)\bigr) du\right] - s.
\end{align*}
Moreover, if $\{Z(u)\}_{u\in \R}$ is strictly stationary, then for all $0\le s\le t$,
\begin{align*}
    \E\left[A_0(t)-A_0(t-s)\right] &= \lambda s \E\left[\bar F_{\alpha}\bigl(Z(0)\bigr)\right],\\
    \E\left[N(t)-N(t-s)\right] &= \left(\frac{\lambda}{\mu}\E\left[\bar F_{\alpha}\bigl(Z(0)\bigr)\right]-1\right)s.
\end{align*}
\end{lemma}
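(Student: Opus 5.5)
The plan is to use the predictable-thinning structure described just before the statement, making the compensator argument rigorous through a marked-point-process (Campbell / predictable projection) identity. Write
\[
A_0(t)-A_0(t-s)=\int_{(t-s,t]}\int \mathds{1}\{d>Z(u-)\}\,\mathcal N(du,dv,dd),
\qquad
Y(t)-Y(t-s)=\int_{(t-s,t]}\int v\,\mathds{1}\{d>Z(u-)\}\,\mathcal N(du,dv,dd),
\]
where $\mathcal N$ is the marked Poisson point process of arrival epochs with i.i.d.\ marks $(V_i,D_i)$. By Assumption~\ref{assumption}(4), the marks are independent of the arrival process and of each other. Hence, with respect to $\mathbb F$, $\mathcal N$ has compensator $\lambda\,du\,\mathbb P_V(dv)\,\mathbb P_D(dd)$.

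The integrands $(u,v,d)\mapsto \mathds{1}\{d>Z(u-)\}$ and $v\,\mathds{1}\{d>Z(u-)\}$ are $\mathcal P(\mathbb F)\otimes\mathcal B(\R_+^2)$-measurable, since $Z(u-)$ is left-continuous and adapted, hence predictable. Both are nonnegative. The standard compensator identity for nonnegative predictable integrands (e.g.\ the Kallenberg reference already cited for Doob--Meyer) then gives
\[
\E\left[\int_{(t-s,t]}\int H\,d\mathcal N\right]=\E\left[\int_{t-s}^t\int H\,\lambda\,du\,\mathbb P_V(dv)\,\mathbb P_D(dd)\right].
\]
The inner integration is evaluated by Fubini, using independence of $V$ and $D$. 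It yields $\lambda\bar F_\alpha(Z(u-))$ for $A_0$, and $(\lambda/\mu)\bar F_\alpha(Z(u-))$ for $Y$. Nonnegativity means no integrability issue arises: both sides are finite because they are bounded by $\lambda s$ and $\lambda s/\mu$. Subtracting the deterministic $s$ gives the formula for $N$.

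For the stationary part, I apply Tonelli to exchange $\E$ and $\int du$, obtaining $\int_{t-s}^t \E[\bar F_\alpha(Z(u-))]\,du$. It remains to show $\E[\bar F_\alpha(Z(u-))]=\E[\bar F_\alpha(Z(0))]$ for every $u$. Strict stationarity gives $Z(u)\stackrel{d}{=}Z(0)$ for all $u$. The left limit $Z(u-)$ equals $Z(u)$ almost surely for each fixed $u$, because $Z$ jumps only at arrival epochs and a Poisson process has no atom at a fixed time almost surely. Equivalently, $Z(u-)\ne Z(u)$ holds only on a countable set, so the $du$-integral is unaffected in any case. Therefore the integrand is constant in $u$, and the integral equals $s\,\E[\bar F_\alpha(Z(0))]$. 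The two displayed identities follow.

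The main obstacle is the measure-theoretic justification that the stationary version is compatible with the filtration. The Poisson compensator must remain valid for the stationary process on $\R$, i.e.\ marks arriving after time $u$ must be independent of $\mathcal F_{u-}$, which includes the infinite past. I would handle this by taking $\mathbb F$ generated by the two-sided marked Poisson process up to time $u$. In the stationary construction of Remark~\ref{rmk:stationary}, $Z^{\mathrm{st}}(u-)$ is a measurable functional of the past only. Independent increments of the marked Poisson process then make the compensator $\lambda\,du\otimes\mathbb P_V\otimes\mathbb P_D$ hold on the full line. As a fallback, I would prove the identity for the system started empty at $-r$ and pass $r\to\infty$ by bounded convergence, using that $\bar F_\alpha\le 1$ is continuous.
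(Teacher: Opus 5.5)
Your proposal is correct and follows essentially the same route as the paper. The paper takes expectations in the decompositions $A_0=M_0+\Lambda_0$ and $Y=M_Y+\Lambda_Y$, with compensators $\lambda\int\bar F_\alpha(Z(u-))\,du$ and $(\lambda/\mu)\int\bar F_\alpha(Z(u-))\,du$, which is exactly your compensator identity for the marked Poisson random measure (your version with nonnegative predictable integrands just makes the martingale and integrability step more explicit); the stationary part is likewise identical, namely Tonelli, then $Z(u-)=Z(u)$ a.s.\ at each fixed $u$ because a Poisson process has no point at a deterministic time, and then stationarity.
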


\subsection{Queues with a General Renewal Arrival Process}

Motivated by Lemma~\ref{lm:drift_poisson}, we introduce the Poisson surrogate
\begin{equation}\label{eq:Lambda}
    \Lambda_t(s) \triangleq \lambda \int_{t-s}^{t} \bar F_{\alpha}\bigl(Z(u-)\bigr) du,
\end{equation}
which is the compensator increment of the effective arrival process in the Poisson case.
For a general renewal arrival process, the corresponding effective-arrival integral admits the exact decomposition
\[
    \int_{t-s}^{t} \bar F_{\alpha}\bigl(Z(u-)\bigr)dA(u) = \Lambda_t(s) + \delta_t(s),
    \quad \text{where} \quad \delta_t(s)
    \triangleq \int_{t-s}^{t} \bar F_{\alpha}\bigl(Z(u-)\bigr)d\bigl(A(u)-\lambda u\bigr).
\]
When $A(\cdot)$ is Poisson, $A(u)-\lambda u$ is a martingale and the predictability of $Z(u-)$ implies $\E[\delta_t(s)]=0$, recovering Lemma~\ref{lm:drift_poisson}.
For a general renewal process, $A(u)-\lambda u$ is not a martingale, and the correction term need not have mean zero.

\subsubsection{Stationary Palm Correction}

The RQ fixed point uses stationary mean increments, so define
\[
    \delta_\alpha(s)\triangleq\int_{(0,s]}\bar F_\alpha\bigl(Z(u-)\bigr)d\bigl(A(u)-\lambda u\bigr),\qquad s\ge0,
\]
where $Z(\cdot)$ is the stationary offered-waiting-time process.
Let $\mathbb P^0$ and $\E^0$ denote the arrival Palm law and expectation.
Under $\mathbb P^0$, let $U$ be the next interarrival time and let $Z_{\mathrm{pre}}$ and $Z_{\mathrm{post}}$ be the offered waiting times immediately before and after the arrival at time $0$.
Thus $Z_{\mathrm{post}}=Z_{\mathrm{pre}}+V_0\mathds{1}\{D_0>Z_{\mathrm{pre}}\}$.
At a deterministic time under $\mathbb P$, the stationary workload is denoted by $Z(0)$.
The identities used below are taken from \cite{bremaud1993stationary}.

\begin{lemma}[Palm form of the renewal correction]\label{lm:correction}
Under Assumption~\ref{assumption}, for every $s\ge0$,
\begin{equation}\label{eq:palm_delta}
    \E[\delta_\alpha(s)]=\lambda s\Delta_\alpha,
    \qquad
    \Delta_\alpha\triangleq\E^0[\bar F_\alpha(Z_{\mathrm{pre}})]-\E[\bar F_\alpha(Z(0))].
\end{equation}
Moreover,
\begin{equation}\label{eq:Delta_alpha_density}
    \Delta_\alpha
    =\alpha\E^0\left[\int_0^{U\wedge Z_{\mathrm{post}}}\{1-\lambda(U-v)\}f\bigl(\alpha(Z_{\mathrm{post}}-v)\bigr)dv\right],
\end{equation}
and
\begin{equation}\label{eq:palm_linear_bound}
    |\E[\delta_\alpha(s)]|\le\frac{3+c_a^2}{2}\|f\|_\infty\alpha s,
    \qquad s\ge0.
\end{equation}
\end{lemma}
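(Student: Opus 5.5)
The plan has three parts: prove \eqref{eq:palm_delta} from the Campbell--Mecke formula, prove \eqref{eq:Delta_alpha_density} by a cycle formula together with the deterministic decay of $Z$ between arrivals, and then bound the density form to get \eqref{eq:palm_linear_bound}.

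\emph{Step 1: proof of \eqref{eq:palm_delta}.}
Split $\delta_\alpha(s)$ into $\int_{(0,s]}\bar F_\alpha(Z(u-))\,dA(u)$ minus $\lambda\int_0^s\bar F_\alpha(Z(u-))\,du$.

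For the first term, note that $\bar F_\alpha(Z(u-))$ evaluated at an arrival epoch is a functional of the process seen from that arrival. The Campbell--Mecke (Palm) formula for the stationary marked point process \citep{bremaud1993stationary} then gives
\[
\E\Bigl[\int_{(0,s]}\bar F_\alpha(Z(u-))\,dA(u)\Bigr]=\lambda s\,\E^0[\bar F_\alpha(Z_{\mathrm{pre}})].
\]
For the second term, use stationarity of $Z$ and Fubini; the integrand is bounded by $1$. This gives $\lambda s\,\E[\bar F_\alpha(Z(0))]$. Subtracting the two yields \eqref{eq:palm_delta}.

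\emph{Step 2: proof of \eqref{eq:Delta_alpha_density}.}
Use the Palm inversion (cycle) formula: $\E[g(Z(0))]=\lambda\E^0\int_0^U g(Z(v))\,dv$. Between arrivals the offered wait decreases at unit rate to zero, so $Z(v)=(Z_{\mathrm{post}}-v)^+$ for $v\in[0,U)$. Hence
\[
\E[\bar F_\alpha(Z(0))]=\lambda\E^0\int_0^U\bar F_\alpha\bigl((Z_{\mathrm{post}}-v)^+\bigr)\,dv .
\]
The first Palm term needs a change of reference point. The pre-arrival value at the next arrival is $Z_{\mathrm{pre}}'=(Z_{\mathrm{post}}-U)^+$, and by Palm stationarity (shift invariance under $\mathbb P^0$) it has the same law as $Z_{\mathrm{pre}}$. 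Therefore $\E^0[\bar F_\alpha(Z_{\mathrm{pre}})]=\E^0[\bar F_\alpha((Z_{\mathrm{post}}-U)^+)]$.

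Now write $\bar F_\alpha(x)=1-F(\alpha x)$ and introduce $h(v)\triangleq F(\alpha(Z_{\mathrm{post}}-v)^+)$. Then
\[
\Delta_\alpha=\E^0\Bigl[\lambda\int_0^U h(v)\,dv-h(U)\Bigr],
\]
where the constant terms cancel because $\lambda\E^0[U]=1$. Since $F(0)=0$, $h$ vanishes for $v\ge Z_{\mathrm{post}}$, and on $[0,Z_{\mathrm{post}}]$ we have $h'(v)=-\alpha f(\alpha(Z_{\mathrm{post}}-v))$. Write $h(v)=h(U)+\int_v^U(-h'(r))\,dr$ and apply Fubini. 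After collecting terms the result is
\[
\Delta_\alpha=\E^0\Bigl[(\lambda U-1)h(U)\Bigr]+\alpha\,\E^0\Bigl[\int_0^{U\wedge Z_{\mathrm{post}}}\lambda v\,f\bigl(\alpha(Z_{\mathrm{post}}-v)\bigr)\,dv\Bigr].
\]
Next expand $h(U)=\int_U^{Z_{\mathrm{post}}}\alpha f\,dv$ when $U<Z_{\mathrm{post}}$; it is zero otherwise.

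\emph{Main obstacle.} The stated form \eqref{eq:Delta_alpha_density} has the kernel $1-\lambda(U-v)$ on $[0,U\wedge Z_{\mathrm{post}}]$, whereas my derivation naturally produces a kernel $\lambda v$ together with a boundary term $(\lambda U-1)h(U)$. To reconcile them I would first try reversing the orientation, that is, expressing $\bar F_\alpha(Z_{\mathrm{pre}})$ relative to the previous cycle, or rewriting $h(0)-h(U)$. The Palm law of $Z_{\mathrm{post}}$ conditioned on $U$ must be handled carefully. For a renewal process $U$ is independent of $Z_{\mathrm{post}}$ under $\mathbb P^0$, because $Z_{\mathrm{post}}$ depends only on the past. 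Using this independence, I would rewrite $\E^0[h(U)]$ by integrating in the variable $r=Z_{\mathrm{post}}-v$, swapping the roles of the integration limits until the kernel takes the form $1-\lambda(U-v)$. This bookkeeping is where I expect the difficulty to lie.

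\emph{Step 3: proof of \eqref{eq:palm_linear_bound}.}
Bound the integrand in \eqref{eq:Delta_alpha_density} by $\|f\|_\infty\,|1-\lambda(U-v)|$, and extend the integral to all of $[0,U]$. This gives
\[
|\Delta_\alpha|\le\alpha\|f\|_\infty\,\E^0\Bigl[\int_0^U\bigl(1+\lambda(U-v)\bigr)\,dv\Bigr]=\alpha\|f\|_\infty\Bigl(\E^0[U]+\tfrac{\lambda}{2}\E^0[U^2]\Bigr).
\]
With $\E^0[U]=1/\lambda$ and $\E^0[U^2]=(1+c_a^2)/\lambda^2$, the right-hand side equals $\alpha\|f\|_\infty(3+c_a^2)/(2\lambda)$. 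Multiplying by $\lambda s$ and using \eqref{eq:palm_delta} gives \eqref{eq:palm_linear_bound}.
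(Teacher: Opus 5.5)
Your Steps 1 and 3, and the cycle and shift identities in Step 2, coincide with the paper's proof. Your intermediate identity
\[
\Delta_\alpha=\E^0\bigl[(\lambda U-1)h(U)\bigr]+\alpha\,\E^0\Bigl[\int_0^{U\wedge Z_{\mathrm{post}}}\lambda v\,f\bigl(\alpha(Z_{\mathrm{post}}-v)\bigr)\,dv\Bigr]
\]
is also correct. However, you stop before \eqref{eq:Delta_alpha_density}, and this is a genuine gap, not just bookkeeping. Step 3 bounds the integrand of \eqref{eq:Delta_alpha_density}, which you have not derived. Your own representation cannot replace it, because the boundary term $(\lambda U-1)h(U)$ is not pointwise $O(\alpha)$: when $Z_{\mathrm{post}}$ is of order $1/\alpha$, $h(U)=F(\alpha(Z_{\mathrm{post}}-U)^+)$ is of order one. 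Its mean is small only after an order-one piece has been centered away. Neither re-rooting at the previous cycle nor the substitution $r=Z_{\mathrm{post}}-v$ does that centering.

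The missing idea is to expand around $v=0$, that is, around $Z_{\mathrm{post}}$, instead of around $v=U$:
\[
h(U)=h(0)-\alpha\int_0^{U\wedge Z_{\mathrm{post}}}f\bigl(\alpha(Z_{\mathrm{post}}-v)\bigr)\,dv .
\]
The term $h(0)=F(\alpha Z_{\mathrm{post}})$ depends on $Z_{\mathrm{post}}$ alone. Under the renewal Palm law, $U$ is independent of $Z_{\mathrm{post}}$ with $\E^0[U]=1/\lambda$, so $\E^0[(\lambda U-1)h(0)]=0$. The remaining piece contributes the kernel $1-\lambda U$, and adding your $\lambda v$ gives exactly $1-\lambda(U-v)$.

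The paper does this in one stroke. It applies $\bar F_\alpha\bigl((x-u)^+\bigr)=\bar F_\alpha(x)+\alpha\int_0^{u\wedge x}f\bigl(\alpha(x-v)\bigr)\,dv$ to both the Palm term and the cycle integral. The zeroth-order terms collapse to $(1-\lambda U)\bar F_\alpha(Z_{\mathrm{post}})$, which has $\E^0$-mean zero by the same independence. Fubini on the first-order terms then produces the kernel. With this step in place, your Step 3 is correct as written.
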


Lemma~\ref{lm:correction} identifies the renewal-arrival drift correction as a Palm/time-average discrepancy.
The exact stationary effective net-input drift is
\[
    \E[N(s)-N(0)]=\{\rho\E[\bar F_\alpha(Z(0))]-1\}s+\rho s\Delta_\alpha.
\]
The Poisson-surrogate drift approximation neglects $\rho s\Delta_\alpha$:
\begin{equation}\label{eq:mean_drift}
    \E[N(s)-N(0)]\approx\{\rho\E[\bar F_\alpha(Z(0))]-1\}s.
\end{equation}
Equivalently, the surrogate effective-arrival mean is $\lambda s\E[\bar F_\alpha(Z(0))]$.

\begin{remark}[Control of the mean Palm correction]\label{rem:control_palm}
The bound \eqref{eq:palm_linear_bound} is uniform over the heavy-traffic family because $c_a^2$ is fixed.
Since the stationary RQ formulation takes a supremum over $s\ge0$, a linear bound alone does not control growing optimizing horizons.
Proposition~\ref{prop:palm_optimizer_scales} verifies negligibility after the relevant horizon scales have been identified.
\end{remark}

\section{A First Robust Queueing Algorithm}\label{sec:RQ_ab1}

We now propose our first approximation for the variance of the effective net-input increment $N(t)-N(t-s)$ based on a (deterministic) time-change of the renewal arrival process.
Combining this variance approximation with the drift approximation from Section~\ref{sec:drift_approx} yields our first RQ algorithm for the virtual waiting time.

Recall that the effective net-input process is obtained by state-dependent thinning of renewal rewards: an arrival contributes service work if and only if its patience time exceeds the offered waiting time at arrival.
Although patience times are independent of the system primitives, the thinning decision $\mathds{1}\{D_i>W_i\}$ is correlated with the arrival process through the offered waiting time $W_i=Z(T_i-)$.
As an approximation, we treat this correlation as negligible and model the effective input over $(t-s,t]$ via a stationary renewal reward surrogate evaluated at a deterministically rescaled time.

\subsection{The First RQ Algorithm}\label{sec:RQ_ab1_algo}

Let $\tilde A(\cdot)$ denote a \emph{rate-one} version of the renewal arrival process, i.e., if $A(\cdot)$ has i.i.d.\ interarrival times $U$ with $\E[U]=1/\lambda$, then $\tilde A(\cdot)$ is the renewal counting process with interarrival times $\lambda U$ and $\E[\tilde A(t)]=t$.
Motivated by the Poisson case and the drift surrogate $\Lambda_t(s)$ in \eqref{eq:Lambda}, we approximate the effective net-input increment by
\[
    N(t)-N(t-s) = \sum_{i=A(t-s)+1}^{A(t)} V_i\mathds{1}\{D_i>W_i\} - s
    \approx \sum_{i=1}^{\tilde A(\Lambda_t(s))} V_i - s,
\]
where $\Lambda_t(s)=\lambda\int_{t-s}^{t}\bar F_\alpha(Z(u-))du$ is defined in \eqref{eq:Lambda}.
For every deterministic $\ell\ge0$,
\begin{equation}\label{eq:renewal_reward_variance}
    \Var\left(\sum_{i=1}^{\tilde A(\ell)}V_i\right)
    =\frac{\ell}{\mu^2}I_w(\ell),
\end{equation}
where $I_w(\cdot)$ is the IDW associated with rate-one $\tilde A$ and the service times $\{V_i\}$.
Since $\tilde A(t)\stackrel{d}{=}A(t/\lambda)$ and the service times are independent of the arrival process, conditioning on $\tilde A(t)$ gives $I_w(t)=I_a^{(1)}(t)+c_s^2$.
In particular, under Assumption~\ref{assumption} the IDW is well defined, continuous, and bounded, with $I_w(\infty)=\lim_{t\to\infty}I_w(t)=c_x^2\in(0,\infty)$.
We use \eqref{eq:renewal_reward_variance} at the random clock
$\ell=\Lambda_t(s)$ as a conditional variance surrogate:
\begin{equation}\label{eq:approx_var_RQ1}
    \Var\bigl(N(t)-N(t-s)\mid\Lambda_t(s)\bigr)
    \approx
    \frac{\Lambda_t(s)}{\mu^2}
    I_w\bigl(\Lambda_t(s)\bigr).
\end{equation}

Note that $\Lambda_t(s)$ defined in \eqref{eq:Lambda} depends on the state process $Z(\cdot)$, so we do not yet have a deterministic RQ approximation.
The remaining approximation is a stationary self-consistency step.
Let $z$ denote a deterministic trial approximation to the mean stationary virtual waiting time.
Over a reverse-time interval of length $s$, we approximate the abandonment probability $\bar F_\alpha(Z(u-))$ by the constant $\bar F_\alpha(z)$.
This replaces the random effective-arrival mean $\Lambda_t(s) = \lambda \int_{t-s}^{t} \bar F_{\alpha}\bigl(Z(u-)\bigr) du$ by $\lambda \bar F_\alpha(z)s$.

Combining the drift approximation \eqref{eq:mean_drift} with the variance surrogate \eqref{eq:approx_var_RQ1}, the RQ surrogate for the stationary reverse-time increment $N(t)-N(t-s)$, conditional on the trial value $z$, is $m_z(s)+b\sqrt{v^{(1)}_z(s)}$, where
\begin{equation}\label{eq:m_v1_def}
    m_z(s) \triangleq \bigl(\rho\bar F_\alpha(z)-1\bigr)s,
    \qquad v^{(1)}_z(s) \triangleq \frac{\lambda \bar F_\alpha(z)s}{\mu^2} I_w\bigl(\lambda \bar F_\alpha(z)s\bigr).
\end{equation}
Substituting this deterministic surrogate into the stationary RQ supremum representation \eqref{eq:RQ_raw_formulation} gives the first stationary RQ fixed-point equation
\begin{align}
    Z_{\mathrm{RQ}_1} & = \sup_{s\ge 0} \left\{ m_{Z_{\mathrm{RQ}_1}}(s)+b\sqrt{v^{(1)}_{Z_{\mathrm{RQ}_1}}(s)} \right\}
    = \sup_{u\ge 0} \left\{ \rho u - \frac{u}{\bar F_\alpha(Z_{\mathrm{RQ}_1})} + b\sqrt{\frac{\rho u}{\mu} I_w(\lambda u)} \right\}, \label{eq:RQ_ab_1}
\end{align}
where the second equality uses the change of variables $u=\bar F_\alpha(Z_{\mathrm{RQ}_1})s$, so that $\lambda\bar F_\alpha(Z_{\mathrm{RQ}_1})s=\lambda u$.
This equation defines the steady-state RQ approximation directly as a scalar fixed point.

Define the mapping
\[
    \Psi(z)\triangleq \sup_{u\ge 0} \left\{ \rho u - \frac{u}{\bar F_\alpha(z)} + b \sqrt{\frac{\rho u}{\mu} I_w(\lambda u)} \right\}.
\]

\begin{remark}[Existence of the first RQ fixed point]\label{rem:RQ1_existence}
Assumption~\ref{assumption} provides a simple sufficient condition for the existence of a unique solution to \eqref{eq:RQ_ab_1}.
In particular, $z\mapsto \bar F_\alpha(z)$ is continuous, strictly decreasing, and satisfies $\bar F_\alpha(z)\downarrow0$ as $z\to \infty$.
Furthermore, $I_w$ is continuous, bounded above, and satisfies $c_x^2=\lim_{t\to \infty}I_w(t)\in(0,\infty)$.
For any $z$ such that $\rho\bar F_\alpha(z)<1$, the objective defining $\Psi(z)$ is bounded above by
\[
    -\left(\frac{1}{\bar F_\alpha(z)}-\rho\right)u + b\sqrt{\frac{\rho\|I_w\|_\infty}{\mu}u},
\]
and hence $\Psi(z)<\infty$.
Thus the fixed-point equation can only have a finite solution in the region $\rho\bar F_\alpha(z)<1$.
On any compact subset of this region, the negative linear coefficient $1/\bar F_\alpha(z)-\rho$ is bounded away from zero, so the maximizer in the definition of $\Psi(z)$ is localized to a compact interval in $u$, uniformly over $z$ in that subset.
It follows that $\Psi$ is continuous on the region $\rho\bar F_\alpha(z)<1$.
Moreover, since $\bar F_\alpha(z)\downarrow0$, the same upper bound implies $\Psi(z)\to 0$ as $z\to \infty$.
If $\rho\bar F_\alpha(0)<1$, then $\Psi(0)\ge0$, while $\Psi(z)-z\to -\infty$ as $z\to \infty$, so continuity gives a solution.
If $\rho\bar F_\alpha(0)\ge1$, let $z_\rho$ be the unique value satisfying $\rho\bar F_\alpha(z_\rho)=1$.
As $z\downarrow z_\rho$, the negative linear coefficient $1/\bar F_\alpha(z)-\rho$ tends to zero, and the condition $c_x^2>0$ implies $\Psi(z)\to \infty$.
Since $\Psi(z)-z\to -\infty$ as $z\to \infty$, continuity again gives a solution.
Finally, because $z\mapsto\Psi(z)$ is nonincreasing, $z\mapsto\Psi(z)-z$ is strictly decreasing.
Therefore, under Assumption~\ref{assumption}, \eqref{eq:RQ_ab_1} admits a unique finite solution.
\end{remark}

In practice, the unique solution $Z_{\mathrm{RQ}_1}=Z_{\mathrm{RQ}_1}(b;\lambda,\mu,F_\alpha,I_w)$ can be computed efficiently by bisection.
We discuss calibration of $b$ in Section~\ref{sec:calibrate_first_RQ}.

\subsection{Heavy-Traffic Limit for the First RQ Algorithm}

The choice of the robustness parameter $b$ is central to the accuracy of the RQ approximation.
To motivate our calibration of $b$, we establish heavy-traffic limits for the steady-state RQ fixed point in \eqref{eq:RQ_ab_1} and compare these limits with the corresponding heavy-traffic asymptotics for the canonical $M/M/1{+}GI$ model.
A key theme is how the scaling of the RQ solution (and, by comparison, the mean offered waiting time) depends on the local behavior of the patience-time distribution near the origin.
Recall from Assumption~\ref{assumption} that the patience-time scaling is $F_\alpha(t)=F(\alpha t)$ and $\bar F_\alpha(t)=1-F_\alpha(t)=\bar F(\alpha t)$, where $F$ is the CDF of $\tilde D$ with mean $1$.

To formalize the relevant local behavior of $F$ at $0$, we impose the following regularity assumption.

\begin{assumption}\label{assumption:F}
There exists an integer $k=k(F)\ge 1$ such that $F$ is $k$ times continuously differentiable on $[0,\infty)$ and
\[
    F^{(j)}(0)=0 \ \text{for } j=0,1,\dots,k-1, \qquad F^{(k)}(0)>0,
\]
where $F^{(j)}$ denotes the $j$th derivative.
Setting $\beta\triangleq F^{(k)}(0)/k! > 0$, then
\[
    F(x)=\beta x^k + o(x^k), \qquad x\downarrow 0.
\]
\end{assumption}

We consider the long-patience--heavy-traffic limit indexed by the abandonment rate $\alpha\downarrow0$.
The service rate $\mu$ and the standardized service-time law of $\mu V^\alpha$ are fixed across the family.
The arrival rate is $\lambda_\alpha=\rho_\alpha\mu$, and the standardized interarrival law of $\lambda_\alpha U^\alpha$ is also fixed across the family.
Consequently, $c_a^2$, $c_s^2$, $I_a^{(1)}$, and $I_w=I_a^{(1)}+c_s^2$ do not depend on $\alpha$.
We assume $\alpha^{-\gamma}(\rho_\alpha-1)\to c$ for constants $\gamma>0$ and $c\in\R$.
Thus $\rho_\alpha\to1$ as $\alpha\downarrow0$.
Define the threshold
\begin{equation}\label{eq:threshold}
    h=h(F)\triangleq \frac{k}{k+1}.
\end{equation}

In this regime, the RQ solution exhibits three distinct scalings, which are determined by the threshold $h$: (i) \textbf{Underloaded}: if $c<0$ and $\gamma<h$, abandonment becomes asymptotically negligible and $Z_{\mathrm{RQ}_1}$ scales as $(1-\rho)^{-1}$, consistent with the expected steady-state workload of a $GI/GI/1$ queue without abandonment. (ii) \textbf{Critically loaded:} if $\gamma\ge h$, then abandonment enters the solution and $Z_{\mathrm{RQ}_1}$ scales as $\alpha^{-h}$.
This corresponds to the case where refined diffusion models are required, e.g. ROU process \cite{ward2005diffusion} when $F'(0) \neq 0$ and hazard rate scaling \cite{reed2008approximating} when $F'(0) = 0$. (iii) \textbf{Overloaded:} if $c>0$ and $\gamma<h$, then $Z_{\mathrm{RQ}_1}$ grows faster, on the scale $\alpha^{-(1-\gamma/k)}$.
The case with $\gamma = 0$ corresponds to the overloaded queue studied in \cite{jennings2012overloaded}.

These scalings are summarized below.
A unified proof of the first and refined RQ heavy-traffic limits is given in Section~\ref{sec:proof_RQ_HT}.

\begin{theorem}[Heavy-traffic limit for first RQ]\label{Thm:RQ_HT}
Consider the heavy-traffic family described above under Assumptions~\ref{assumption} and~\ref{assumption:F}.
Fix $\mu>0$ and $b>0$, and let $\lambda_\alpha=\rho_\alpha\mu$ with $\alpha^{-\gamma}(\rho_\alpha-1)\to c$ for some $\gamma>0$ and $c\in \R$.
Recall $c_x^2$ from Assumption~\ref{assumption} (equivalently, $c_x^2=I_w(\infty)<\infty$).
For each $\alpha>0$, let $Z_{\mathrm{RQ}_1,b}^\alpha$ denote the RQ solution of \eqref{eq:RQ_ab_1}.
\begin{enumerate}
\item \textbf{(Underloaded)} If $c<0$ and $\gamma<h$, then
\[
    \lim_{\alpha\downarrow 0} (1-\rho_{\alpha}) Z_{\mathrm{RQ}_1,b}^\alpha
    = \frac{1}{\mu}\cdot\frac{b^2}{2}\cdot\frac{c_x^2}{2},
    \qquad\text{equivalently} \qquad \lim_{\alpha\downarrow 0} (-c)\mu\alpha^\gamma Z_{\mathrm{RQ}_1,b}^\alpha
    =\frac{b^2}{2}\cdot\frac{c_x^2}{2}.
\]
Moreover,
\[
    \lim_{\alpha\downarrow 0}\frac{Z_{\mathrm{RQ}_1,b}^\alpha}{\E[Z_{M/M/1}]} =\frac{b^2}{2}\cdot\frac{c_x^2}{2},
    \qquad \E[Z_{M/M/1}] \triangleq \frac{\rho_{\alpha}}{\mu(1-\rho_{\alpha})},
\]
where $\E[Z_{M/M/1}]$ is the mean steady-state workload of an $M/M/1$ queue with arrival rate $\lambda_\alpha$ and service rate $\mu$ (without abandonment).

\item \textbf{(Critically loaded)} If $\gamma\ge h$, then there exists a finite constant $\hat Z_{\mathrm{RQ}_1,b}>0$ such that
\[
    \lim_{\alpha\downarrow 0}\alpha^{h}Z_{\mathrm{RQ}_1,b}^\alpha=\hat Z_{\mathrm{RQ}_1,b}.
\]
Furthermore, $\hat Z_{\mathrm{RQ}_1,b}$ is the unique positive root of
\begin{equation}\label{eq:RQ_HT_equation}
    -\mathds{1}\{\gamma=h\} c \hat Z_{\mathrm{RQ}_1,b} + \beta \hat Z_{\mathrm{RQ}_1,b}^{k+1} =\frac{c_x^2 b^2}{4\mu}.
\end{equation}

\item \textbf{(Overloaded)} If $c>0$ and $\gamma<h$, then
\[
    \lim_{\alpha\downarrow 0}\alpha^{1-\gamma/k}Z_{\mathrm{RQ}_1,b}^\alpha =\left(\frac{c}{\beta}\right)^{1/k}.
\]
In particular, the leading-order limit is independent of $b$ and of the work-variability parameter $c_x^2$, and depends on the patience-time distribution only through $F^{(k)}(0)$.
Equivalently,
\[
    \lim_{\alpha\downarrow 0}\frac{\rho_{\alpha}}{\rho_{\alpha}-1} F\bigl(\alpha Z_{\mathrm{RQ}_1,b}^\alpha\bigr)=1.
\]
\end{enumerate}
\end{theorem}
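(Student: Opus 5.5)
Write $z=Z^\alpha_{\mathrm{RQ}_1,b}$, $p=\bar F_\alpha(z)=\bar F(\alpha z)$, and let $\theta=1/p-\rho_\alpha>0$ be the linear coefficient in $\Psi$; finiteness forces $\theta>0$ by Remark~\ref{rem:RQ1_existence}. The plan is to reduce \eqref{eq:RQ_ab_1} to an asymptotically explicit scalar equation for $z$ and then read off the three regimes. The natural reference point is the explicit value
\[
    \Psi_\infty(\theta)\triangleq\sup_{u\ge0}\Bigl\{-\theta u+b\sqrt{c_x^2u/\mu}\Bigr\}=\frac{b^2c_x^2}{4\mu\theta},
\]
obtained by replacing $\rho_\alpha I_w(\lambda_\alpha u)$ by its limit $c_x^2$.

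\textbf{Step 1 (replacing $I_w$ by $c_x^2$).} We show $\Psi(z)=\Psi_\infty(\theta)(1+o(1))$ whenever $\theta\to0$. The unconstrained maximizer of $\Psi_\infty$ is $u^*=b^2c_x^2/(4\mu\theta^2)\to\infty$. By boundedness of $I_w$, the portion of the supremum over $u\le\varepsilon u^*$ contributes at most $b\sqrt{\|I_w\|_\infty\varepsilon u^*/\mu}=O(\sqrt\varepsilon)\Psi_\infty(\theta)$. On $u\ge\varepsilon u^*$ we have $\lambda_\alpha u\to\infty$, so $I_w(\lambda_\alpha u)\to c_x^2$ uniformly there, and $\rho_\alpha\to1$. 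Sending $\alpha\downarrow0$ and then $\varepsilon\downarrow0$ gives the claim; uniform convergence of $I_w$ on $[T,\infty)$ follows from its convergence at infinity. The fixed point therefore satisfies
\[
    z\,\theta=\frac{b^2c_x^2}{4\mu}(1+o(1)),\qquad \theta=\frac{F(\alpha z)}{\bar F(\alpha z)}-(\rho_\alpha-1),
\]
once we know $\theta\to0$.

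\textbf{Step 2 (a priori scaling).} We need $\alpha z\to0$ and $\theta\to0$. Monotonicity of $z\mapsto\Psi(z)-z$ lets us sandwich the fixed point: for any candidate $z_\pm$, the fixed point lies between $z_-$ and $z_+$ whenever $\Psi(z_-)-z_->0>\Psi(z_+)-z_+$. We choose $z_\pm=(1\pm\varepsilon)$ times the predicted limit from each regime, so the a priori bounds and the limits are obtained simultaneously. Using $F(x)=\beta x^k(1+o(1))$ from Assumption~\ref{assumption:F}, we get $\theta=\beta(\alpha z)^k-(\rho_\alpha-1)+o\bigl((\alpha z)^k\bigr)$. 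For $c\le0$ this requires only $\rho_\alpha$ eventually below $1+$ a vanishing term. For the overloaded case the candidate is chosen so that $\theta$ is small, which is consistent.

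\textbf{Step 3 (the three regimes).} Write $z=\alpha^{-\eta}x$ and use $\rho_\alpha-1\approx c\alpha^\gamma$. The equation from Step 1 becomes
\[
    x\bigl(\beta\alpha^{k-(k+1)\eta}x^k-c\alpha^{\gamma-\eta}\bigr)\approx\frac{b^2c_x^2}{4\mu}.
\]
\emph{Critical case} ($\gamma\ge h$): take $\eta=h$. Then $k-(k+1)h=0$ and $\gamma-h\ge0$, which yields \eqref{eq:RQ_HT_equation}. Its left side is strictly increasing from $0$ to $\infty$ on $(0,\infty)$ when $c\le0$. When $c>0$ and $\gamma=h$, the root is still unique on the region $\theta>0$, i.e.\ $\beta x^k>c$, where the left side is increasing.
\emph{Underloaded case} ($c<0$, $\gamma<h$): take $\eta=\gamma$. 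The $\beta$ term has exponent $k-(k+1)\gamma>0$, so it vanishes, leaving $(-c)x=b^2c_x^2/(4\mu)$. Dividing by $\E[Z_{M/M/1}]\sim1/(\mu(1-\rho_\alpha))$ gives the stated ratio.
\emph{Overloaded case} ($c>0$, $\gamma<h$): take $\eta=1-\gamma/k$. Now $\theta\to0$ requires $\beta(\alpha z)^k\approx\rho_\alpha-1$, giving $x\to(c/\beta)^{1/k}$. Here $z\theta\to b^2c_x^2/(4\mu)$ only fixes the lower-order gap $\theta\sim\alpha^{1-\gamma/k}$, which is $o(\alpha^\gamma)$ precisely because $\gamma<h$. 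The equivalent form $\rho_\alpha F(\alpha z)/(\rho_\alpha-1)\to1$ is a restatement of $\theta=o(\rho_\alpha-1)$.

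\textbf{Main obstacle.} The delicate part is the overloaded case. There the sandwich in Step 2 must be done at the level of the small gap $\theta$, not of $z$, because errors of relative size $o(1)$ in $(\alpha z)^k$ are comparable to $\rho_\alpha-1$ and could make $\theta$ negative. I would handle this by parametrizing the candidate through $\theta$ directly: set $\theta_\pm=(1\pm\varepsilon)b^2c_x^2/(4\mu z_0)$ with $z_0=\alpha^{-(1-\gamma/k)}(c/\beta)^{1/k}$, and solve for $z_\pm$ by inverting $\bar F$, which is monotone. Step 1 then applies because $\theta_\pm\to0$.
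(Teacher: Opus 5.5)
Your argument is correct, but it takes a different route from the paper.

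\textbf{The paper's route.} It writes both RQ fixed points in a common scaled form and proves a priori boundedness of the scaled solutions, by contradiction in the critical case. It then extracts convergent subsequences and passes to the limit using compact-uniform convergence and the localized-suprema lemma. It closes with uniqueness of the limiting fixed point. In the overloaded regime it uses a separate coefficient-squeeze lemma.

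\textbf{Your route.} You use a feature specific to the first RQ. After the change of variables $u=\bar F_\alpha(z)s$, $\Psi$ depends on $z$ only through $\theta=1/\bar F(\alpha z)-\rho_\alpha$. For small $\theta$ the optimizer sits at $u\asymp\theta^{-2}\to\infty$, where $I_w$ has already converged. Hence $\theta\Psi\to b^2c_x^2/(4\mu)$ as $\theta\downarrow0$; this is the paper's IDW and localized-suprema lemmas applied after the substitution $u=v/\theta^2$. Combined with the strict monotonicity of $\Psi(z)-z$ from Remark~\ref{rem:RQ1_existence}, each regime reduces to a sign check at two explicit candidates. There is no compactness or subsequence step.

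\textbf{What each buys.} Your argument is shorter. It also yields extra information in the overloaded regime: $z\theta\to b^2c_x^2/(4\mu)$, so the gap $\theta$ is of order $\alpha^{1-\gamma/k}$. The price is generality. In the refined RQ the factor $w_{\tilde c_\alpha,k}(\alpha^{2h}\tau s)$ varies on exactly the critical optimizer scale. So $\Psi$ is no longer asymptotically $b^2c_x^2/(4\mu\theta)$, and the limit is the implicit equation \eqref{eq:RQ_HT_equation_general}. The paper's scaled-objective argument is what lets it prove both theorems at once.

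\textbf{The overloaded ``main obstacle'' is not an obstacle.} The naive candidates $z_\pm=(1\pm\varepsilon)\alpha^{-(1-\gamma/k)}(c/\beta)^{1/k}$ already work:
\begin{itemize}
\item At $z_-$ one has $\theta(z_-)<0$ eventually. So $\Psi(z_-)=+\infty>z_-$, which is exactly the inequality the sandwich needs.
\item At $z_+$ one has $\theta(z_+)\sim c\alpha^\gamma\bigl((1+\varepsilon)^k-1\bigr)$. This gives $\Psi(z_+)=O(\alpha^{-\gamma})=o(z_+)$, because $\gamma<h$.
\end{itemize}
Your $\theta$-parametrized candidates also work, but they are only needed for the second-order statement about $\theta$.

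\textbf{A small point on uniqueness.} In the critical case with $\gamma=h$ and $c>0$, the left side of \eqref{eq:RQ_HT_equation} is nonpositive on $(0,(c/\beta)^{1/k}]$. So the root is unique among all positive reals, not just on $\{\beta x^k>c\}$.
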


Part~(1) of Theorem~\ref{Thm:RQ_HT} identifies the parameter range in which patience times are asymptotically long relative to the load gap, so that abandonment becomes negligible and the system behaves as a single-server queue without abandonment.
Part~(2) of Theorem~\ref{Thm:RQ_HT} characterizes the regime in which the patience-time distribution influences the heavy-traffic scaling.
In the canonical Markovian case $M/M/1{+}M$, one has $k=1$ (so $h=1/2$) and $F'(0)\neq 0$.
Diffusion limits for queues with abandonment were established for the Markovian model in \citet{ward2003diffusion} and generalized to $GI/GI/1{+}GI$ in \citet{ward2005diffusion}.
We restate the relevant result below.
Let $Z^\alpha(\cdot)$ denote the (steady-state) virtual waiting time process under abandonment scaling parameter $\alpha$, and define the diffusion-scaled process $\tilde Z^\alpha(t)\triangleq \alpha^{1/2} Z^\alpha(\alpha^{-1}t)$.

\begin{proposition}[Theorem 1, \citet{ward2005diffusion}]\label{Prop:WG05}
Suppose $\alpha^{-1/2}(\rho_{\alpha}-1)\to c$ for some finite constant $c$, and assume $\tilde Z^\alpha(0)\Rightarrow \tilde Z(0)$ as $\alpha\downarrow 0$.
Then $\tilde Z^\alpha \Rightarrow \tilde Z$ as $\alpha\downarrow 0$, where $\tilde Z$ is a ROU process with drift $c-F'(0)z$ and infinitesimal variance $c_x^2/\mu$ with $c_x^2=c_a^2+c_s^2$.
If $F'(0)>0$, the ROU process has a unique stationary distribution, which is the law of a normal random variable truncated to $[0,\infty)$
\[
    \tilde Z(\infty)\ \stackrel{d}{=}\ \mathcal{N}\left(\frac{c}{F'(0)}, \frac{c_x^2}{2\mu F'(0)}\right)\ \bigg|\ \left\{\mathcal{N}\left(\frac{c}{F'(0)}, \frac{c_x^2}{2\mu F'(0)}\right)\ge 0\right\},
\]
where $\mathcal{N}(\mu,\sigma^2)$ is a normal random variable.
\end{proposition}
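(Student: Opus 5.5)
Since this is a restatement of \citet[Theorem~1]{ward2005diffusion}, we do not re-derive the full weak-convergence argument. Instead we indicate how it follows from the reverse-time representation \eqref{eq:reverse_ab} and verify the stationary law directly.

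\textbf{Step 1: decompose the effective input.} For the process-level statement, we split $N(t)=Y(t)-t$ into three pieces:
\[
\sum_{i\le A(t)}V_i-t
\;-\;\sum_{i\le A(t)}V_i\mathds{1}\{D_i\le W_i\}
\;+\;\text{(martingale/renewal error)}.
\]
The first piece, under the scaling $\alpha^{1/2}(\cdot)(\alpha^{-1}t)$, converges by the renewal-reward FCLT to $ct+\sqrt{c_x^2/\mu}\,B(t)$. This uses $\alpha^{-1/2}(\rho_\alpha-1)\to c$ and the fact that the variance per unit time of the total-input process is $\lambda(c_a^2+c_s^2)/\mu^2\to c_x^2/\mu$ when $\rho_\alpha\to1$.

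\textbf{Step 2: linearize the abandonment term.} Mimicking Lemma~\ref{lm:drift_poisson} and Lemma~\ref{lm:correction}, the abandoned work is approximated by its compensator $\rho_\alpha\int_0^t F(\alpha Z(u-))\,du$, up to a term that is negligible on the diffusion scale. Since $Z=O(\alpha^{-1/2})$, we have $F(\alpha Z)=F'(0)\alpha Z+o(\alpha^{1/2})$. After time scaling by $\alpha^{-1}$ and space scaling by $\alpha^{1/2}$, the compensator becomes $F'(0)\int_0^t\tilde Z^\alpha(u)\,du+o(1)$. Hence $\tilde Z^\alpha$ satisfies
\[
\tilde Z^\alpha(t)=\tilde Z^\alpha(0)+X^\alpha(t)-F'(0)\int_0^t\tilde Z^\alpha(u)\,du+L^\alpha(t)+\epsilon^\alpha(t),
\]
where $L^\alpha$ is the regulator at $0$ and $\epsilon^\alpha\to0$.

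\textbf{Step 3: continuous mapping.} The map $(x_0,x)\mapsto z$ solving $z=x_0+x-F'(0)\int z+L$ with reflection at $0$ is Lipschitz continuous in the uniform topology on compacts, by a Gronwall argument combined with the Lipschitz property of the Skorokhod map. Joint convergence of $(\tilde Z^\alpha(0),X^\alpha)$ then gives $\tilde Z^\alpha\Rightarrow\tilde Z$, the ROU with drift $c-F'(0)z$ and infinitesimal variance $c_x^2/\mu$.

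\textbf{Main obstacle.} The hardest step is controlling the compensator error $\epsilon^\alpha$. This requires tightness of $\alpha^{1/2}\sup_{u\le \alpha^{-1}T}Z^\alpha(u)$ together with uniform negligibility of the renewal (non-martingale) correction, i.e.\ the $d(A-\lambda u)$ term. We defer this part to \cite{ward2005diffusion}.

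\textbf{Step 4: stationary law.} For the stationary distribution, we use the standard one-dimensional reflected-diffusion formula. With drift $m(z)=c-F'(0)z$ and variance $\sigma^2=c_x^2/\mu$, the invariant density on $[0,\infty)$ is proportional to
\[
\exp\left(\frac{2}{\sigma^2}\int_0^z m(y)\,dy\right)
=\exp\left(\frac{2\mu}{c_x^2}\left(cz-\tfrac{F'(0)}{2}z^2\right)\right).
\]
Completing the square shows this is proportional to the $\mathcal N\bigl(c/F'(0),\,c_x^2/(2\mu F'(0))\bigr)$ density restricted to $[0,\infty)$. When $F'(0)>0$ the density is integrable, so the process is positive recurrent with a unique stationary law. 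That law is the stated truncated normal, and this can be confirmed by checking that the reflected generator's adjoint equation, with zero-flux boundary condition at $0$, is satisfied.
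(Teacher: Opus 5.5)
Your outline is correct and matches how the paper treats this result. The paper simply cites \citet{ward2005diffusion}, and its own proof of the general version, Theorem~\ref{thm:HT_limit} (which reduces to this proposition at $k=1$), uses exactly your thinning decomposition, compensator linearization, and Gronwall--Skorokhod continuous-mapping step; your speed-measure computation reproduces the $k=1$ case of \eqref{eq:stationary_dist}.

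The step you defer is in fact routine, and you could close it yourself. Compact containment follows from the pathwise bound $0\le\tilde Z^\alpha\le\tilde Z_{\mathrm{tot}}^\alpha$, where $\tilde Z_{\mathrm{tot}}^\alpha$ is the workload of the queue that accepts every arrival (Lemma~\ref{lem:HT_input_reduction}). After scaling, the renewal correction is $\int_0^\cdot\alpha^{-1/2}F(\alpha^{1/2}\tilde Z^\alpha(s-))\,d\bigl(\bar A^\alpha(s)-\lambda_\alpha s\bigr)$: an $O(1)$ integrand integrated against a fluid-scale error that vanishes by the FLLN (Lemma~\ref{lem:HT_abandonment_compact}). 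So you need neither a martingale property of $A-\lambda e$ nor the mean-only bound of Lemma~\ref{lm:correction}.
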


When $F'(0)>0$, the stationary distribution above is a truncated normal with mean
\begin{equation}\label{eq:ROU_expectation}
    \E[\tilde Z(\infty)]
    = \frac{c}{F'(0)} + \frac{\phi\left(-\frac{c}{F'(0)\sigma}\right)}{1-\Phi\left(-\frac{c}{F'(0)\sigma}\right)} \sigma,
    \qquad \sigma^2\triangleq \frac{c_x^2}{2\mu F'(0)},
\end{equation}
where $\phi$ and $\Phi$ are the standard normal density and distribution functions.
If $F'(0)=0$, the diffusion approximation in Proposition~\ref{Prop:WG05} degenerates to a reflected Brownian motion, where the patience-time distribution vanishes from the limit.
This fails to reveal the subtle scaling of the heavy-traffic limit when the system load is heavier than that in the canonical $\alpha^{1/2}$ scaling; see Theorem~\ref{Thm:HT_exact}.

The exact stationary mean formula for the $M/M/1{+}GI$ model is the single-server specialization of \citet[Eq.~(9.9)]{zeltyn2005call}.
The next theorem shows that the exact mean offered waiting time exhibits the same three scaling regimes as the RQ solution.

\begin{theorem}[Heavy-traffic limit for $M/M/1{+}GI$]\label{Thm:HT_exact}
Under Assumption \ref{assumption:F}, let $\E[Z_{\alpha}]$ be the mean stationary virtual waiting time in the $M/M/1{+}GI$ model with service rate $\mu$ and arrival rate $\lambda_{\alpha}=\rho_{\alpha}\mu$ such that $\alpha^{-\gamma}(\rho_{\alpha}-1)\to c$ for some $\gamma>0$ and $c\in \R$.
Let $h$ be defined by \eqref{eq:threshold}.
\begin{enumerate}
\item \textbf{(Underloaded)} If $c<0$ and $\gamma<h$, then
\[
    \lim_{\alpha\downarrow 0} (-c)\mu\alpha^\gamma \E[Z_{\alpha}]
    = \lim_{\alpha\downarrow 0}\frac{\E[Z_{\alpha}]}{\E[Z_{M/M/1}]} =1,
\]
where $\E[Z_{M/M/1}]=\rho_{\alpha}/(\mu(1-\rho_{\alpha}))$.

\item \textbf{(Critically loaded)} If $\gamma\ge h$, then
\begin{equation}\label{eq:HT_exact}
    \lim_{\alpha\downarrow 0}\alpha^{h}\E[Z_{\alpha}]
    = \frac{\int_0^{\infty} x\exp\left\{c\mu x \mathds{1}\{\gamma = h\} - \frac{\mu\beta}{k+1} x^{k+1}\right\}dx} {\int_0^{\infty} \exp\left\{c\mu x \mathds{1}\{\gamma = h\} - \frac{\mu\beta}{k+1} x^{k+1}\right\}dx}
    \triangleq z_{\mathrm{HT}}.
\end{equation}
In particular, $z_{\mathrm{HT}}=z_{\mathrm{HT}}(c;k,\beta,\mu)$ depends on the patience distribution only through $k$ and $\beta$.

\item \textbf{(Overloaded)} If $c>0$ and $\gamma<h$, then
\[
    \lim_{\alpha\downarrow 0}\alpha^{1-\gamma/k}\E[Z_{\alpha}] =\left(\frac{c}{\beta}\right)^{1/k}.
\]
\end{enumerate}
\end{theorem}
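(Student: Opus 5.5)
The plan is to start from the exact stationary density of the offered waiting time in the $M/M/1{+}GI$ model, the single-server case of \citet[Eq.~(9.9)]{zeltyn2005call}. The stationary virtual waiting time has an atom at $0$ and, on $(0,\infty)$, a density proportional to
\[
    \exp\Bigl\{-\mu x+\lambda_\alpha\int_0^x \bar F_\alpha(y)\,dy\Bigr\}
    =\exp\Bigl\{(\lambda_\alpha-\mu)x-\lambda_\alpha\int_0^x F(\alpha y)\,dy\Bigr\}.
\]
The atom is $\pi_0=\bigl(1+\lambda_\alpha\int_0^\infty \exp\{\cdots\}dx\bigr)^{-1}$, and the density is $\pi_0\lambda_\alpha\exp\{\cdots\}$. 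Hence
\[
    \E[Z_\alpha]=\frac{\lambda_\alpha J_1}{1+\lambda_\alpha J_0},
    \qquad
    J_j\triangleq\int_0^\infty x^j e^{\phi_\alpha(x)}\,dx,
    \qquad
    \phi_\alpha(x)=\mu(\rho_\alpha-1)x-\lambda_\alpha\int_0^x F(\alpha y)\,dy .
\]
The whole proof reduces to Laplace-type asymptotics of $J_0$ and $J_1$ under the three scalings. By Assumption~\ref{assumption:F}, $\int_0^x F(\alpha y)\,dy=\beta\alpha^k x^{k+1}/(k+1)\,(1+o(1))$ uniformly for $\alpha x\to0$. We also have the global lower bound $\int_0^x F(\alpha y)\,dy\ge$ const$\cdot\alpha^{k}x^{k+1}\wedge$ const$\cdot(x-1/\alpha)$, which holds because $F$ is positive on $(0,\infty)$ with $F(x)\ge \beta' x^k$ on $[0,1]$ and $F\ge F(1)$ beyond. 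This bound supplies dominating functions.

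\textbf{Critical case.} Substitute $x=\alpha^{-h}y$. Since $\alpha^{k}\alpha^{-h(k+1)}=\alpha^{-h}$, the exponent becomes
\[
    \alpha^{-h}\Bigl[\mu\alpha^{\gamma}\alpha^{-\gamma}(\rho_\alpha-1)\,\alpha^{h-\gamma}\cdot\alpha^{\gamma-h}y-\mu\beta y^{k+1}/(k+1)\Bigr].
\]
A careful count gives $\mu(\rho_\alpha-1)\alpha^{-h}y\approx c\mu\alpha^{\gamma-h}y$. This is the wrong normalization, so instead I will scale as $x=\alpha^{-h}y$ with the prefactor removed. Note $h(k+1)-k=h$, so $\phi_\alpha(\alpha^{-h}y)=c\mu\alpha^{\gamma-h}y-\frac{\mu\beta}{k+1}\alpha^{k-h(k+1)}y^{k+1}(1+o(1))$. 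I need $k-h(k+1)=0$. Since $h=k/(k+1)$, this holds, so the exponent converges to $c\mu y\mathds 1\{\gamma=h\}-\frac{\mu\beta}{k+1}y^{k+1}$, and the linear term vanishes when $\gamma>h$. Then $J_j=\alpha^{-h(j+1)}(\int y^j e^{\psi}+o(1))$ by dominated convergence using the tail bound above. Since $\lambda_\alpha J_0\to\infty$, the ratio gives $\alpha^h\E[Z_\alpha]\to z_{\mathrm{HT}}$.

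\textbf{Underloaded case} ($c<0$, $\gamma<h$). Scale $x=\alpha^{-\gamma}y$. The linear term gives $c\mu y$. The abandonment term is of order $\alpha^{k-\gamma(k+1)}y^{k+1}\to0$ because $\gamma<h$, and the tail bound controls large $y$. Thus $J_j\sim\alpha^{-\gamma(j+1)}\int y^je^{c\mu y}dy$, and the ratio gives $\alpha^\gamma\E[Z_\alpha]\to 1/(-c\mu)$. This matches $\E[Z_{M/M/1}]\sim 1/(\mu(1-\rho_\alpha))$.

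\textbf{Overloaded case} ($c>0$, $\gamma<h$). Here $\phi_\alpha$ has an interior maximum at $x^*$ solving $\rho_\alpha-1=\rho_\alpha F(\alpha x^*)$. Therefore $\alpha x^*\sim(c\alpha^\gamma/\beta)^{1/k}$, i.e. $x^*\sim\alpha^{-(1-\gamma/k)}(c/\beta)^{1/k}$. The curvature is $|\phi''(x^*)|=\lambda_\alpha\alpha f(\alpha x^*)\asymp \alpha\cdot(\alpha x^*)^{k-1}$. The Laplace width is therefore $\alpha^{-1/2}(\alpha x^*)^{-(k-1)/2}$. The ratio of the width to $x^*$ tends to zero exactly when $\gamma<h$ (check: width$/x^*\asymp \alpha^{-1/2-(k-1)\gamma/(2k)+1-\gamma/k}=\alpha^{(1-\gamma(k+1)/k)/2}\to0$). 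Thus Laplace's method gives $J_1/J_0\to x^*(1+o(1))$ and $\lambda_\alpha J_0\to\infty$, which yields the limit. For $k=1$ I will use the local expansion of $F$; in general I will use $f(x)\sim k\beta x^{k-1}$.

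\textbf{Main obstacle.} The hard part is the uniform tail domination, i.e. justifying the Laplace concentration when $\alpha x$ is not small. I handle it by splitting the integral at $x=\epsilon/\alpha$. Beyond that point, $\phi_\alpha(x)\le \phi_\alpha(\epsilon/\alpha)-(\lambda_\alpha F(\epsilon)-\mu(\rho_\alpha-1)^+)(x-\epsilon/\alpha)$. This contribution is exponentially negligible relative to the main term, since $\phi_\alpha(\epsilon/\alpha)\to-\infty$ on the scale $\alpha^{-1}$, which dominates all polynomial prefactors.
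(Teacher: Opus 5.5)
Your proposal is correct and follows the paper's proof. Both use the exact ratio $\E[Z_\alpha]=\lambda_\alpha J_1/(1+\lambda_\alpha J_0)$ and spatial rescaling by $\alpha^{-\gamma}$ (the paper uses $1/(\mu(1-\rho_\alpha))$), $\alpha^{-h}$, and $\alpha^{-(1-\gamma/k)}$. In the first two regimes both use dominated convergence with an envelope $e^{Cy-c_0y^{k+1}}$ plus an exponentially small tail beyond $x=\epsilon/\alpha$. In the overloaded regime both use Laplace concentration at $x^*$. The paper writes the scaled exponent as $r_\alpha G_\alpha$ with $r_\alpha=\alpha^{\gamma/h-1}\to\infty$, which is exactly your width-to-$x^*$ computation and avoids needing $f(u)\sim k\beta u^{k-1}$.

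Two slips to fix. First, $h(k+1)-k=0$, not $h$. Second, the global bound should read $\int_0^xF(\alpha y)\,dy\ge c_1\alpha^k(x\wedge\alpha^{-1})^{k+1}+F(1)(x-\alpha^{-1})^+$. A minimum with $c_2(x-1/\alpha)<0$ is vacuous for $x<1/\alpha$.
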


\subsection{Calibration of the parameter \texorpdfstring{$b$}{b}}\label{sec:calibrate_first_RQ}

We now discuss calibration of the robustness parameter $b$.
We follow a procedure similar to \citet{whitt2019time}: we select $b$ by matching the heavy-traffic limits of the RQ approximation in Theorem~\ref{Thm:RQ_HT} with the corresponding heavy-traffic limits of the original system in Theorem~\ref{Thm:HT_exact}, specialized to the $M/M/1{+}GI$ model.
Moreover, Part~(3) of Theorem~\ref{Thm:RQ_HT} shows that the steady-state RQ solution matches the exact leading-order constant in the overloaded heavy-traffic regime, irrespective of the choice of $b$.
Consequently, we work in the critically-loaded scaling $\gamma=h$ with $c \triangleq \alpha^{-h}(\rho-1)$.

Throughout this subsection we set $\mu=1$.
For an $M/M/1$ input, the long-run index of dispersion for work satisfies $c_x^2=I_w(\infty)=2$.
Matching the limiting constants in \eqref{eq:RQ_HT_equation} and \eqref{eq:HT_exact} by setting $\hat Z_{\mathrm{RQ}_1,b}=z_{\mathrm{HT}}$ therefore yields
\begin{equation}\label{eq:b}
    b(c) \triangleq \sqrt{2\left|-c z_{\mathrm{HT}}+\beta z_{\mathrm{HT}}^{k+1}\right|},
\end{equation}
where $z_{\mathrm{HT}}$ is the constant defined in \eqref{eq:HT_exact} with $\gamma=h$ and $\mu=1$.

The next lemma shows that this calibration automatically recovers the classical underloaded calibration $b=\sqrt{2}$ as a limiting case; see \cite{whitt2018using}.

\begin{lemma}\label{lm:b_to_sqrt2}
For any $k\ge 1$ and $F^{(k)}(0)>0$, we have $\lim_{c\to -\infty} b(c) = \sqrt{2}$.
\end{lemma}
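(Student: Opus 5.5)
The plan is to compute the asymptotics of $z_{\mathrm{HT}}$ directly from its ratio-of-integrals definition in \eqref{eq:HT_exact} (with $\gamma=h$ and $\mu=1$) and then substitute them into \eqref{eq:b}. Write $a\triangleq -c$, so $a\to\infty$ as $c\to-\infty$, and set $\psi(x)\triangleq \beta x^{k+1}/(k+1)$. Then
\[
    z_{\mathrm{HT}}(c)=\frac{\int_0^\infty x\,e^{-ax-\psi(x)}\,dx}{\int_0^\infty e^{-ax-\psi(x)}\,dx}.
\]
The natural scale is $x=y/a$. The linear term $-ax$ dominates on that scale, so the patience term $\psi$ should become negligible. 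The constant $\beta>0$ comes from Assumption~\ref{assumption:F}.

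First, I change variables $x=y/a$ in both integrals. This gives
\[
    a\,z_{\mathrm{HT}}(c)=\frac{\int_0^\infty y\,e^{-y-\psi(y/a)}\,dy}{\int_0^\infty e^{-y-\psi(y/a)}\,dy}.
\]
Since $\psi\ge0$, the integrands are bounded by $y e^{-y}$ and $e^{-y}$, both of which are integrable. Also $\psi(y/a)\to0$ pointwise as $a\to\infty$. Dominated convergence then gives numerator $\to\int_0^\infty y e^{-y}dy=1$ and denominator $\to1$. Hence
\[
    -c\,z_{\mathrm{HT}}(c)=a\,z_{\mathrm{HT}}(c)\to1 .
\]
In particular $z_{\mathrm{HT}}(c)=O(1/a)$.

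Second, the higher-order term satisfies
\[
    \beta z_{\mathrm{HT}}^{k+1}=O(a^{-(k+1)})\to0,
\]
because $k\ge1$. Therefore $-c z_{\mathrm{HT}}+\beta z_{\mathrm{HT}}^{k+1}\to1$. This quantity is positive for all sufficiently negative $c$ (in fact for every $c<0$, since both terms are positive), so the absolute value in \eqref{eq:b} is harmless. By continuity of the square root, $b(c)\to\sqrt{2\cdot1}=\sqrt2$.

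I do not expect any genuine obstacle. The only point needing care is justifying the interchange of limit and integral, and the domination by $e^{-y}$ and $ye^{-y}$ handles it uniformly in $a$. One could alternatively use a Laplace-type expansion to obtain the rate $a z_{\mathrm{HT}}=1+O(a^{-(k+1)})$, but only the limit is needed for the lemma.
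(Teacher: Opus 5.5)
Your proof is correct and follows the paper's route: show $-c\,z_{\mathrm{HT}}\to1$ and $z_{\mathrm{HT}}^{k+1}\to0$, then substitute into \eqref{eq:b}. The paper only asserts that the integrals are dominated by a neighborhood of the origin. Your rescaling $x=y/(-c)$, with the dominating functions $e^{-y}$ and $ye^{-y}$, makes that concentration step rigorous.
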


\begin{proof}
As $c\to -\infty$, the integrals defining $z_{\mathrm{HT}}$ in \eqref{eq:HT_exact} are dominated by a neighborhood of $0$, and one obtains $z_{\mathrm{HT}}\sim -1/c$, so that $-cz_{\mathrm{HT}}\to 1$ and $z_{\mathrm{HT}}^{k+1}=o(1)$.
Substituting into \eqref{eq:b} yields $b(c)\to \sqrt{2}$.
\end{proof}

\begin{remark}[Universal calibration across all heavy-traffic regimes]
Recall that $c\to -\infty$ corresponds to the underloaded long-patience regime (part~(1) of Theorem~\ref{Thm:RQ_HT} and Theorem~\ref{Thm:HT_exact}), because if $\gamma<h$ then $\alpha^{-h}(\rho-1)\to -\infty$ as $\alpha\downarrow 0$.
Lemma~\ref{lm:b_to_sqrt2} therefore implies that the critically-loaded calibration \eqref{eq:b} subsumes the underloaded case as a special limit.
Moreover, in the overloaded regime the leading-order scaling is asymptotically insensitive to $b$ (Theorem~\ref{Thm:RQ_HT}(3)).
In summary, \eqref{eq:b} provides a single calibration rule that is consistent across all three heavy-traffic regimes.
\end{remark}

\begin{remark}[Closed form for the $M/M/1{+}M$ model]
For the canonical $M/M/1{+}M$ model with $\mu=1$, one has $k=1$, $h=1/2$, and $F'(0)=1$.
Writing $c=\alpha^{-1/2}(\rho_{\alpha}-1)$, the constant $z_{\mathrm{HT}}$ in \eqref{eq:HT_exact} equals the mean of a truncated normal distribution (see Proposition~\ref{Prop:WG05}), namely
\[
    z_{\mathrm{HT}} = c + \frac{\phi(-c)}{1-\Phi(-c)}.
\]
Substituting this value into \eqref{eq:b} yields the explicit calibration
\[
    b(c) = \sqrt{ 2\left(c + \frac{\phi(-c)}{1-\Phi(-c)}\right)\frac{\phi(-c)}{1-\Phi(-c)} }.
\]
\end{remark}

\section{A Refined Robust Queueing Algorithm}\label{sec:var}

The first RQ algorithm in Section~\ref{sec:RQ_ab1} uses the variance surrogate \eqref{eq:approx_var_RQ1}, which is motivated by the heuristic that the dependence between the thinning rule $\mathds{1}\{D_i>W_i\}$ and the arrival process is negligible.
In this section, we develop a refined approximation for the variance function of the effective net-input process based on a heavy-traffic limit.
The resulting limit suggests a more accurate structure for the variability term in the RQ formulation, and it serves as the basis for a refined RQ algorithm.

\subsection{Heavy-Traffic Limit for the Effective Net-Input Process}
Under the conventional diffusion scaling of Proposition~\ref{Prop:WG05}, the limit depends on the patience-time distribution only through $f(0)$ and degenerates to a reflected Brownian motion when $f(0)=0$.
Here we consider an alternative heavy-traffic regime under which the limiting diffusion has a \emph{nonlinear} drift determined by the first nonzero derivative of $F$ at the origin (Assumption~\ref{assumption:F}).
Unlike the hazard-rate scaling of \citet{reed2008approximating}, the limit retains only this local order of $F$; see Remark~\ref{rem:why_not_full_hazard}.

\paragraph{System sequence and scaling.}
Consider a sequence of $GI/GI/1{+}GI$ queues indexed by the patience scaling parameter $\alpha\downarrow 0$.
In the $\alpha$th system, the patience-time CDF is $F_\alpha(x)=F(\alpha x)$, where $F$ is a base CDF with $F(0)=0$ and finite mean, satisfying Assumption~\ref{assumption:F}.
Let $\lambda_{\alpha}$ and $\mu_{\alpha}$ denote the arrival and service rates, and write $\rho_{\alpha}\triangleq \lambda_{\alpha}/\mu_{\alpha}$.
Let $h=k/(k+1)$ be defined as in \eqref{eq:threshold}.
We assume $\mu_{\alpha}\to\mu$ and $\alpha^{-h}(\rho_{\alpha}-1)\to c$ as $\alpha\downarrow0$.
This is the critically-loaded regime with $\gamma=h$ in Theorem~\ref{Thm:RQ_HT}.
Recall $\beta$ from Assumption~\ref{assumption:F}.
Each system in the sequence satisfies the service-time assumptions with rate $\mu_{\alpha}$ and common service SCV $c_s^2$.
Hence $\Var(V_i^\alpha)=c_s^2/(\mu_{\alpha})^2$ and $\sup_\alpha \E[(V_i^\alpha)^2]<\infty$.

Let $A^\alpha(\cdot)$ be the arrival counting process, and let $Z^\alpha(\cdot)$ be the virtual waiting time process.
Let $S^\alpha$ denote the centered service-requirement partial-sum process indexed by customer count:
\[
    S^\alpha(t)\triangleq\sum_{i=1}^{\lfloor t\rfloor}\left(V_i^\alpha-\frac1{\mu_{\alpha}}\right),\qquad t\ge0.
\]
For customer $i$, let $T_i^\alpha$ be the arrival epoch and let $W_i^\alpha=Z^\alpha(T_i^\alpha-)$ be the offered waiting time.
Define the effective arrival and effective work-input processes as
\[
    A_0^\alpha(t)\triangleq \sum_{i=1}^{A^\alpha(t)} \mathds{1}\{D_i^\alpha>W_i^\alpha\},
    \qquad Y^\alpha(t)\triangleq \sum_{i=1}^{A^\alpha(t)} V_i^\alpha \mathds{1}\{D_i^\alpha>W_i^\alpha\}.
\]
We use the time scaling $t\mapsto\alpha^{-2h}t$ and space scaling $x\mapsto\alpha^hx$.
Define the fluid-scaled arrival process $\bar A^\alpha(t)\triangleq \alpha^{2h}A^\alpha(\alpha^{-2h}t)$ and the diffusion-scaled processes
\begin{align}
    \tilde A^\alpha(t) &\triangleq \alpha^h\left[A^\alpha(\alpha^{-2h}t)-\alpha^{-2h}\lambda_{\alpha} t\right],\notag\\
    \tilde S^\alpha(t) &\triangleq \alpha^h S^\alpha(\alpha^{-2h}t),\notag\\
    \tilde A_0^\alpha(t)
    &\triangleq \alpha^h\left[A_0^\alpha(\alpha^{-2h}t)-\alpha^{-2h}\lambda_{\alpha} t\right],\notag\\
    \tilde Y^\alpha(t)
    &\triangleq \alpha^h\left[Y^\alpha(\alpha^{-2h}t)-\alpha^{-2h}\rho_{\alpha} t\right],\label{eq:scaled_Y}\\
    \tilde Z^\alpha(t) &\triangleq \alpha^h Z^\alpha(\alpha^{-2h}t),\notag\\
    \tilde L^\alpha(t) &\triangleq \alpha^h L^\alpha(\alpha^{-2h}t),\notag
\end{align}
where $L^\alpha(\cdot)$ is the cumulative idle time in the identity
\[
    Z^\alpha(t)=Z^\alpha(0)+Y^\alpha(t)-t+L^\alpha(t),\qquad t\ge0.
\]
The prelimit regulator satisfies $Z^\alpha(t)\ge0$, $L^\alpha$ is nondecreasing, $L^\alpha(0)=0$, and $\int_0^\infty\mathds{1}\{Z^\alpha(t)>0\}dL^\alpha(t)=0$.

\paragraph{Heavy-traffic limit.}
Let $B_a$ and $B_s$ be independent standard Brownian motions and write $e(t)=t$ for the identity map.
Recall that $c_a^2$ and $c_s^2$ are the asymptotic variability parameters of the arrival and service primitives, and $c_x^2=c_a^2+c_s^2$.

\begin{theorem}\label{thm:HT_limit}
Assume the functional CLT
\begin{equation}\label{eq:FCLT_primitives}
    (\tilde A^\alpha,\tilde S^\alpha,\tilde Z^\alpha(0))
    \Rightarrow \left(c_a B_a\circ(\mu e),\mu^{-1}c_s B_s,Z^*(0)\right), \qquad \alpha\downarrow0.
\end{equation}
Assume that $Z^*(0)$ is independent of the future increments of $B_a$ and $B_s$.
Then
\[
    (\tilde Z^\alpha,\tilde L^\alpha,\tilde Y^\alpha,\tilde A_0^\alpha) \Rightarrow (Z^*,L^*,Y^*,A_0^*),
    \qquad \alpha\downarrow0,
\]
where $(Z^*,L^*)$ is the unique solution to the reflected integral equation
\begin{equation}\label{eq:HT_limit_Zstar}
    Z^*(t)=Z^*(0)+\mu^{-1}c_aB_a(\mu t)+\mu^{-1}c_sB_s(\mu t)-\beta\int_0^t\bigl(Z^*(s)\bigr)^kds+ct+L^*(t),
\end{equation}
with $Z^*(t)\ge0$, $L^*$ nondecreasing, $L^*(0)=0$, and $\int_0^\infty\mathds{1}\{Z^*(t)>0\}dL^*(t)=0$.
Moreover,
\begin{align*}
    A_0^*(t) &=c_aB_a(\mu t)-\mu\beta\int_0^t\bigl(Z^*(s)\bigr)^kds,\\
    Y^*(t) &=\frac1\mu A_0^*(t)+\frac1\mu c_sB_s(\mu t)
    = \mu^{-1}c_aB_a(\mu t)+\mu^{-1}c_sB_s(\mu t)-\beta\int_0^t\bigl(Z^*(s)\bigr)^kds.
\end{align*}
\end{theorem}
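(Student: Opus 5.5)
The plan is the standard continuous-mapping approach for queues with state-dependent thinning, in the spirit of \citet{ward2005diffusion} and \citet{lee2011convergence}. The idea is to write the scaled virtual waiting time as a reflected process driven by the primitives plus an abandonment drift, and to identify the limit of that drift.

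\textbf{Decomposition of the effective work input.} Write
\[
Y^\alpha(t)=\sum_{i\le A^\alpha(t)}V_i^\alpha-\sum_{i\le A^\alpha(t)}V_i^\alpha\mathds{1}\{D_i^\alpha\le W_i^\alpha\}.
\]
Split the second sum into a compensator part and a martingale part:
\[
\frac{1}{\mu_\alpha}\sum_{i\le A^\alpha(t)}F(\alpha W_i^\alpha) + \mathcal M^\alpha(t).
\]
Here $\mathcal M^\alpha$ collects the terms $V_i^\alpha\mathds{1}\{D_i^\alpha\le W_i^\alpha\}-\mu_\alpha^{-1}F(\alpha W_i^\alpha)$. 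These are martingale differences in the customer-indexed filtration, because $W_i^\alpha$ is determined before $(V_i^\alpha,D_i^\alpha)$ are revealed. After scaling, this gives
\[
\tilde Z^\alpha(t)=\tilde Z^\alpha(0)+\mu_\alpha^{-1}\tilde A^\alpha(t)+\tilde S^\alpha(\bar A^\alpha(t))+\alpha^{-h}(\rho_\alpha-1)t-\tilde G^\alpha(t)-\tilde{\mathcal M}^\alpha(t)+\tilde L^\alpha(t)+o(1).
\]
The abandonment term is
\[
\tilde G^\alpha(t)=\alpha^h\mu_\alpha^{-1}\sum_{i\le A^\alpha(\alpha^{-2h}t)}F(\alpha^{1-h}\tilde W_i^\alpha).
\]
The exponent works out exactly: $F(\alpha^{1-h}x)\approx\beta\alpha^{k(1-h)}x^k$, and $k(1-h)=h$. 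So each summand is of order $\alpha^{h}$, and after multiplying by $\alpha^h$ and summing over about $\alpha^{-2h}$ customers the total is $O(1)$. Approximating the customer sum by $\lambda\int\cdot\,du$ through the fluid limit $\bar A^\alpha\to\lambda e$ gives
\[
\tilde G^\alpha\approx\beta\int_0^t(\tilde Z^\alpha(s))^kds.
\]

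\textbf{Negligibility of the error terms.} The martingale $\tilde{\mathcal M}^\alpha$ has predictable quadratic variation bounded by $\alpha^{2h}\sup_\alpha\E[(V^\alpha)^2]\sum_i F(\alpha W_i^\alpha)$. This is $O(\alpha^{2h}\alpha^{-2h}\alpha^{h})\to0$ on events where $\tilde Z^\alpha$ is bounded, so Doob's inequality makes $\tilde{\mathcal M}^\alpha$ vanish. The random time change in $\tilde S^\alpha(\bar A^\alpha)$ converges to $\tilde S(\lambda e)$, and $\lambda=\mu$ in the limit, which yields $\mu^{-1}c_sB_s(\mu t)$ by the random time-change theorem. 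Two further replacements are needed:
\begin{itemize}
\item replacing $F(\alpha^{1-h}x)$ by $\beta\alpha^hx^k$, which is valid uniformly on compacts by Assumption~\ref{assumption:F};
\item replacing the sum over arrivals by a time integral. Here I would use that $\tilde W_i^\alpha$ differs from $\tilde Z^\alpha$ at nearby times by at most the oscillation of $\tilde Z^\alpha$ over one interarrival, and the interarrival is $o(1)$ on the diffusion time scale.
\end{itemize}

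\textbf{Passing to the limit.} The limit equation~\eqref{eq:HT_limit_Zstar} is defined by the map
\[
x\mapsto(z,l),\qquad z=x-\beta\int z^k+l,
\]
with reflection. The drift $-\beta z^k$ is locally Lipschitz and nonincreasing on $[0,\infty)$. By a Gronwall argument combined with Lipschitz continuity of the Skorokhod map, this map is well defined and continuous on $D[0,T]$ with the $J_1$ or uniform topology at continuous limits. The continuity is local, and the monotone drift prevents blow-up.

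\textbf{Main obstacle: tightness.} The step I expect to be hardest is the a priori stochastic boundedness of $\sup_{t\le T}\tilde Z^\alpha(t)$, which is needed before the Taylor replacement and the martingale bound can be used. I would get it by comparison: the abandonment term is nonnegative, so $\tilde Z^\alpha$ is dominated by the reflection of the unthinned net input. That net input is tight by~\eqref{eq:FCLT_primitives} and the drift bound $|c|+o(1)$. With this domination in hand, I would do the following in order:
\begin{enumerate}
\item localize on $\{\sup\tilde Z^\alpha\le K\}$;
\item show the error terms vanish on that event;
\item apply the continuous mapping theorem and Skorokhod representation to obtain $(\tilde Z^\alpha,\tilde L^\alpha)\Rightarrow(Z^*,L^*)$;
\item use pathwise uniqueness of~\eqref{eq:HT_limit_Zstar} to identify the limit. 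Independence of $Z^*(0)$ from future Brownian increments ensures the limit is the stated diffusion.
\end{enumerate}
Finally, $\tilde A_0^\alpha$ and $\tilde Y^\alpha$ are continuous functionals of the same jointly converging components: the arrival term minus $\mu\tilde G^\alpha$, plus the service term for $Y$. Their limits $A_0^*$ and $Y^*$ then follow jointly, with the factor $\mu$ entering through $\lambda_\alpha\to\mu$.
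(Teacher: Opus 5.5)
Your route is the paper's. You dominate $\tilde Z^\alpha$ by the reflected unthinned netput to get compact containment, and you localize so that $F(\alpha W_i^\alpha)\le C_R\alpha^h$ via $k(1-h)=h$. A bracket bound of order $\alpha^h\bar A^\alpha(T)$ then kills the thinning martingales, you handle the service sums by a random time change, and you use a Picard--Gronwall solution map for the reflected equation with monotone drift $-\beta x^k$. Merging the count and service fluctuations into one martingale is fine for $\tilde Y^\alpha$. For $\tilde A_0^\alpha$, however, you also need the pure count martingale $\alpha^h\sum_i(\mathds{1}\{D_i^\alpha\le W_i^\alpha\}-F(\alpha W_i^\alpha))$, which the same bracket bound handles. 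There is, however, a real gap in the step where the customer sum is replaced by a time integral.

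After the Taylor replacement you need
\[
\alpha^{2h}\sum_{i\le A^\alpha(\alpha^{-2h}t)}(\tilde W_i^\alpha)^k=\int_0^t\tilde Z^\alpha(s-)^k\,d\bar A^\alpha(s)\approx\lambda_\alpha\int_0^t\tilde Z^\alpha(s)^k\,ds .
\]
The oscillation of $\tilde Z^\alpha$ over a single interarrival interval is indeed small, since the dynamics bound it by $\alpha^h(V_i^\alpha+U_{i+1}^\alpha)$. Up to errors this bound controls, the problem then reduces to showing $\alpha^{2h}\sum_i(\tilde W_i^\alpha)^k(1-\lambda_\alpha U_{i+1}^\alpha)\to0$. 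That is, you must swap $d\bar A^\alpha$ for $\lambda_\alpha\,ds$ against a random integrand generated by the same arrival stream.

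Neither ``the interarrival is $o(1)$'' nor $\|\bar A^\alpha-\lambda_\alpha e\|_T\to0$ gives this by itself. The fluid limit permits the swap only for integrands that are asymptotically uniformly approximable by step functions with boundedly many steps. So you need control of the modulus of continuity of $\tilde Z^\alpha$ over macroscopic windows, not merely of $\sup_{t\le T}\tilde Z^\alpha(t)$, and your plan only establishes the latter.

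The paper fills this by proving $C$-tightness of $\tilde Z^\alpha$. On $[0,\tau_R^\alpha]$ the compensator increments are at most $C_R$ times the increments of $\bar A^\alpha$. Hence the compensator, $\tilde A_0^\alpha$ and $\tilde Y^\alpha$ are $C$-tight, and $\omega_T(\Gamma(y),\delta)\le2\omega_T(y,\delta)$ transfers this to $\tilde Z^\alpha$. Along a Skorokhod-represented subsequence, $\alpha^{-h}F(\alpha^{1-h}\tilde Z^\alpha(\cdot-))\to\beta Z^k$ uniformly with $Z$ continuous. A step-function approximation together with $\|\bar A^\alpha-\lambda_\alpha e\|_T\to0$ then justifies the swap, and the subsequence criterion gives full convergence.

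A martingale argument using independence of $U_{i+1}^\alpha$ from $\tilde W_i^\alpha$ is another way out. It would, however, require independence and uniform second-moment conditions on the interarrivals beyond the FCLT hypothesis. With this step added, the rest of your argument goes through.
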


For Robust Queueing, we will ultimately use a stationary approximation for increments of the effective net-input process (see, e.g., \citet[Section~5.2]{whitt2018using}).
Accordingly, we assume henceforth that $Z^*(0)$ is distributed according to the unique stationary distribution of $Z^*$.

\begin{remark}
When $k=1$, Theorem~\ref{thm:HT_limit} reduces to the ROU limit in Proposition~\ref{Prop:WG05}.
For general $k\ge 1$, the limiting diffusion \eqref{eq:HT_limit_Zstar} has polynomial drift.
Its stationary density is given by
\begin{equation}\label{eq:stationary_dist}
    \pi_k(x) = \frac{1}{G_k}\exp\left\{\frac{2\mu}{c_x^2}\left( c x-\frac{\beta}{k+1}x^{k+1} \right)\right\}\mathds{1}\{x\ge 0\},
\end{equation}
where $G_k$ is the normalizing constant; see, e.g., \citet[Section~3]{browne1995piecewise}.
\end{remark}

\subsection{The Variance Function of the Stationary Heavy-Traffic Limit}

Recall that the heavy-traffic limit in Theorem~\ref{thm:HT_limit} is characterized by the parameter tuple
\[
    \Xi \triangleq (c,k,\mu,c_a^2,c_s^2,\beta).
\]
Let $(Z^*,L^*)$ be the stationary reflected diffusion in Theorem~\ref{thm:HT_limit}, and note that the limit total-input process is given by
\[
    Y^*(t) = Z^*(t)-Z^*(0)-ct-L^*(t).
\]
We define the variance function of the stationary heavy-traffic limit as
\[
    v(t;\Xi) \triangleq \Var\bigl(Y^*(t)\bigr) = \Var\bigl(Z^*(t)-Z^*(0)-L^*(t)\bigr),
\]
where $Z^*(0)$ has density \eqref{eq:stationary_dist}.
The current subsection develops $v(t;\Xi)$; Section~\ref{sec:HT_var} constructs the finite-system interpolation, and Section~\ref{sec:RQ_ab2_algo} inserts it into the refined RQ fixed point.

The central effect is negative feedback.
An upward input fluctuation first increases workload, but the resulting congestion raises subsequent abandonment and removes some future work.
The feedback is negligible over an infinitesimal horizon and accumulates over longer horizons.
We measure it by a dimensionless ratio $w_{c,k}(t)$: the numerator is effective-input variance in a normalized diffusion, and the denominator is the variance of its Brownian input without feedback.
Thus $w_{c,k}(t)=1$ means no variance reduction at horizon $t$, whereas a smaller value means stronger suppression by abandonment.

The construction proceeds in three steps.
We first reduce every parameter tuple $\Xi$ to a normalized base diffusion.
We then derive a response-function representation that separates future-noise and stationary-initial-state contributions.
Finally, we use that representation to establish qualitative properties and to compute a reusable offline table.

\subsubsection{The Base Diffusion and the Definition of \texorpdfstring{$w_{c,k}$}{wck}}\label{sec:def_wck}

Calculating the variance function $v(t;\Xi)$ for general model primitives is challenging, because it depends on the entire parameter tuple $\Xi$.
However, we now present a scaling argument, and show that all such variance functions can be expressed in terms of a normalized base model.
The base model is obtained by setting $\mu=c_a^2=c_s^2=\beta=1$ while changing $c$ and $k$.

For fixed $c\in \R$ and integer $k\ge1$, define the base reflected diffusion
\begin{equation}\label{eq:base_reflected_SDE}
    Z^{c,k}(t) = Z^{c,k}(0) + \sqrt{2}B(t) + \int_0^t \bigl(c-(Z^{c,k}(s))^k\bigr)ds + L^{c,k}(t), \qquad t\ge0.
\end{equation}
The process is reflected at zero, and we initialize it in stationarity.
Its stationary density is
\begin{equation}\label{eq:pi_ck}
    \pi_{c,k}(x)
    = \frac{\exp\left\{ cx-x^{k+1} / (k+1) \right\} }{ \int_0^\infty \exp\left\{ cy-y^{k+1} / (k+1) \right\}dy }\mathds{1}\{x\ge0\}.
\end{equation}
The associated effective-input fluctuation is
\begin{equation}\label{eq:base_Y_def}
    Y^{c,k}(t) = Z^{c,k}(t)-Z^{c,k}(0)-ct-L^{c,k}(t) = \sqrt{2}B(t)-\int_0^t (Z^{c,k}(s))^kds .
\end{equation}
The Brownian term $\sqrt{2}B(t)$ has variance $2t$.
The integral term is the abandonment-induced feedback that reduces the effective-input variability.
We therefore define
\begin{equation}\label{eq:w}
    v_{c,k}(t) \triangleq \Var(Y^{c,k}(t)), \qquad t\ge0, \qquad w_{c,k}(t) \triangleq \frac{v_{c,k}(t)}{2t}, \qquad t>0.
\end{equation}
For notational convenience at the origin, set $w_{c,k}(0)\triangleq1$.

The next lemma explains why it is enough to calculate $w_{c,k}$ for the base model.
All primitive distributions enter the heavy-traffic variance through a rescaling of the time argument and the load parameter $c$.

\begin{lemma}[Scaling representation of the heavy-traffic variance]\label{lm:var_expression}
Define
\[
    \tau \triangleq \left(\frac{c_x^2}{2\mu}\right)^{\frac{k-1}{k+1}} \beta^{\frac{2}{k+1}},
    \qquad \tilde c \triangleq c \left(\frac{c_x^2}{2\mu}\right)^{-\frac{k}{k+1}} \beta^{-\frac{1}{k+1}}.
\]
Then, for all $t\ge0$,
\[
    v(t;\Xi) = \frac{c_x^2}{\mu}t w_{\tilde c,k}(\tau t).
\]
\end{lemma}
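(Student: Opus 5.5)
The plan is a deterministic space–time rescaling that carries the stationary limit diffusion of Theorem~\ref{thm:HT_limit} onto the base diffusion \eqref{eq:base_reflected_SDE} with parameters $(\tilde c,k)$. Once this is done, the variance identity follows by rescaling the definition \eqref{eq:w}.

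\textbf{Step 1: merge the noise.} First merge the two Brownian terms in \eqref{eq:HT_limit_Zstar}. Since $B_a,B_s$ are independent, $X(t)\triangleq\mu^{-1}c_aB_a(\mu t)+\mu^{-1}c_sB_s(\mu t)$ is a driftless Brownian motion with variance $(c_x^2/\mu)t$. Write $\kappa\triangleq c_x^2/(2\mu)$, so that $X\stackrel{d}{=}\sqrt{2\kappa}\,W$ for a standard Brownian motion $W$. Thus $Z^*$ solves the reflected equation $dZ^*=\sqrt{2\kappa}\,dW+(c-\beta (Z^*)^k)dt+dL^*$.

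\textbf{Step 2: choose the scaling constants.} Look for constants $a,\tau>0$ such that $\hat Z(s)\triangleq a^{-1}Z^*(s/\tau)$ solves \eqref{eq:base_reflected_SDE} with load $\tilde c$ and a new standard Brownian motion $B(s)\triangleq\sqrt{\tau}\,W(s/\tau)$. Substituting, $\hat Z$ has noise coefficient $\sqrt{2\kappa}/(a\sqrt\tau)$, drift $(c-\beta a^k\hat Z^k)/(a\tau)$, and regulator $\hat L(s)=a^{-1}L^*(s/\tau)$. The regulator is still nondecreasing, starts at $0$, and increases only when $\hat Z=0$. Matching to the base model requires three conditions:
\begin{itemize}
\item $a^2\tau=\kappa$, from the noise coefficient;
\item $\beta a^{k-1}=\tau$, from the polynomial coefficient;
\item $\tilde c=c/(a\tau)$, from the constant drift.
\end{itemize}
Solving gives $a^{k+1}=\kappa/\beta$. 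Then $\tau=\beta a^{k-1}=\kappa^{(k-1)/(k+1)}\beta^{2/(k+1)}$ and $\tilde c=ca/\kappa=c\,\kappa^{-k/(k+1)}\beta^{-1/(k+1)}$, which are exactly the constants in the statement.

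\textbf{Step 3: transfer stationarity.} I expect the main care to be needed here rather than in the algebra. I would check directly that if $Z^*(0)$ has density \eqref{eq:stationary_dist}, then $Z^*(0)/a$ has density $\pi_{\tilde c,k}$ in \eqref{eq:pi_ck}. The exponent is $\kappa^{-1}(c a y-\beta a^{k+1}y^{k+1}/(k+1))=\tilde c y-y^{k+1}/(k+1)$, using $a^{k+1}\beta=\kappa$ and $ca/\kappa=\tilde c$. Moreover, $Z^*(0)$ is independent of the future noise, hence independent of $B$. Pathwise uniqueness of the reflected equation, already used in Theorem~\ref{thm:HT_limit} for $Z^*$ (Lipschitz noise, locally Lipschitz monotone drift on $[0,\infty)$), then gives joint equality in law of processes:
\[
(Z^*(t),L^*(t))_{t\ge0}\stackrel{d}{=}\bigl(aZ^{\tilde c,k}(\tau t),\,aL^{\tilde c,k}(\tau t)\bigr)_{t\ge0}.
\]

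\textbf{Step 4: conclude.} From $Y^*(t)=Z^*(t)-Z^*(0)-ct-L^*(t)$ and $ct=a\tilde c\,\tau t$, the joint equality in Step 3 yields $Y^*(t)\stackrel{d}{=}aY^{\tilde c,k}(\tau t)$, with $Y^{\tilde c,k}$ as in \eqref{eq:base_Y_def}. Hence
\[
v(t;\Xi)=a^2v_{\tilde c,k}(\tau t)=a^2\cdot 2\tau t\,w_{\tilde c,k}(\tau t)=2\kappa\,t\,w_{\tilde c,k}(\tau t)=\frac{c_x^2}{\mu}\,t\,w_{\tilde c,k}(\tau t)
\]
for $t>0$. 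The case $t=0$ is trivial, since both sides vanish.
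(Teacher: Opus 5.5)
Your proposal is correct and follows the paper's proof essentially step for step. Your $a$ is the paper's $\theta=(c_x^2/(2\mu\beta))^{1/(k+1)}$, and the two arguments share the same space--time rescaling, the same check that the stationary initial law maps to $\pi_{\tilde c,k}$, the same appeal to uniqueness of the reflected equation with an independent initial state, and the same final identity $v(t;\Xi)=a^2v_{\tilde c,k}(\tau t)$. The one difference is presentational: you derive $a$, $\tau$, and $\tilde c$ by solving the three matching conditions, whereas the paper states them and verifies the same three relations.
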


Thus, once $w_{c,k}$ has been computed for a grid of base parameters $(c,k)$, the heavy-traffic variance function for any model primitives can be evaluated by rescaling $c$ and $t$.

\subsubsection{Response Functions and Variance Decomposition}

To analyze and compute $w_{c,k}$, we introduce two response functions associated with the base diffusion.
The first tracks the part of an input perturbation that remains after abandonment feedback, while the second records the expected cumulative feedback generated from a given initial workload.

Define $q_k(x)\triangleq kx^{k-1}$ for $x\ge0$.
For $x\ge0$, let $\mathbb P_x$ denote the law of the reflected diffusion \eqref{eq:base_reflected_SDE} started from $Z^{c,k}(0)=x$, and let $T_0\triangleq\inf\{t\ge0:Z^{c,k}(t)=0\}$.
For $t\ge0$ and $x\ge0$, define
\begin{equation}\label{eq:psi_def}
    \psi_{c,k}(t,x)
    \triangleq
    \E_x\left[
        \exp\left\{-\int_0^{t\wedge T_0}q_k(Z^{c,k}(s))ds\right\}
    \right].
\end{equation}
The exponential factor discounts a perturbation while the diffusion remains above the reflecting boundary.
When the workload is high, $q_k(Z^{c,k}(s))$ is large and the perturbation is damped more rapidly.

Define also
\begin{equation}\label{eq:varphi_def}
    \varphi_{c,k}(t,x)
    \triangleq
    \E_x\left[\int_0^t(Z^{c,k}(s))^kds\right].
\end{equation}
By \eqref{eq:base_Y_def}, $\E_x[Y^{c,k}(t)]=-\varphi_{c,k}(t,x)$.
Thus $\varphi_{c,k}$ records the expected cumulative abandonment feedback generated from the initial state.

The next proposition provides the backward characterization used both in the analysis and in the numerical computation.
Its derivative identity is the link between the two response functions.

\begin{proposition}[Backward characterization of $\psi_{c,k}$ and $\varphi_{c,k}$]\label{prop:psi_h_pde}
The function $\psi_{c,k}$ defined in \eqref{eq:psi_def} is bounded and continuous, is uniquely characterized among bounded continuous functions by the corresponding stopped Feynman--Kac representation, and satisfies
\begin{equation}\label{eq:psi_pde}
    \begin{cases}
        \partial_t \psi(t,x) = \partial_{xx}\psi(t,x) + (c-x^k)\partial_x\psi(t,x) - q_k(x)\psi(t,x), & t>0,\ x>0,\\
        \psi(0,x)=1, & x\ge0,\\
        \psi(t,0)=1, & t\ge0.
    \end{cases}
\end{equation}
The function $\varphi_{c,k}$ defined in \eqref{eq:varphi_def} is continuous, is uniquely characterized in the stated polynomial-growth class by the corresponding stopped Feynman--Kac representation, and satisfies
\begin{equation}\label{eq:varphi_pde}
    \begin{cases}
        \partial_t \varphi(t,x) = \partial_{xx}\varphi(t,x) + (c-x^k)\partial_x\varphi(t,x) + x^k, & t>0,\ x>0,\\
        \varphi(0,x)=0, & x\ge0,\\
        \partial_x\varphi(t,0)=0, & t\ge0,
    \end{cases}
\end{equation}
that satisfies $\sup_{t\in[0,T]}\sup_{x\ge0}|\varphi(t,x)|/(1+x^k)<\infty$ for every $T<\infty$.
For every $t>0$, the map $x\mapsto\varphi_{c,k}(t,x)$ is continuously differentiable on $[0,\infty)$, and
\begin{equation}\label{eq:varphi_x_psi}
    \partial_x\varphi_{c,k}(t,x)=1-\psi_{c,k}(t,x),
    \qquad x\ge0.
\end{equation}
\end{proposition}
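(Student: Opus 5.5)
The argument has three parts: a Feynman--Kac characterization of $\psi_{c,k}$, a similar one for $\varphi_{c,k}$, and the derivative identity \eqref{eq:varphi_x_psi}, which we derive by a pathwise derivative (synchronous coupling) argument.

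\textbf{Step 1: $\psi_{c,k}$.} We treat the reflected diffusion \eqref{eq:base_reflected_SDE} as a strong solution of a one-dimensional SDE with locally Lipschitz, one-sided dissipative drift $c-x^k$ and constant diffusion coefficient $\sqrt2$. Pathwise uniqueness then follows from the Skorokhod map together with monotonicity of $x\mapsto -x^k$ on $[0,\infty)$. Moment bounds $\sup_{s\le T}\E_x[(Z^{c,k}(s))^{p}]\le C_T(1+x^p)$ follow by It\^o's formula applied to $x^p$. Up to $T_0$ the process is an unreflected diffusion. Since the killing rate $q_k\ge0$, $\psi_{c,k}\in[0,1]$, which gives boundedness. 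Continuity in $(t,x)$ follows from continuity of the flow $x\mapsto Z^{c,k}_x$ under synchronous coupling and from continuity of $T_0$ in $x$. The latter holds a.s. because Brownian paths immediately cross any level they hit.

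The PDE \eqref{eq:psi_pde} is the standard stopped Feynman--Kac representation for the generator $\partial_{xx}+(c-x^k)\partial_x-q_k$ on $(0,\infty)$ with Dirichlet data $1$ at $x=0$ and initial data $1$. Uniqueness among bounded continuous solutions comes from applying It\^o's formula to $e^{-\int_0^{s}q_k}u(t-s,Z(s))$ up to $(t\wedge T_0)$ and using boundedness. Interior smoothness comes from local parabolic regularity, since the coefficients are smooth.

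\textbf{Step 2: $\varphi_{c,k}$.} Here there is no killing, the reflection is Neumann, and the source is $x^k$. Continuity and the growth bound $|\varphi|\le C_T(1+x^k)$ follow from the moment bounds and from $\E_x[Z(s)^k]\lesssim 1+x^k$. Uniqueness in that growth class follows by verification. For a candidate $u$ in the class, It\^o's formula applied to $u(t-s,Z(s))$ produces a $dL$ term that vanishes by the Neumann condition $\partial_x u(\cdot,0)=0$. Localization plus uniform integrability, supplied by the polynomial moment bounds, then yields $u=\varphi$.

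\textbf{Step 3: the identity \eqref{eq:varphi_x_psi}, the main obstacle.} We couple $Z_x$ and $Z_{x+\epsilon}$ with the same Brownian motion. Before $Z_x$ hits zero, the difference $D_\epsilon=Z_{x+\epsilon}-Z_x$ satisfies $dD_\epsilon=-(Z_{x+\epsilon}^k-Z_x^k)\,dt$, so
\[
  \epsilon^{-1}D_\epsilon(s)\to \exp\Big\{-\int_0^s q_k(Z_x(r))\,dr\Big\}.
\]
Once $Z_x$ reaches zero, reflection pins the derivative to zero: the Skorokhod map makes $\partial_x Z_x(s)=0$ for $s\ge T_0$, the standard derivative-of-reflected-flow result. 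Then
\[
  \partial_x\int_0^t Z_x^k\,ds=\int_0^t q_k(Z_x(s))\,e^{-\int_0^s q_k}\,\mathds 1\{s<T_0\}\,ds=1-e^{-\int_0^{t\wedge T_0}q_k},
\]
and taking expectations gives $\partial_x\varphi=1-\psi$. Exchanging expectation and derivative requires dominated convergence. We obtain it from the monotonicity $0\le D_\epsilon\le\epsilon$, which holds because the drift is decreasing and the reflection is order-preserving, together with the mean value bound $|Z_{x+\epsilon}^k-Z_x^k|\le q_k(Z_{x+\epsilon})D_\epsilon$ and polynomial moments.

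The delicate point is justifying the derivative after $T_0$. Rather than relying on a general derivative-of-reflected-flow result, we would prove directly that $D_\epsilon(s)\to0$ for $s>T_0$ at rate $o(\epsilon)$ a.s. The argument is that $Z_{x+\epsilon}$ hits zero within $o(1)$ time after $T_0$, because Brownian motion oscillates near zero; after that, the two reflected paths coincide. Continuity of $x\mapsto\varphi_x(t,x)$ up to $x=0$ then follows from continuity of $\psi$ together with $\psi(t,0)=1$, which also recovers the Neumann condition $\partial_x\varphi(t,0)=0$ consistently.
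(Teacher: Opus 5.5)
Your proposal is correct, but it proves the key identity \eqref{eq:varphi_x_psi} by a different route from the paper.

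\textbf{The paper's route.} It truncates the drift to a bounded smooth $g_R$ that agrees with $x^k$ on $[0,R]$. It then imports the weak-derivative formula for reflected flows from \citet[Proposition~2.8]{bossy2011stochastic}, which gives $0\le\partial_x\varphi_R\le1$, and passes $R\to\infty$ in the distributional sense. For the PDEs it only claims the interior equations in the viscosity sense. Its uniqueness statement is simply that bounded (resp.\ polynomial-growth) functions satisfying the stopped identities equal the expectations once $R\to\infty$.

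\textbf{Your route.} You use the one-dimensional structure directly. The monotone synchronous coupling gives $0\le Z_{x+\epsilon}-Z_x\le\epsilon$. The gap solves a linear ODE before $T_0(x)$. The two paths coalesce once $Z_{x+\epsilon}$ hits zero, and $T_0(x+\epsilon)\to T_0(x)$ a.s. This yields a genuine pathwise derivative, dominated by $\int_0^t q_k(Z_{x+1}(s))\,ds$, with no truncation and no external flow-derivative theorem. The same a.s.\ continuity of $T_0$ gives continuity of $\psi$. Since the right derivative $1-\psi$ is continuous, $C^1$ regularity and the Neumann condition follow at once.

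\textbf{What your route costs.} The price is on the PDE side.
\begin{itemize}
\item Asserting classical interior solutions requires solving the Dirichlet problem on bounded subrectangles with continuous data and identifying that solution with $\psi$ (resp.\ $\varphi$) through the stopped identity. You only gesture at this with ``local parabolic regularity.''
\item Your It\^o verification for uniqueness needs the competitor to be $C^{1,2}$ in the interior. For $\varphi$, it also needs $\partial_x u$ continuous up to $x=0$ so the $dL$ term can be handled.
\item Sending $R\to\infty$ needs a bound on $\E_x[\sup_{s\le t}Z(s)^p]$ (or on the stopped moments), not just $\sup_{s\le t}\E_x[Z(s)^p]$. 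This is available, for example, from the paper's comparison with reflected Brownian motion with drift $c$.
\end{itemize}

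\textbf{Trade-off.} The paper's viscosity and stopped-identity formulation avoids that regularity theory. Its truncated objects $g_R$, $Z_R$, $\varphi_R$ are also reused in the proof of Lemma~\ref{lm:w_variance_rep}. Your argument is more elementary and self-contained for this proposition.
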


Equation~\eqref{eq:varphi_x_psi} says that the marginal increase in expected cumulative abandonment feedback is $1-\psi_{c,k}$.
Hence $\psi_{c,k}$ is the residual fraction of a unit initial perturbation after that feedback is accounted for.
The localized It\^o argument in Lemma~\ref{lm:w_variance_rep} then separates the effective-input variance into future-noise and stationary-initial-state contributions.

\begin{lemma}[Variance representation]\label{lm:w_variance_rep}
For each $t>0$,
\begin{equation}\label{eq:w_variance_rep}
    w_{c,k}(t)
    = \frac{1}{t}\int_0^t \E_{\pi_{c,k}}\left[\psi_{c,k}(u,Z)^2\right]du
    + \frac{1}{2t}\Var_{\pi_{c,k}}\left(\varphi_{c,k}(t,Z)\right),
    \qquad Z\sim\pi_{c,k}.
\end{equation}
\end{lemma}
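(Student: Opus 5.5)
The plan is to decompose $Y^{c,k}(t)=\sqrt2 B(t)-\int_0^t (Z^{c,k}(s))^k ds$ via the law of total variance, conditioning on $Z(0)=Z\sim\pi_{c,k}$. By \eqref{eq:varphi_def}, $\E[Y^{c,k}(t)\mid Z(0)]=-\varphi_{c,k}(t,Z(0))$. The between-initial-state term is therefore $\Var_{\pi_{c,k}}(\varphi_{c,k}(t,Z))$, which gives the second term of \eqref{eq:w_variance_rep} after dividing by $2t$. What remains is the conditional variance term, $\E_{\pi_{c,k}}[\Var_x(Y^{c,k}(t))]$, and I will show
\[
\Var_x\bigl(Y^{c,k}(t)\bigr)=2\int_0^t \E_x\bigl[(\partial_x\varphi_{c,k}(t-u,Z(u))-1)^2\bigr]du=2\int_0^t\E_x\bigl[\psi_{c,k}(t-u,Z(u))^2\bigr]du,
\]
where the second equality is \eqref{eq:varphi_x_psi}. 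Averaging over $x\sim\pi_{c,k}$ and using stationarity, $Z(u)\sim\pi_{c,k}$, the integrand becomes $\E_{\pi}[\psi(t-u,Z)^2]$. The substitution $u\mapsto t-u$ then yields $2\int_0^t\E_\pi[\psi(u,Z)^2]du$, and dividing by $2t$ gives the first term.

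For the conditional variance I will use a Doob martingale. Fix $t$ and set
\[
M(u)\triangleq \E_x\bigl[Y(t)\mid\mathcal F_u\bigr]=\sqrt2 B(u)-\int_0^u Z(s)^k ds-\varphi(t-u,Z(u)),
\]
which follows from the Markov property. Applying Itô's formula to $\varphi(t-u,Z(u))$ and using the PDE \eqref{eq:varphi_pde}, the $du$ terms cancel against $-Z(u)^k du$. The reflection term $\partial_x\varphi(t-u,0)\,dL(u)$ vanishes by the Neumann condition. This leaves
\[
dM(u)=\sqrt2\bigl(1-\partial_x\varphi(t-u,Z(u))\bigr)dB(u)=\sqrt2\,\psi(t-u,Z(u))\,dB(u).
\]
Since $M(0)=-\varphi(t,x)$ and $M(t)=Y(t)$, the Itô isometry gives $\Var_x(Y(t))=2\int_0^t\E_x[\psi(t-u,Z(u))^2]du$, because $\psi\in[0,1]$ is bounded.

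The main obstacle is justifying Itô's formula: $\varphi$ is only known to be $C^1$ in $x$ on $[0,\infty)$ and to solve \eqref{eq:varphi_pde} classically in the interior. The term $x^k$ is unbounded, and the time argument runs up to $t-u\to0$, where regularity may degrade. I plan to localize with stopping times $\tau_n=\inf\{u:Z(u)\ge n\}\wedge(t-1/n)$ and apply Itô on $[0,\tau_n]$, where $\varphi\in C^{1,2}$ by interior parabolic regularity. At the boundary $x=0$ I will either use a one-sided $C^{1,2}$ extension or a mollification argument, relying on $\partial_x\varphi(\cdot,0)=0$. To pass $n\to\infty$ I will use the polynomial growth bound on $\varphi$, together with finite moments of $Z$ under $\pi_{c,k}$ and under $\mathbb P_x$ (the superlinear restoring drift gives these), to obtain $L^2$ convergence of the stopped martingales. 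This is the localized Itô argument the paper alludes to. The remaining checks (integrability of $\varphi(t,Z)$ under $\pi$, and Fubini for the $u$-integral) are routine from those moment bounds.
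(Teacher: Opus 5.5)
Your argument is correct, and its probabilistic skeleton is the same as the paper's. Your Doob martingale is exactly the identity $\int_0^t (Z^{c,k}(s))^k\,ds=\varphi_{c,k}(t,Z^{c,k}(0))+\sqrt2\int_0^t\partial_x\varphi_{c,k}(t-s,Z^{c,k}(s))\,dB(s)$ that the paper establishes. After that, both proofs use \eqref{eq:varphi_x_psi}, orthogonality of the time-zero term and the stochastic integral (your law of total variance), It\^o's isometry, and stationarity.

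The difference is in how It\^o's formula is justified. The paper never uses regularity of $\varphi_{c,k}$. It truncates the drift to bounded $g_R$ and replaces reflection by the penalty $n\int_0^\cdot (Z_{R,n})^-\,dr$, so that classical Feynman--Kac and It\^o apply to an unreflected SDE with smooth bounded coefficients. It then removes penalty and truncation using the Bossy--Ciss\'e--Talay convergence of flow derivatives, which it needs anyway for \eqref{eq:varphi_x_psi}. Your route is more elementary on the stochastic side but moves the burden onto PDE regularity, and two points need care.

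First, Proposition~\ref{prop:psi_h_pde} only gives the interior equation in the viscosity sense. Your claim of ``$C^{1,2}$ by interior parabolic regularity'' therefore needs the standard upgrade: uniqueness for local Dirichlet problems on small interior cylinders, plus classical solvability there.

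Second, of your two boundary options, the one-sided $C^{1,2}$ extension presupposes regularity of $\varphi_{c,k}$ up to $x=0$, which nothing established supplies. Use the approximation option instead; an inward shift is enough.
\begin{itemize}
\item Apply It\^o to $\varphi_{c,k}(t-u,Z(u)+\varepsilon)$; this uses only interior smoothness.
\item The interior equation turns the $du$-term into $\bigl((Z+\varepsilon)^k-Z^k\bigr)\partial_x\varphi_{c,k}(t-u,Z+\varepsilon)-(Z+\varepsilon)^k$, which tends to $-Z^k$ because $0\le\partial_x\varphi_{c,k}\le1$.
\item The reflection term $\int\partial_x\varphi_{c,k}(t-u,\varepsilon)\,dL(u)$ vanishes as $\varepsilon\downarrow0$ by bounded convergence, since $\partial_x\varphi_{c,k}=1-\psi_{c,k}$ with $\psi_{c,k}$ continuous and $\psi_{c,k}(\cdot,0)=1$.
\item The stochastic integrals converge in $L^2$ by the same bound.
\end{itemize}
With that, your localization at $\tau_n$ and the passage $n\to\infty$ work as you describe. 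Boundedness of $\psi_{c,k}$ and continuity of $\varphi_{c,k}$ with $\varphi_{c,k}(0,\cdot)=0$ already suffice there.
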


\subsubsection{Qualitative Properties of the Variance-Reduction Function}

The variance representation and the sensitivity identity yield the properties needed in the refined RQ analysis.

\begin{proposition}\label{prop:w}
The variance-reduction function has the following properties.
\begin{enumerate}
\item For every $c\in\R$, integer $k\ge1$, and $t>0$, one has $0<w_{c,k}(t)\le1$, and $\lim_{t\downarrow0}w_{c,k}(t)=1=w_{c,k}(0)$.
\item For each fixed $c$ and $k$, the map $t\mapsto w_{c,k}(t)$ is strictly decreasing, and
\begin{equation}\label{eq:w_infty_formula}
    w_{c,k}(\infty)
    \triangleq \lim_{t\to\infty}w_{c,k}(t)
    = \pi_{c,k}(0)^2\int_0^\infty
    \frac{\overline\Pi_{c,k}(x)^2}{\pi_{c,k}(x)}dx
    >0,
    \qquad
    \overline\Pi_{c,k}(x)\triangleq\int_x^\infty\pi_{c,k}(y)dy.
\end{equation}
\item For each fixed $k$, the mapping $(c,t)\mapsto w_{c,k}(t)$ is jointly continuous on $\R\times[0,\infty)$, and hence uniformly continuous on compact subsets.
\item The long-time limit satisfies
\[
    \lim_{c\to-\infty}w_{c,k}(\infty)=1,
    \qquad
    \lim_{c\to\infty}w_{c,k}(\infty)=0.
\]
\end{enumerate}
\end{proposition}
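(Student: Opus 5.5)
The plan is to build everything on one stochastic-calculus identity for $Y^{c,k}$ and then read off the four parts. Throughout write $g(x)\triangleq x^k$, $\pi=\pi_{c,k}$, $\overline\Pi=\overline\Pi_{c,k}$ and $L=\partial_{xx}+(c-x^k)\partial_x$ (Neumann at $0$). The base diffusion is one-dimensional with gradient drift and unit-speed noise $\sqrt2B$. Its Dirichlet form is $\int (f')^2\pi\,dx$ and it is reversible with respect to $\pi$. The identity $\pi'=(c-x^k)\pi$ gives $\int_x^\infty y^k\pi(y)dy=c\,\overline\Pi(x)+\pi(x)$, and in particular $\E_\pi[g(Z)]=c+\pi(0)$.

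\textbf{Part 2, limit formula.} I would first prove \eqref{eq:w_infty_formula}, since the Poisson-equation computation behind it is also the template for the rest. Solve $Lu=g-\E_\pi g$ with $u'(0)=0$. Integrating $(u'\pi)'=(g-\E_\pi g)\pi$ from $x$ to $\infty$ gives
\[
u'(x)\pi(x)=\pi(0)\overline\Pi(x)-\pi(x),\qquad\text{so}\qquad 1+u'(x)=\frac{\pi(0)\overline\Pi(x)}{\pi(x)}.
\]
Itô's formula for the reflected process gives $u(Z(t))-u(Z(0))=\int_0^t(g-\E_\pi g)(Z)\,ds+\sqrt2\int_0^t u'(Z)\,dB+u'(0)L(t)$, and the last term vanishes. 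Hence
\[
Y^{c,k}(t)+t\,\E_\pi g=\sqrt2\int_0^t\bigl(1+u'(Z(s))\bigr)\,dB(s)-\bigl(u(Z(t))-u(Z(0))\bigr).
\]
The boundary term has bounded second moment under $\pi$. This needs a tail estimate on $u$ via $\overline\Pi(x)/\pi(x)\sim x^{-k}$, which makes $u$ grow only logarithmically. By stationarity,
\[
\frac{v_{c,k}(t)}{2t}\to\E_\pi\bigl[(1+u')^2\bigr]=\pi(0)^2\int_0^\infty\frac{\overline\Pi^2}{\pi}\,dx,
\]
which is exactly \eqref{eq:w_infty_formula}. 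Positivity is immediate because the integrand is positive.

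\textbf{Parts 1 and 2, bounds and monotonicity.} For part 1, $w_{c,k}(t)>0$ follows from Lemma~\ref{lm:w_variance_rep}, because $\psi_{c,k}>0$ from the exponential in \eqref{eq:psi_def}. Also $\psi_{c,k}(u,x)\to1$ as $u\downarrow0$ and $\varphi_{c,k}(t,\cdot)=O(t)$ in the $(1+x^k)$-weighted norm, so $w_{c,k}(t)\to1$ as $t\downarrow0$. For the upper bound and strict monotonicity, I would show that $v_{c,k}$ is strictly concave with $v_{c,k}'(0+)=2$; then $v_{c,k}(t)/t$ is strictly decreasing and bounded by $2$, which gives both $w_{c,k}\le1$ and the monotonicity in part 2. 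Concavity is equivalent to the increments of the stationary-increment process $Y^{c,k}$ having a strictly negative autocovariance density $R(s)$ for $s>0$.

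Computing $R(s)$ from $dY=\sqrt2dB-g(Z)dt$ gives three contributions:
\begin{itemize}
\item future noise against past drift contributes $0$;
\item past noise against future abandonment contributes $-2\E_\pi[\partial_xP_sg(Z)]$, and by \eqref{eq:varphi_x_psi} the $s$-integral of $\partial_xP_sg$ is $1-\psi_{c,k}$;
\item drift against drift contributes $+\Cov_\pi(g(Z(0)),g(Z(s)))=\E_\pi[(P_{s/2}g-\E_\pi g)^2]\ge0$, by reversibility.
\end{itemize}
So the task is to show $2\E_\pi[(P_sg)']>\Cov_\pi(g,P_sg)$. My first attempt would integrate by parts against $\pi$, using $\pi'=(c-x^k)\pi$, to rewrite both sides as expressions in $(P_sg)'$ weighted by $\overline\Pi$ and $\pi$. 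I would then use monotone coupling, which makes $(P_sg)'>0$, together with a Cauchy--Schwarz comparison. \emph{This step is the main obstacle.} If that comparison fails, the fallback is to show directly from the representation in Lemma~\ref{lm:w_variance_rep} that $\frac{d}{dt}\bigl(t\,w_{c,k}(t)\bigr)=\E_\pi[\psi_{c,k}(t,Z)^2]+\tfrac12\partial_t\Var_\pi\varphi_{c,k}(t,Z)$ is strictly decreasing in $t$, using the PDE \eqref{eq:psi_pde} and the fact that $\psi_{c,k}$ is decreasing in $t$.

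\textbf{Parts 3 and 4.} For part 3, joint continuity in $(c,t)$ on $t>0$ follows from continuity of $\psi_{c,k}$ and $\varphi_{c,k}$ in $c$. I would get this from the Feynman--Kac representations, using pathwise continuity of the reflected SDE in its drift parameter and dominated convergence with the polynomial-growth bounds of Proposition~\ref{prop:psi_h_pde}. Continuity at $t=0$ needs the small-$t$ estimate of part 1 to hold uniformly for $c$ in compacts, and uniform continuity on compact sets then follows. For part 4, I would apply Laplace asymptotics to \eqref{eq:w_infty_formula}.
\begin{itemize}
\item As $c\to-\infty$, $\pi$ is approximately $\mathrm{Exp}(|c|)$, so $\pi(0)\overline\Pi(x)/\pi(x)\to1$ uniformly on the effective support, and hence $w_{c,k}(\infty)\to1$.
\item As $c\to\infty$, $\pi$ concentrates near $x_c=c^{1/k}$ with width $(kc^{(k-1)/k})^{-1/2}$, and $\pi(0)$ is exponentially small relative to $\max\pi$. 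The integral is dominated by the region near $x_c$, where $\overline\Pi^2/\pi$ is only polynomially large, so $w_{c,k}(\infty)\to0$.
\end{itemize}
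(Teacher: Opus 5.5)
\textbf{The gap.} The unresolved step is the one you flag yourself, and more depends on it than you indicate. Both $w_{c,k}\le1$ in part~1 and strict monotonicity in part~2 rest on $R(s)<0$, and neither of your routes establishes it.

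The fallback fails as stated. $\tfrac12\partial_t\Var_\pi\varphi_{c,k}(t,Z)=\Cov_\pi(\varphi_t,P_tg)$ starts at $0$, behaves like $t\Var_\pi(g(Z))>0$ for small $t$, and tends to $0$ as $t\to\infty$. It is therefore not monotone, and knowing that $\psi_{c,k}$ decreases in $t$ does not control $\E_\pi[\psi_t^2]+\tfrac12\partial_t\Var_\pi\varphi_t$.

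In your first route, what is missing is an exact identity, not a Cauchy--Schwarz inequality. From $x^k\pi=c\pi-\pi'$ and $\E_\pi[g(Z)]=c+\pi(0)$, for $f$ of polynomial growth,
\[
\Cov_\pi\bigl(g(Z),f(Z)\bigr)=\E_\pi[f'(Z)]-\pi(0)\int_0^\infty f'(x)\,\overline\Pi(x)\,dx .
\]
With $f=P_sg$ your autocovariance becomes
\[
R(s)=-\E_\pi\bigl[(P_sg)'(Z)\bigr]-\pi(0)\int_0^\infty (P_sg)'(x)\,\overline\Pi(x)\,dx ,
\]
which is strictly negative because $(P_sg)'\ge0$, with strict inequality on $x>0$.

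Integrating in $s$ and using $\int_0^t(P_sg)'\,ds=\partial_x\varphi_t=1-\psi_t$ from \eqref{eq:varphi_x_psi} gives
\[
\tfrac12 v_{c,k}'(t)=\E_\pi[\psi_t(Z)]-\pi(0)\int_0^\infty\bigl(1-\psi_t(x)\bigr)\overline\Pi(x)\,dx .
\]
This is exactly the paper's key identity, which it obtains via $\mathcal L\varphi_t=P_tg-g$ and $\Cov_\pi(\varphi_t,\mathcal L\varphi_t)=-\E_\pi[(\partial_x\varphi_t)^2]$. The first term strictly decreases and the subtracted term is nondecreasing, so strict concavity of $v_{c,k}$, $w_{c,k}\le1$, and strict decrease of $w_{c,k}$ follow at once.

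Your three-term computation of $R(s)$ is also only heuristic. The rigorous form of the ``past noise against future abandonment'' term comes from the representation $\int_0^tg(Z)\,ds=\varphi_t(Z(0))+\sqrt2\int_0^t\partial_x\varphi_{c,k}(t-s,Z(s))\,dB(s)$ in the proof of Lemma~\ref{lm:w_variance_rep}.

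\textbf{The rest.} Your Poisson-equation derivation of \eqref{eq:w_infty_formula} is correct and more direct than the paper's route. The paper first identifies $\psi_\infty=\pi(0)\overline\Pi/\pi$ as the limit of $\psi_t$, using an $\mathcal L$-harmonic function and a stopped Feynman--Kac argument, and then takes the Ces\`aro limit of $\tfrac12v_{c,k}'(t)\downarrow\E_\pi[\psi_\infty^2]$. Your route needs no long-time analysis of $\psi_t$, but on its own it gives no monotonicity.

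One claim there is wrong: $u'=\pi(0)\overline\Pi/\pi-1\to-1$, so $u$ grows linearly, not logarithmically. This is harmless because $\pi$ has a super-exponential tail.

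For part~3 you propose to obtain continuity of $\psi_{c,k}$ in $c$, which involves the hitting time $T_0$. The paper avoids this: it quantile-couples the stationary initial laws, drives both diffusions with one Brownian motion, uses the Tanaka bound $\sup_{s\le T}|Z_n(s)-Z(s)|\le|Z_n(0)-Z(0)|+T|c_n-c|$, and proves $Y^{c_n,k}(t_n)\to Y^{c,k}(t)$ in $L^2$ directly. That is simpler than your route.

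Your part~4 sketches are workable. The paper makes the $c\to\infty$ case quantitative through $\psi_\infty(x)\le\exp\{-cx+x^{k+1}/(k+1)\}$ together with concentration of $Z/c^{1/k}$ at $1$.
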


The short-time limit says that abandonment has no first-order variance effect over an infinitesimal horizon.
The strict decrease in $t$ says that the feedback accumulates over longer horizons.
Joint continuity permits the finite-system index $\tilde c_\alpha$ to approach its heavy-traffic limit uniformly on compact scaled-time intervals.
The load endpoints describe the long-horizon variance reduction in extreme underload and overload.

\subsubsection{Numerical Evaluation of the Variance-Reduction Function}\label{sec:compute_w}

By Lemma~\ref{lm:var_expression}, evaluating $v(t;\Xi)$ reduces to computing the base function $w_{c,k}$.
Proposition~\ref{prop:psi_h_pde} and Lemma~\ref{lm:w_variance_rep} reduce this computation to two one-dimensional parabolic PDEs followed by stationary quadrature.
The function $w_{c,k}$ does not generally have a closed form because it depends on the transient law of the stationary reflected diffusion \eqref{eq:base_reflected_SDE}.

\begin{remark}[Numerical evaluation of $w_{c,k}$]\label{rmk:est_w}
For each pair $(c,k)$, we solve \eqref{eq:psi_pde} and \eqref{eq:varphi_pde} on a truncated interval $[0,x_{\max}]$.
The truncation point is chosen adaptively using the stationary density \eqref{eq:pi_ck}.
Specifically, $x_{\max}$ is increased until the unnormalized stationary density at $x_{\max}$ is at most $10^{-14}$ of its value at the mode.
The experiments use a uniform spatial grid with $2000$ subintervals.

The diffusion term is discretized by second-order finite differences.
The drift term $c-x^k$ is discretized by a centered difference when the local mesh satisfies $|c-x^k|\Delta x\le2$ and by generator-form upwinding otherwise.
At $x=0$, we impose $\psi(t,0)=1$ for \eqref{eq:psi_pde} and the reflecting boundary condition $\partial_x \varphi(t,0)=0$ for \eqref{eq:varphi_pde}.
At $x=x_{\max}$, we impose a zero-gradient boundary condition after placing $x_{\max}$ in the far stationary tail.

The PDEs are advanced in time using a Crank--Nicolson scheme.
The output time grid is logarithmic, with default range $10^{-4}\le t\le10^8$ and $1200$ positive grid points, together with the value $w_{c,k}(0)=1$.
The internal time step is adaptive and satisfies $\Delta t \le 0.03\max\{t,10^{-4}\}$.
At each output time, the expectations and variances with respect to $\pi_{c,k}$ in \eqref{eq:w_variance_rep} are evaluated by the trapezoidal rule on the spatial grid.
The time integral of $\E_{\pi_{c,k}}[\psi_{c,k}(u,Z)^2]$ is also accumulated by the trapezoidal rule.
When both the Brownian-noise term and the initial-state term vary by less than $10^{-9}$ over $25$ consecutive time steps, the remaining tail is extrapolated using the limiting values.

The table for $w_{c,k}$ is computed offline.
In the reported computations, we tabulate $c\in[-20,20]$ with spacing $0.1$ for each relevant value of $k$.
During the RQ fixed-point computation, $w_{c,k}(t)$ is evaluated from this table.
For positive $t$, we use shape-preserving cubic interpolation in $\log t$ along each fixed-$c$ row and linear interpolation in $c$.
For $t=0$, we set $w_{c,k}(0)=1$.
For $c$ outside the tabulated range, we apply the implementation's exponential tail convention, which extrapolates toward $1$ on the underloaded side and toward $0$ on the overloaded side.
This convention is used only outside the reported $c$-grid, while Proposition~\ref{prop:w} supplies the corresponding long-horizon endpoint values.
Figure~\ref{fig:wck} shows the numerically computed $w_{c,k}(t)$ for $k=1,2,3$ and selected values of $c$.
\end{remark}

\begin{figure}[htbp]
    \centering
    \includegraphics[width=\textwidth]{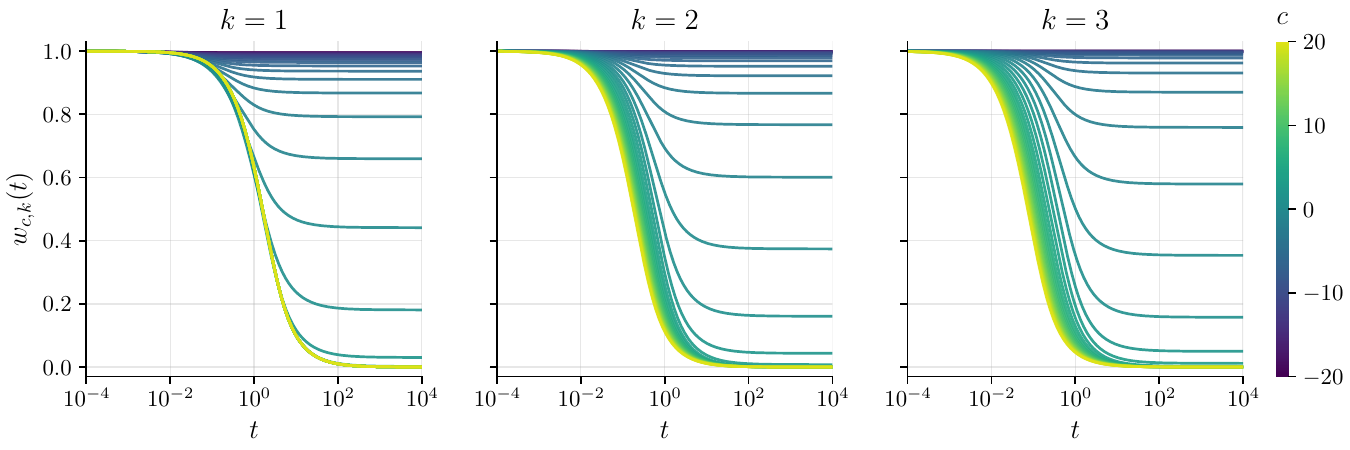}
    \caption{The numerically computed $w_{c,k}(t)$ for $k=1,2,3$ and selected values of $c$.}\label{fig:wck}
\end{figure}

Combining Lemma~\ref{lm:var_expression} with the precomputed table for $w_{c,k}$ gives an efficient way to evaluate the heavy-traffic variance function for any model primitives.
For a parameter tuple $\Xi$, we compute $\tilde c$ and $\tau$ from Lemma~\ref{lm:var_expression}, evaluate $w_{\tilde c,k}(\tau t)$ by interpolation, and then set
\[
    v(t;\Xi) = \frac{c_x^2}{\mu}t w_{\tilde c,k}(\tau t).
\]
Thus the online cost of the refined RQ approximation is only a table lookup and interpolation.

\begin{remark}[Why not use full hazard-rate scaling]\label{rem:why_not_full_hazard}
The full hazard-rate scaling of \citet{reed2008approximating} was developed to address a limitation of the conventional Ward--Glynn heavy-traffic regime: when the patience density at the origin vanishes, the usual ROU approximation no longer captures the abandonment effect at diffusion scale.
Under hazard-rate scaling, the limiting drift depends on the cumulative hazard $H(x)=\int_0^x h(u)du$, so the full patience-time distribution enters the limiting diffusion.
Our construction is motivated by \cite{reed2008approximating}, but retains only a two-parameter summary of it.

Adopting the full hazard-rate scaling in the RQ variance calibration would replace the polynomial feedback term $x^k$ in \eqref{eq:base_reflected_SDE} by a general cumulative hazard $H(x)$.
The resulting variance-reduction function would then be indexed by the entire function $H$ rather than by the two parameters $(c,k)$.
Consequently, both the variance table and the calibration of $b$ would have to be recomputed for each patience-time distribution.
Moreover, the scaling representation in Lemma~\ref{lm:var_expression} relies on the homogeneity of the polynomial drift $x^k$, so the same rescaling and table-lookup implementation is no longer available for a general cumulative hazard.

The present approximation is therefore a compromise between fidelity and implementability.
It retains the hazard-rate insight that the conventional regime must be modified when $f(0)=0$, but keeps only the local order $k$ and coefficient $\beta=F^{(k)}(0)/k!$, so that all model primitives enter the variance surrogate through the rescaling in Lemma~\ref{lm:var_expression} and the precomputed table for $w_{c,k}$.
\end{remark}

\subsection{Two Variance Limits and Their Interpolation}\label{sec:HT_var}

The finite-system variance surrogate combines two asymptotic statements that describe different time scales.
The first is a stationary heavy-traffic limit on horizons of order $\alpha^{-2h}$.
The second is a fixed-horizon long-patience limit away from exact critical loading.
We state the two endpoints first and introduce their interpolation only afterward.

\paragraph{Stationary heavy-traffic endpoint.}
Assume the $\alpha$th queue is in equilibrium at time $0$, and let $\E_e$ and $\Var_e$ denote expectation and variance under that stationary law.
Subtracting the deterministic centering term does not affect variance.
Recall from \eqref{eq:scaled_Y} that
\[
    \tilde Y^\alpha(t)
    =\alpha^h\left[Y^\alpha(\alpha^{-2h}t)-\alpha^{-2h}\rho_\alpha t\right].
\]
Hence
\[
    \Var_e\bigl(\tilde Y^\alpha(t)\bigr)
    =\alpha^{2h}\Var_e\bigl(Y^\alpha(\alpha^{-2h}t)\bigr).
\]
Weak convergence from Theorem~\ref{thm:HT_limit}, together with the explicit uniform-integrability condition below, yields convergence of these second moments.

\begin{corollary}[Heavy-traffic limit of the variance function]\label{thm:HT_var}
Assume the conditions of Theorem~\ref{thm:HT_limit}.
Assume further that the $\alpha$th system is in equilibrium at time $0$ and that for each fixed $t\ge 0$ the family $\{|\tilde Y^\alpha(t)|^2:\alpha>0\}$ is uniformly integrable (e.g., $\sup_{\alpha}\E_e[|\tilde Y^\alpha(t)|^{2+\delta}]<\infty$ for some $\delta>0$).
Then for each fixed $t\ge 0$,
\[
    \Var_e(\tilde Y^\alpha(t)) \longrightarrow v(t;\Xi) = \Var\bigl(Y^*(t)\bigr),\qquad \alpha\downarrow 0,
\]
where $Y^*$ is the limiting process in Theorem~\ref{thm:HT_limit} and $\Xi=(c,k,\mu,c_a^2,c_s^2,\beta)$ is the parameter tuple.
Moreover, with $\tilde c$ and $\tau$ defined in Lemma~\ref{lm:var_expression},
\[
    v(t;\Xi) = \frac{c_x^2}{\mu} t w_{\tilde c,k}(\tau t), \qquad t\ge 0.
\]
\end{corollary}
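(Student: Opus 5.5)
The proof has two parts. First, weak convergence of $\tilde Y^\alpha(t)$ plus uniform integrability gives convergence of the first two moments. Second, the identification $v(t;\Xi)=\frac{c_x^2}{\mu}t\,w_{\tilde c,k}(\tau t)$ is exactly Lemma~\ref{lm:var_expression}, so nothing new is needed there.

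\textbf{Step 1: marginal convergence.} Fix $t\ge0$. Theorem~\ref{thm:HT_limit} gives joint weak convergence $(\tilde Z^\alpha,\tilde L^\alpha,\tilde Y^\alpha,\tilde A_0^\alpha)\Rightarrow(Z^*,L^*,Y^*,A_0^*)$ in the Skorokhod space. To use it, we need the stationary initial laws $\tilde Z^\alpha(0)$ to converge to the stationary law of $Z^*$, namely the density \eqref{eq:stationary_dist}. This is part of the standing hypothesis \eqref{eq:FCLT_primitives} together with the convention, stated right after Theorem~\ref{thm:HT_limit}, that $Z^*(0)$ is stationary. I would invoke that convention rather than reprove an interchange-of-limits result.

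Because $Y^*$ has continuous paths (it is a Brownian term minus an absolutely continuous integral), the projection $x\mapsto x(t)$ is continuous at almost every limit path. The continuous mapping theorem then gives $\tilde Y^\alpha(t)\Rightarrow Y^*(t)$.

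\textbf{Step 2: moments.} The assumed uniform integrability of $|\tilde Y^\alpha(t)|^2$ implies uniform integrability of $|\tilde Y^\alpha(t)|$ as well. By the standard result that weak convergence plus uniform integrability yields convergence of expectations (Billingsley, Thm.~3.5), we get $\E_e[\tilde Y^\alpha(t)]\to\E[Y^*(t)]$ and $\E_e[\tilde Y^\alpha(t)^2]\to\E[Y^*(t)^2]$. Subtracting the squared mean gives $\Var_e(\tilde Y^\alpha(t))\to\Var(Y^*(t))=v(t;\Xi)$.

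If the $(2+\delta)$-moment bound is the hypothesis used, uniform integrability of the squares follows from de la Vallée-Poussin's criterion. For $t=0$ the statement is trivial, since both sides vanish.

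\textbf{Step 3: scaling formula.} Here I would quote Lemma~\ref{lm:var_expression}. For completeness I would recall its mechanism. Set $Z^*(t)=a\,Z^{\tilde c,k}(\tau t)$ with $a=(c_x^2/(2\mu\beta))^{1/(k+1)}$. Then the noise term $\mu^{-1}c_x B(\mu t)$ and the drift $c-\beta z^k$ map onto $\sqrt2 B$ and $\tilde c-z^k$ under this time–space change; the exponents of $\tau$ and $\tilde c$ are chosen to match. Consequently $Y^*(t)\stackrel{d}{=}a\,Y^{\tilde c,k}(\tau t)$, jointly with the stationary initial condition, and therefore
\[
\Var(Y^*(t))=a^2 v_{\tilde c,k}(\tau t)=2a^2\tau t\,w_{\tilde c,k}(\tau t).
\]
One checks that $2a^2\tau=c_x^2/\mu$.

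The only delicate point is Step 1's reliance on stationary initial laws converging. The uniform integrability hypothesis already handles the moment-convergence issue, so I expect no real obstacle.
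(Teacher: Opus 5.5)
Your proposal is correct and follows the same route as the paper: weak convergence of $\tilde Y^\alpha(t)$ from Theorem~\ref{thm:HT_limit} (via continuity of the time-$t$ projection at continuous limit paths), upgraded to convergence of the first two moments by the assumed uniform integrability, followed by Lemma~\ref{lm:var_expression} for the scaling formula $v(t;\Xi)=\frac{c_x^2}{\mu}t\,w_{\tilde c,k}(\tau t)$. You also correctly flag that the convergence of the equilibrium initial laws to the stationary law of $Z^*$ is taken as a standing assumption rather than proved, which is exactly how the paper treats it.
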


\paragraph{Finite-system form suggested by the heavy-traffic endpoint.}
For each finite $\alpha$, write $\mu$ and $V_1$ for $\mu_\alpha$ and $V_1^\alpha$, respectively, and define the normalized load index
\begin{equation}\label{eq:ctilde_alpha}
    \tilde c_\alpha
    \triangleq
    \alpha^{-h}(\rho_\alpha-1)
    \left(\frac{c_x^2}{2\mu}\right)^{-\frac{k}{k+1}}
    \beta^{-\frac1{k+1}}.
\end{equation}
Under the critical scaling in Corollary~\ref{thm:HT_var},
$\tilde c_\alpha\to\tilde c$.
Joint continuity of $w$ in Proposition~\ref{prop:w} therefore permits the finite-system index $\tilde c_\alpha$ to replace $\tilde c$, uniformly over compact scaled-time intervals.
For a single finite system, we henceforth suppress the subscript on $\rho_\alpha$.

Writing $s$ for physical time, Corollary~\ref{thm:HT_var} suggests
\begin{equation}\label{eq:V_approx_large_t}
    \Var_e\bigl(Y^\alpha(s)\bigr)
    \approx
    \frac{c_x^2}{\mu}s\,
    w_{\tilde c_\alpha,k}\bigl(\alpha^{2h}\tau s\bigr),
    \qquad s=O(\alpha^{-2h}).
\end{equation}
On this scale, the stationary effective-work rate converges to one, so
$\E_e[Y^\alpha(s)]\approx s$.
It is convenient to express \eqref{eq:V_approx_large_t} through the dimensionless effective IDW
\begin{equation}\label{eq:effective_IDW}
    I_w^{\mathrm{ab}}(s)
    \triangleq
    \frac{\Var_e(Y^\alpha(s))}
         {\E[V_1]\E_e[Y^\alpha(s)]}
    =\mu\frac{\Var_e(Y^\alpha(s))}{\E_e[Y^\alpha(s)]}.
\end{equation}
The heavy-traffic endpoint then becomes
\[
    I_w^{\mathrm{ab}}(s)
    \approx
    c_x^2 w_{\tilde c_\alpha,k}\bigl(\alpha^{2h}\tau s\bigr),
    \qquad s=O(\alpha^{-2h}).
\]
The factor $c_x^2$ is the limiting variability parameter; $w$ is the additional reduction generated by abandonment feedback.

\begin{remark}[Comparison with a queue without abandonment]
For a stationary $GI/GI/1$ marked-renewal input
$\widetilde Y(s)=\sum_{i=1}^{A(s)}V_i$, the work IDW converges to $c_x^2$ as $s\to\infty$.
For the abandonment model, Corollary~\ref{thm:HT_var} gives instead
\[
    \lim_{t\to\infty}\lim_{\alpha\downarrow0}
    \frac{\Var_e(Y^\alpha(\alpha^{-2h}t))}
         {\E[V_1]\E_e[Y^\alpha(\alpha^{-2h}t)]}
    =c_x^2w_{\tilde c,k}(\infty)
    \le c_x^2.
\]
Thus $w_{\tilde c,k}(\infty)$ is the limiting long-horizon attenuation factor.
\end{remark}

\paragraph{Fixed-horizon long-patience endpoint.}
The heavy-traffic result describes horizons that grow like $\alpha^{-2h}$.
For a fixed physical horizon, long patience produces a different limit.
When $\rho<1$, the effective input approaches the original marked renewal input because abandonment vanishes.
When $\rho>1$, stationary flow balance requires the limiting fraction of arrivals that are eventually served to be $1/\rho$.
The next lemma makes this statement precise under a stationary concentration assumption and shows that the first two effective-input moments converge to those of Bernoulli thinning with retention probability $1/(\rho\vee1)$.
Recall the stationary arrival IDC
\[
    I_a(t)\triangleq\frac{\Var(A(t))}{\lambda t}.
\]
\begin{lemma}\label{lm:var_lim}
Fix $t>0$ and consider a stationary $GI/GI/1{+}GI$ queue with arrival rate $\lambda$, service rate $\mu$, and patience distribution $F_\alpha(x)$.
Let $\rho=\lambda/\mu\ne1$.
If $\rho>1$, let $\xi_\rho>0$ be the unique solution of $\bar F(\xi_\rho)=1 / \rho$, and assume that $\alpha Z^\alpha(0)\Rightarrow\xi_\rho$ under the stationary law.
Then, as $\alpha\downarrow0$,
\[
    \E_e[Y^\alpha(t)] \longrightarrow (\rho\wedge 1)t,
    \qquad
    \frac{\Var_e(Y^\alpha(t))}{\E_e[Y^\alpha(t)]}
    \longrightarrow \frac{1}{\mu}\left( \frac{I_a(t)}{\rho\vee 1} + 1-\frac{1}{\rho\vee 1} + c_s^2 \right).
\]
\end{lemma}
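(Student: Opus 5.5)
Fix $t>0$ and write $Y^\alpha(t)=\sum_{i=1}^{A(t)}V_i\chi_i^\alpha$ with $\chi_i^\alpha\triangleq\mathds{1}\{D_i^\alpha>W_i^\alpha\}$, where $A$ does not depend on $\alpha$ (fixed $\lambda$, $\mu$). The plan is to show that, on the fixed window $(0,t]$, the thinning indicators become asymptotically i.i.d.\ Bernoulli with retention probability $p\triangleq1/(\rho\vee1)$, independent of $A$ and of the service marks. We then compute the first two moments of the limiting Bernoulli-thinned marked renewal input. For that limit object, conditioning on $A(t)$ gives
\[
    \E\Bigl[\textstyle\sum_{i\le A(t)}V_i\eta_i\Bigr]=\frac{p\lambda t}{\mu},
    \qquad
    \Var\Bigl(\textstyle\sum_{i\le A(t)}V_i\eta_i\Bigr)=\frac{p^2\Var(A(t))}{\mu^2}+\lambda t\,\Var(V\eta),
\]
with $\Var(V\eta)=\bigl(p(1+c_s^2)-p^2\bigr)/\mu^2$. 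Dividing the variance by the mean $p\lambda t/\mu$ gives $\mu^{-1}\bigl(pI_a(t)+1-p+c_s^2\bigr)$, and the mean equals $(\rho\wedge1)t$. This matches the claimed limits, so the whole task is to justify the passage to the limit of the moments.

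\textbf{Case $\rho<1$.} We show $\max_{i\le A(t)}(1-\chi_i^\alpha)\to0$ in probability. The workload $Z^\alpha$ is dominated pathwise by the workload $\tilde Z$ of the $GI/GI/1$ queue without abandonment, driven by the same arrivals and services. Since $\rho<1$, the stationary $\tilde Z$ is tight, and over $(0,t]$ we have $\sup_{u\le t}Z^\alpha(u)\le\tilde Z(0)+\sum_{i\le A(t)}V_i$, which is tight uniformly in $\alpha$. Each customer abandons with conditional probability $F(\alpha W_i)\le F(\alpha\sup_{u\le t}Z^\alpha(u))$, and this bound tends to $0$ in probability. Hence $\E_e[\sum_{i\le A(t)}(1-\chi_i^\alpha)]\to0$ by dominated convergence, because $A(t)$ is integrable. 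The quantity $Y^\alpha(t)$ is bounded by $\sum_{i\le A(t)}V_i$, which has a finite second moment. So $Y^\alpha(t)\to\sum_{i\le A(t)}V_i$ in $L^2$, and the mean and variance converge with $p=1$.

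\textbf{Case $\rho>1$.} We use the assumption $\alpha Z^\alpha(0)\Rightarrow\xi_\rho$. Over the fixed window, $|Z^\alpha(u)-Z^\alpha(0)|\le t+\sum_{i\le A(t)}V_i$, which is $O_P(1)$, so $\sup_{u\le t}|\alpha Z^\alpha(u)-\xi_\rho|\to0$ in probability. By continuity of $F$, $\sup_{i\le A(t)}|\bar F(\alpha W_i^\alpha)-1/\rho|\to0$ in probability. Next we couple. Let $\{\eta_i\}$ be i.i.d.\ Bernoulli$(1/\rho)$, realised as $\eta_i=\mathds{1}\{\bar F(\tilde D_i)>\ldots\}$; concretely, use $\tilde D_i=\alpha D_i$ and set $\eta_i\triangleq\mathds{1}\{\tilde D_i>\xi_\rho\}$. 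Then $\chi_i^\alpha=\mathds{1}\{\tilde D_i>\alpha W_i^\alpha\}$ differs from $\eta_i$ only if $\tilde D_i$ lies between $\alpha W_i^\alpha$ and $\xi_\rho$. Fix $\epsilon>0$. On the event $\{\sup_i|\alpha W_i^\alpha-\xi_\rho|\le\epsilon\}$, the number of discrepancies is at most $\#\{i\le A(t):|\tilde D_i-\xi_\rho|\le\epsilon\}$, whose mean is at most $\lambda t\,\|f\|_\infty 2\epsilon$. The $\tilde D_i$ are independent of $A$ by Assumption~\ref{assumption}. Letting $\alpha\downarrow0$ and then $\epsilon\downarrow0$ gives $\sum_{i\le A(t)}V_i|\chi_i^\alpha-\eta_i|\to0$ in probability, using the bounded density $f$. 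Domination by $\sum_{i\le A(t)}V_i\in L^2$ then yields $L^2$ convergence of $Y^\alpha(t)$ to $\sum_{i\le A(t)}V_i\eta_i$. The $\eta_i$ are independent of $(A,V)$, which is exactly the Bernoulli-thinned object computed above.

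\textbf{Main obstacle.} The subtle point is independence. Since $W_i^\alpha$ depends on past $\tilde D_j$ and on $A$, the $\chi_i^\alpha$ are not independent. The coupling sidesteps this: the discrepancy count is controlled by the $\tilde D_i$ alone, through their bounded density, uniformly over where the $W_i$ sit within an $\epsilon$-band. A secondary care point is that the stationary arrival process under $\E_e$ is the stationary renewal process, so $\Var(A(t))=\lambda tI_a(t)$ as required.
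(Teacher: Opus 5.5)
Your proof is correct, and it takes a different route from the paper. The paper uses a martingale-difference decomposition. It sets $p_i^\alpha=\bar F(\alpha W_i^\alpha)$ and uses a filtration $\mathcal H_i^\alpha$ that contains the whole arrival process, $Z^\alpha(0)$, and earlier marks. It writes $Y^\alpha(t)=P_\alpha(t)/\mu+M_\alpha(t)$ with $P_\alpha(t)=\sum_{i\le A(t)}p_i^\alpha$, where $M_\alpha$ has conditional second moments $\bigl((1+c_s^2)p_i^\alpha-(p_i^\alpha)^2\bigr)/\mu^2$. From $\max_{i\le A(t)}|p_i^\alpha-p|\to0$ it shows $P_\alpha(t)-pA(t)\to0$ in $L^2$, computes the limit of $\E[M_\alpha(t)^2]$, and removes the cross term via $\E[M_\alpha(t)\mid\mathcal A]=0$. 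You instead couple each indicator with $\eta_i=\mathds{1}\{\alpha D_i^\alpha>\xi_\rho\}$ (with $\eta_i\equiv1$ when $\rho<1$). You then show that, with probability tending to one, the effective input on $(0,t]$ coincides exactly with an independent Bernoulli$(p)$-thinned marked renewal input. $L^2$ convergence then follows from domination by $\sum_{i\le A(t)}V_i$, whose law does not depend on $\alpha$. Your route is more elementary and slightly more robust. The $\rho>1$ case needs no filtration or orthogonality argument, only that the patience times of customers arriving in $(0,t]$ are i.i.d.\ and independent of $A$. In the $\rho<1$ case, your step $\E_e[\sum_{i\le A(t)}(1-\chi_i^\alpha)]=\E_e[\sum_{i\le A(t)}F(\alpha W_i^\alpha)]$ does use the conditional independence of $D_i^\alpha$ from $W_i^\alpha$. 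You could avoid it with the same pathwise bound: a discrepancy forces $\alpha D_i^\alpha\le\alpha\sup_{u\le t}Z^\alpha(u)$. Your argument also gives convergence of every uniformly integrable functional of $Y^\alpha(t)$, not just the first two moments. The paper's decomposition, in exchange, gives an exact finite-$\alpha$ identity that isolates the departure from Bernoulli thinning in the predictable term $P_\alpha(t)-pA(t)$. It also mirrors the compensator-based thinning decomposition used for the drift surrogate and in the proof of Theorem~\ref{thm:HT_limit}.
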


\paragraph{Heuristic interpolation used by the refined RQ algorithm.}
Corollary~\ref{thm:HT_var} and Lemma~\ref{lm:var_lim} are proved endpoints, but neither supplies a uniform approximation over all horizons.
We connect them with the following explicit heuristic.
Define
\begin{equation}\label{eq:IDW_refined_RQ}
    \hat I_w(s)
    \triangleq
    \frac{I_a(s)}{\rho\vee1}
    +\left(1-\frac{1}{\rho\vee1}\right)
    +c_s^2.
\end{equation}
For fixed $s$ and $\rho\ne1$, Lemma~\ref{lm:var_lim} gives
$I_w^{\mathrm{ab}}(s)\to\hat I_w(s)$ as $\alpha\downarrow0$.
Near critical loading on the growing heavy-traffic horizon, the first factor approaches $c_x^2$, and Corollary~\ref{thm:HT_var} supplies the attenuation factor $w$.
We therefore use
\begin{equation}\label{eq:IDW_effective_approx}
    I_w^{\mathrm{ab}}(s)
    \approx
    \hat I_w(s)
    w_{\tilde c_\alpha,k}\bigl(\alpha^{2h}\tau s\bigr).
\end{equation}
Equation~\eqref{eq:IDW_effective_approx}, not the two endpoint limits themselves, is the finite-system modeling approximation used below.
It is consistent with the heavy-traffic endpoint because
$\hat I_w(\infty)\to I_a(\infty)+c_s^2=c_x^2$ as $\rho\to1$.

The factorization has a useful interpretation.
The term $\hat I_w(s)$ describes variability from finite-horizon arrival counts, Bernoulli-type effective-arrival thinning, and service times.
Within it,
\[
    \frac{I_a(s)}{\rho\vee1}
    +\left(1-\frac{1}{\rho\vee1}\right)
\]
is the fixed-horizon limiting IDC of effective arrivals: a load-dependent convex combination of the original IDC and the Poisson value $1$.
The factor $w_{\tilde c_\alpha,k}$ then applies the additional dynamic reduction caused by state-dependent abandonment feedback.

\begin{example}
To illustrate the interpolation, we consider the $H_2(4)/M/1{+}M$ and $H_2(4)/M/1{+}E_2$ models.
Here $H_2(4)$ denotes a balanced-means hyperexponential distribution with SCV $4$, and $E_2$ denotes an Erlang distribution with shape parameter $2$.
For $E_2$, the patience-time CDF satisfies Assumption~\ref{assumption:F} with $k=2$, and hence $h=2/3$.
We set $\mu=1$ and $\lambda_{\alpha}=\rho_{\alpha}=1+c\alpha^{2/3}$ with $c=2$ and $\alpha=2^{-j}$ for $j\in\{0,3,6,9,12\}$.
Figure~\ref{fig:Var_approx} compares simulation estimates of the effective IDW defined in \eqref{eq:effective_IDW} (solid curves) with the approximation in \eqref{eq:IDW_effective_approx} (dashed curves).
These estimates obtained from a single simulation run with a warm-up period of $10^7$ time units and a data-collection period of $3\times 10^8$ time units, using the IDW estimation procedure described in \citep[Section~2.1.2]{whitt2022robust}.
The variance-reduction function $w_{\tilde c_\alpha,2}(\cdot)$ is evaluated using the procedure described in Remark~\ref{rmk:est_w}.
The comparison indicates how accurately the interpolation joins the fixed-horizon and growing-horizon regimes over the displayed patience levels.
\begin{figure}[htbp]
    \centering
    \includegraphics[width=0.495\textwidth]{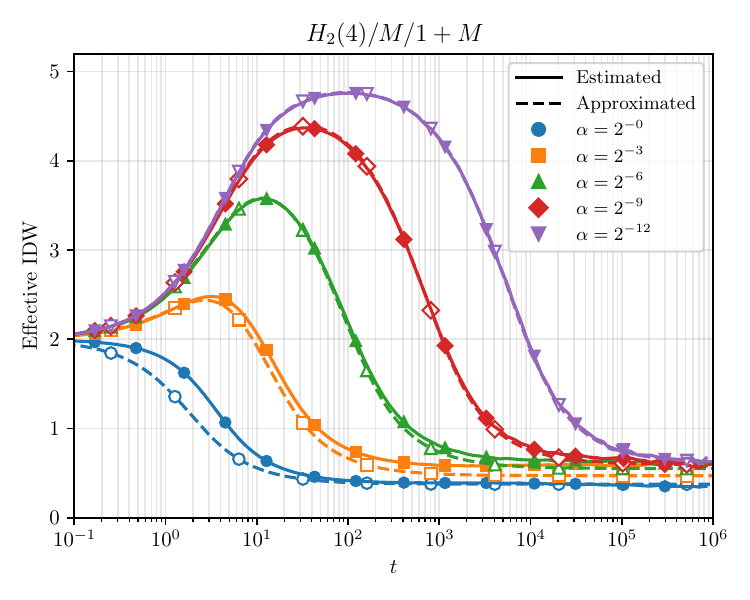}
    \includegraphics[width=0.495\textwidth]{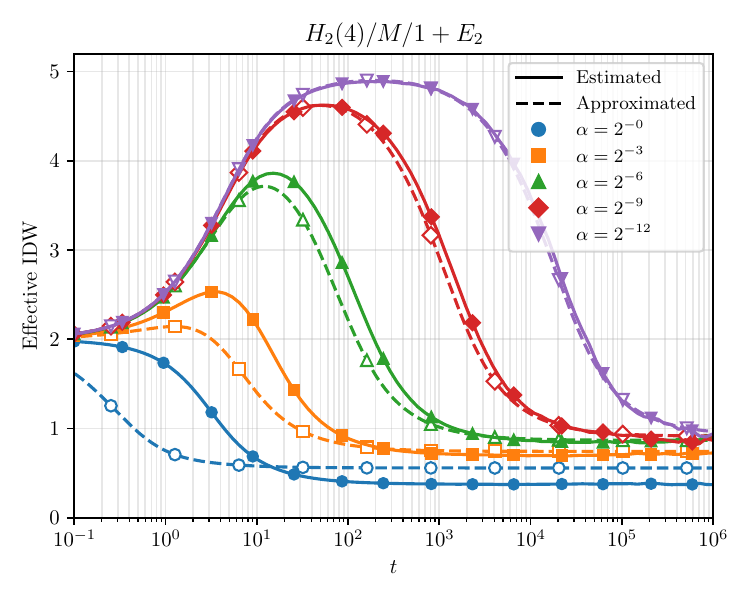}
    \caption{Simulation estimates (solid) and approximations \eqref{eq:IDW_effective_approx} (dashed) of the effective IDW in the $H_2(4)/M/1{+}M$ model (left), and the $H_2(4)/M/1{+}E_2$ model (right) with $\mu=1$, $c=2$, and $\alpha=2^{-j}$ for $j\in\{0,3,6,9,12\}$.}
    \label{fig:Var_approx}
\end{figure}
\end{example}

\subsection{Robust Queueing Algorithm}\label{sec:RQ_ab2_algo}

With the drift approximation in \eqref{eq:mean_drift} and the heuristic effective-IDW surrogate \eqref{eq:IDW_effective_approx}, we now propose a refined robust queueing formulation for the virtual waiting time.

The definition of $\Lambda_t(s)$ in \eqref{eq:Lambda} depends on the state process $Z(\cdot)$, so we apply the same stationary self-consistency step as in Section~\ref{sec:RQ_ab1_algo}: with $z$ a deterministic trial approximation to the mean stationary virtual waiting time, we replace $\bar F_\alpha(Z(u-))$ by $\bar F_\alpha(z)$ over the reverse-time interval, giving the drift surrogate $\lambda\bar F_\alpha(z)s$ and the mean approximation $\E_e[Y^\alpha(s)]\approx\lambda\bar F_\alpha(z)s/\mu$.
Combining this with the effective-IDW approximation \eqref{eq:IDW_effective_approx} gives the stationary drift and variance surrogates
\begin{equation}\label{eq:m_v2_def}
    m_z(s) = \bigl(\rho\bar F_\alpha(z)-1\bigr)s,
    \qquad v^{(2)}_z(s)
    \triangleq \hat I_w(s) w_{\tilde c_\alpha,k}\bigl(\alpha^{2h}\tau s\bigr) \frac{\lambda \bar F_{\alpha}(z)s}{\mu^2}.
\end{equation}

The resulting RQ surrogate for the effective net-input increment $N(t)-N(t-s)$, evaluated at the trial value $z$, is
\begin{equation}\label{eq:rq_ab2}
    m_z(s)+b\sqrt{v^{(2)}_z(s)}.
\end{equation}
Substituting \eqref{eq:rq_ab2} into the stationary RQ formulation \eqref{eq:RQ_raw_formulation} and imposing self-consistency $z=Z_{\mathrm{RQ}_2}$ gives the refined stationary RQ fixed-point equation
\begin{align}
    Z_{\mathrm{RQ}_2}
    &= \sup_{s\ge 0} \left\{ m_{Z_{\mathrm{RQ}_2}}(s)+b\sqrt{v^{(2)}_{Z_{\mathrm{RQ}_2}}(s)} \right\} \notag\\
    &= \sup_{s\ge 0} \left\{ \bigl(\rho\bar F_{\alpha}(Z_{\mathrm{RQ}_2})-1\bigr)s +b \sqrt{ \hat I_w(s) w_{\tilde c_\alpha,k}\bigl(\alpha^{2h}\tau s\bigr) \frac{\lambda \bar F_{\alpha}(Z_{\mathrm{RQ}_2})s}{\mu^2} } \right\}. \label{eq:RQ_ab_2}
\end{align}
Equation \eqref{eq:RQ_ab_2} defines the refined steady-state RQ approximation directly as a scalar fixed point.

By arguments analogous to those in Remark~\ref{rem:RQ1_existence}, \eqref{eq:RQ_ab_2} admits a unique finite solution under Assumption~\ref{assumption}.
Indeed, $z\mapsto\bar F_\alpha(z)$ is continuous, strictly decreasing, and vanishes at infinity, while $s\mapsto\hat I_w(s)\,w_{\tilde c_\alpha,k}(\alpha^{2h}\tau s)$ is continuous and bounded, and is bounded away from zero for all sufficiently large $s$ because $\hat I_w(s)>0$ and $w_{\tilde c_\alpha,k}(\alpha^{2h}\tau s)\to w_{\tilde c_\alpha,k}(\infty)>0$ by Proposition~\ref{prop:w}.

We denote the unique solution by $Z_{\mathrm{RQ}_2,b}$.
In practice, $Z_{\mathrm{RQ}_2,b}$ can be computed efficiently by bisection.
We discuss calibration of the robustness parameter $b$ in Section~\ref{sec:calibrate_refined_RQ}.

\subsection{Heavy-Traffic Limit for Robust Queueing}

Recall the threshold $h$ defined in \eqref{eq:threshold}.
We study the scaling of the refined RQ fixed point in the long-patience heavy-traffic regime $\alpha\downarrow 0$ with $\rho_{\alpha}\to 1$.

\begin{theorem}[Heavy-traffic limit for refined RQ]\label{Thm:RQ_refined_HT}
Consider the heavy-traffic family described above under Assumptions~\ref{assumption} and~\ref{assumption:F}.
Fix $b>0$ and $\mu>0$, and let $\lambda_{\alpha}=\rho_{\alpha}\mu$, with $\alpha^{-\gamma}(\rho_{\alpha}-1)\to c$ for some $\gamma>0$ and $c\in \R$.
Recall $c_x^2$ from Assumption~\ref{assumption} (and $\beta$ from Assumption~\ref{assumption:F}).
For each $\alpha>0$, let $Z_{\mathrm{RQ}_2,b}^\alpha$ denote the unique finite solution to the refined steady-state RQ equation \eqref{eq:RQ_ab_2}.
\begin{enumerate}
\item \textbf{Underloaded.}
If $0<\gamma<h$ and $c<0$, then
\[
    \lim_{\alpha\downarrow 0}(-c)\mu\alpha^\gamma Z_{\mathrm{RQ}_2,b}^\alpha
    = \lim_{\alpha\downarrow 0}\frac{Z_{\mathrm{RQ}_2,b}^\alpha}{\E[Z_{M/M/1}]} = \frac{b^2}{2}\frac{c_x^2}{2},
\]
where $\E[Z_{M/M/1}] = \rho_{\alpha}\bigl(\mu(1-\rho_{\alpha})\bigr)^{-1}$.

\item \textbf{Critically loaded.}
If $\gamma\ge h$, then there exists a finite constant $\hat Z_{\mathrm{RQ}_2,b}>0$ such that
\[
    \lim_{\alpha\downarrow 0}\alpha^h Z_{\mathrm{RQ}_2,b}^\alpha=\hat Z_{\mathrm{RQ}_2,b}.
\]
Moreover, $\hat Z_{\mathrm{RQ}_2,b}$ is the unique positive solution to
\begin{equation}\label{eq:RQ_HT_equation_general}
    \hat Z_{\mathrm{RQ}_2,b}
    = \sup_{u\ge 0} \left\{ \left(\mathds{1}\{\gamma=h\}c-\beta\hat Z_{\mathrm{RQ}_2,b}^k\right)u + b\sqrt{\frac{c_x^2}{\mu}w_{\tilde c_\gamma,k}(\tau u)u} \right\},
\end{equation}
where
\[
    \tilde c_\gamma \triangleq \mathds{1}\{\gamma=h\}c \left(\frac{c_x^2}{2\mu}\right)^{-k/(k+1)} \beta^{-1/(k+1)}
    = \lim_{\alpha\downarrow0}\tilde c_\alpha .
\]

\item \textbf{Overloaded.}
If $0<\gamma<h$ and $c>0$, then
\[
    \lim_{\alpha\downarrow 0}\alpha^{1-\gamma/k} Z_{\mathrm{RQ}_2,b}^\alpha = \left(\frac{c}{\beta}\right)^{1/k}.
\]
In particular, the leading-order limit is independent of $b$ and of the arrival and service variability parameters, and it depends on the patience-time distribution only through $F^{(k)}(0)$.
Equivalently,
\[
    \lim_{\alpha\downarrow 0}\frac{\rho_{\alpha}}{\rho_{\alpha}-1}F\bigl(\alpha Z_{\mathrm{RQ}_2,b}^\alpha\bigr)=1.
\]
\end{enumerate}
\end{theorem}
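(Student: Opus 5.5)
We reduce the refined fixed point \eqref{eq:RQ_ab_2} to the same scalar structure used for Theorem~\ref{Thm:RQ_HT} and treat the three regimes through a common scaling argument. Write $z=Z^\alpha_{\mathrm{RQ}_2,b}$ and set $\theta_\alpha(z)\triangleq 1-\rho_\alpha\bar F(\alpha z)=(1-\rho_\alpha)+\rho_\alpha F(\alpha z)$. The objective is then $-\theta_\alpha(z)s+b\sqrt{\rho_\alpha\bar F(\alpha z)\,g_\alpha(s)\,s/\mu}$, with $g_\alpha(s)\triangleq\hat I_w(s)w_{\tilde c_\alpha,k}(\alpha^{2h}\tau s)$.

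Two facts about $g_\alpha$ will be used repeatedly. First, $g_\alpha$ is uniformly bounded: $\hat I_w\le\sup I_a^{(1)}/(\rho\vee1)+1+c_s^2$ and $w\le1$ by Proposition~\ref{prop:w}. Second, on any growing horizon $s_\alpha\to\infty$ we have $\hat I_w(s_\alpha)\to c_x^2$, because $I_a(s)=I_a^{(1)}(\lambda s)\to c_a^2$ and $\rho_\alpha\to1$. Heuristically, the maximizer sits at $s^*\asymp \theta^{-2}$ and the supremum is $\asymp b^2 c_x^2/(4\mu\theta)$, so self-consistency reads $z\theta_\alpha(z)\approx b^2c_x^2 w/(4\mu)$.

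For the critical case $\gamma\ge h$, substitute $z=\alpha^{-h}\hat z$ and $s=\alpha^{-2h}u$. Then $\alpha^{-h}\theta_\alpha(\alpha^{-h}\hat z)\to \beta\hat z^k-\mathds 1\{\gamma=h\}c$, using $F(x)=\beta x^k+o(x^k)$ and $\alpha^{1-h}=\alpha^{h/k}$, so that $F(\alpha^{1-h}\hat z)\alpha^{-h}\to\beta\hat z^k$. Also $\alpha^{2h}\tau s=\tau u$, and $\tilde c_\alpha\to\tilde c_\gamma$. Multiplying the objective by $\alpha^h$, we obtain for fixed $u$ the limit objective in \eqref{eq:RQ_HT_equation_general}, with $g_\alpha(\alpha^{-2h}u)\to c_x^2 w_{\tilde c_\gamma,k}(\tau u)$ by the joint continuity in Proposition~\ref{prop:w}(3) together with $\hat I_w\to c_x^2$. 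Convergence is locally uniform in $(\hat z,u)$ on compacts where the linear coefficient is strictly negative.

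To pass to the supremum we need tightness of the maximizers. The bound $g_\alpha\le C$ gives the uniform majorant $-\theta u+b\sqrt{Cu/\mu}$, which localizes $u$ to a compact set whenever $\theta$ is bounded away from $0$. The small-$u$ region contributes $O(\sqrt{u})$ and is harmless. This yields convergence of the scaled map $\hat\Psi_\alpha(\hat z)\to\hat\Psi(\hat z)$, locally uniformly on $\{\hat z:\beta\hat z^k>\mathds 1\{\gamma=h\}c\}$. Since $\hat\Psi$ is nonincreasing, $\hat\Psi(\hat z)\to\infty$ at the left endpoint of that region, and $\hat\Psi(\hat z)\to0$ as $\hat z\to\infty$, the function $\hat z-\hat\Psi(\hat z)$ is strictly increasing with a unique root. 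Because the $\alpha\hat z-\hat\Psi_\alpha$ are strictly monotone, their roots converge to that root.

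\textbf{Main obstacle.} The hardest step is proving tightness of $\alpha^hz$ itself, ruling out both $\hat z_\alpha\to\infty$ and $\hat z_\alpha$ collapsing onto the boundary. For the upper side, plug in a large $\hat z$: the majorant gives $\hat\Psi_\alpha(\hat z)\le b^2C/(4\mu\alpha^{-h}\theta)$, which is small, so the root lies below $\hat z$. For the lower side, use the lower bound $g_\alpha\ge c>0$ on horizons of order $\alpha^{-2h}$; this holds because $w(\infty)>0$, since $w$ is decreasing with positive limit. As the linear coefficient tends to $0$, this gives $\hat\Psi_\alpha\to\infty$.

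The underloaded case $\gamma<h$, $c<0$ follows the same route, now with scale $z=\alpha^{-\gamma}\hat z$ and $s=\alpha^{-2\gamma}u$. Here $\theta\sim(-c)\alpha^\gamma$, because $F(\alpha^{1-\gamma}\hat z)=O(\alpha^{k(1-\gamma)})=o(\alpha^\gamma)$, which is exactly the condition $\gamma<h$. We also have $\alpha^{2h}\tau s=\alpha^{2(h-\gamma)}\tau u\to0$, so $w\to w(0)=1$ uniformly on compacts by continuity. The limit problem is $\sup_u\{cu+b\sqrt{c_x^2u/\mu}\}=b^2c_x^2/(4\mu(-c))$, giving the stated constant; the ratio with $\E[Z_{M/M/1}]$ is immediate.

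In the overloaded case $c>0$, $\gamma<h$, set $z=\alpha^{-(1-\gamma/k)}\hat z$. Then $\theta=\alpha^\gamma(\beta\hat z^k-c)+o(\alpha^\gamma)$, and the supremum is at most $b^2C/(4\mu\theta)=O(\alpha^{-\gamma})$. This is $o(\alpha^{-(1-\gamma/k)})$ precisely because $\gamma<h$ means $\gamma<1-\gamma/k$. Self-consistency therefore forces $\beta\hat z^k-c\to0$. The lower bound on $\hat z$ uses $\Psi\ge0$ together with the fact that the fixed point must lie in $\{\theta>0\}$, while the upper bound uses the $o(\cdot)$ estimate. This gives the $b$-independent limit $(c/\beta)^{1/k}$ and the equivalent statement $\rho_\alpha F(\alpha z)/(\rho_\alpha-1)\to1$.
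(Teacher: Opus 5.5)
Your underloaded step has a genuine gap. You assert that $w_{\tilde c_\alpha,k}(\alpha^{2(h-\gamma)}\tau u)\to w(0)=1$ uniformly on compacts ``by continuity''. In this regime, however, the load index diverges. With $c_\alpha=\alpha^{-\gamma}(\rho_\alpha-1)\to c<0$, definition \eqref{eq:ctilde_alpha} gives $\tilde c_\alpha=\alpha^{\gamma-h}c_\alpha(c_x^2/(2\mu))^{-k/(k+1)}\beta^{-1/(k+1)}\to-\infty$, because $\gamma<h$.

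Proposition~\ref{prop:w}(3) gives joint continuity on $\R\times[0,\infty)$, which is uniform only over compact $c$-sets. It therefore says nothing along the sequence $(\tilde c_\alpha,\alpha^{2(h-\gamma)}\tau u)$, and the shrinking time argument does not help without uniformity in $c$.

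The repair, which is what the paper does, uses parts (2) and (4) instead. Since $t\mapsto w_{c,k}(t)$ decreases from $1$ to $w_{c,k}(\infty)$, you get $\sup_{t\ge0}|w_{\tilde c_\alpha,k}(t)-1|\le1-w_{\tilde c_\alpha,k}(\infty)$. The right-hand side tends to $0$ by $\lim_{c\to-\infty}w_{c,k}(\infty)=1$. (Alternatively, the bound $1-w_{c,k}(t)\le t\,\E_{\pi_{c,k}}[q_k(Z)]$ from the proof of Proposition~\ref{prop:w} can be made uniform over $c\le c_0$, but the paper does not state this.) With this replacement, the variance term converges to $\sqrt{c_x^2u/\mu}$ uniformly on compact $u$-sets, and the rest of your underloaded argument goes through.

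Apart from this, your argument is correct and differs from the paper mainly in the critical case.

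You locate the prelimit root by bracketing. Both the linear coefficient and the factor $\bar F(\alpha z)$ under the square root are nonincreasing in $z$, so $\hat z-\hat\Psi_\alpha(\hat z)$ is strictly increasing. Pointwise convergence of $\hat\Psi_\alpha$ at $\hat z^\ast\pm\epsilon$, inside the region where $\mathds{1}\{\gamma=h\}c-\beta\hat z^k<0$, then traps the root. This makes a separate tightness argument unnecessary.

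The paper instead proves boundedness of $\alpha^hZ^\alpha_{\mathrm{RQ}_2,b}$ by contradiction from a lower bound on the scaled drift. It then passes to subsequential limits with the localized-suprema lemma and identifies the limit through uniqueness of the positive solution of \eqref{eq:RQ_HT_equation_general}. That route does not use monotonicity in $z$ of the prelimit map. Yours is shorter but needs continuity of $\hat\Psi$ on the region to produce the limiting root; this is easy because the localized limit objective is jointly continuous.

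Two small corrections. First, when $\gamma=h$ and $c<0$ the region is all of $[0,\infty)$ and $\hat\Psi(0)$ is finite, so the blow-up at the left endpoint should be replaced by $\hat\Psi(0)>0$. Second, ``$\alpha\hat z-\hat\Psi_\alpha$'' should read $\hat z-\hat\Psi_\alpha$.

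Your overloaded argument, via $\Psi\le b^2C/(4\mu\theta)$ and $\theta>0$ at the fixed point, is essentially the paper's coefficient-squeeze lemma.
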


Theorem~\ref{Thm:RQ_refined_HT} is proved together with Theorem~\ref{Thm:RQ_HT} in Section~\ref{sec:proof_RQ_HT}.
The underloaded and overloaded limits in Theorem~\ref{Thm:RQ_refined_HT} coincide with those of the first RQ algorithm in Theorem~\ref{Thm:RQ_HT}.
The key difference lies in the critically-loaded regime.
Both algorithms yield the same scaling $Z_{\mathrm{RQ}}^\alpha=O(\alpha^{-h})$, but the refined limit depends nontrivially on the variance-reduction function $w_{\tilde c_\gamma,k}$.

\subsubsection{Control of the Mean Palm Correction}\label{sec:control_palm}

As noted in Remark~\ref{rem:control_palm}, the linear bound \eqref{eq:palm_linear_bound} must be complemented by control on the horizons selected by the RQ suprema.
The following proposition provides this control on the optimizer scales of Theorems~\ref{Thm:RQ_HT} and~\ref{Thm:RQ_refined_HT}.

\begin{proposition}[Mean Palm correction on RQ optimizer scales]\label{prop:palm_optimizer_scales}
For every fixed $M<\infty$, the mean correction is negligible at the workload and horizon scales used in Theorems~\ref{Thm:RQ_HT} and~\ref{Thm:RQ_refined_HT}:
\begin{align*}
    (1-\rho_\alpha)\sup_{0\le s\le M(1-\rho_\alpha)^{-2}}|\E[\delta_\alpha(s)]|&\longrightarrow0, && c<0,\ \gamma<h,\\
    \alpha^h\sup_{0\le s\le M\alpha^{-2h}}|\E[\delta_\alpha(s)]|&\longrightarrow0, && \gamma\ge h,\\
    \alpha^{1-\gamma/k}\sup_{0\le s\le M\alpha^{-1-(k-1)\gamma/k}}|\E[\delta_\alpha(s)]|&\longrightarrow0, && c>0,\ 0<\gamma<h.
\end{align*}
\end{proposition}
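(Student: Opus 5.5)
The plan is to use only the uniform linear bound \eqref{eq:palm_linear_bound} from Lemma~\ref{lm:correction}, and then check exponents in each of the three regimes. In the heavy-traffic family the standardized interarrival law of $\lambda_\alpha U^\alpha$ is fixed, so $c_a^2$ does not depend on $\alpha$. The base patience CDF $F$ is also fixed, so $\|f\|_\infty<\infty$ does not depend on $\alpha$ either. Hence the constant $C\triangleq\frac{3+c_a^2}{2}\|f\|_\infty$ is uniform over the family (this is the point of Remark~\ref{rem:control_palm}). Since the bound $|\E[\delta_\alpha(s)]|\le C\alpha s$ is nondecreasing in $s$, each supremum over $0\le s\le S_\alpha$ is at most $C\alpha S_\alpha$. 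It therefore suffices to show that $(\text{workload scale})\times C\alpha S_\alpha\to0$ in each regime.

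\textbf{Critical case} ($\gamma\ge h$). Here
\[
\alpha^h\cdot C\alpha\cdot M\alpha^{-2h}=CM\alpha^{1-h},
\]
and $h=k/(k+1)<1$, so this tends to $0$.

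\textbf{Underloaded case} ($c<0$, $\gamma<h$). Since $\alpha^{-\gamma}(1-\rho_\alpha)\to -c>0$, we get
\[
(1-\rho_\alpha)\cdot C\alpha\cdot M(1-\rho_\alpha)^{-2}=\frac{CM\alpha}{1-\rho_\alpha}\sim \frac{CM}{-c}\,\alpha^{1-\gamma}.
\]
This tends to $0$ because $\gamma<h<1$.

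\textbf{Overloaded case} ($c>0$, $0<\gamma<h$). Here
\[
\alpha^{1-\gamma/k}\cdot C\alpha\cdot M\alpha^{-1-(k-1)\gamma/k}=CM\,\alpha^{1-\gamma/k-(k-1)\gamma/k}=CM\,\alpha^{1-\gamma}.
\]
This again tends to $0$ because $\gamma<1$.

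The only step needing care is the uniformity of the constant in \eqref{eq:palm_linear_bound} along the family. The rate $\lambda_\alpha$ varies with $\alpha$, but the bound involves only $c_a^2$, $\|f\|_\infty$ and the factor $\alpha s$, with no explicit $\lambda$. I would verify this by tracing the proof of the bound: in \eqref{eq:Delta_alpha_density} the factor $\lambda$ is paired with $U$, so the Palm moments appear only through the scale-free quantities $\E^0[\lambda U]=1$ and $\E^0[(\lambda U)^2]=1+c_a^2$. This makes the constant rate-free. The bound holds for every stationary queue with abandonment, so no stationarity or tightness of $Z^\alpha$ across the family is needed. Once this is confirmed, the proposition reduces to the three exponent computations above.
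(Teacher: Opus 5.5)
Your proposal is correct and matches the paper's proof. Like the paper, you apply the uniform linear bound \eqref{eq:palm_linear_bound} with $C=(3+c_a^2)\|f\|_\infty/2$ and reduce the three regimes to the vanishing factors $CM\alpha/(1-\rho_\alpha)\sim CM\alpha^{1-\gamma}/(-c)$, $CM\alpha^{1-h}$, and $CM\alpha^{1-\gamma}$, which tend to zero because $\gamma<h<1$. You also check that the constant is rate-free, since $\lambda$ enters only through $\E^0[\lambda U]=1$ and $\E^0[(\lambda U)^2]=1+c_a^2$. This correctly justifies the uniformity over the family that the paper asserts in Remark~\ref{rem:control_palm}.
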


Proposition~\ref{prop:palm_optimizer_scales} is not used to prove the two RQ limits.
It instead verifies, on the localized horizon ranges in their proofs, that the omitted net-input term $\mu^{-1}\E[\delta_\alpha(s)]$ is of smaller order than the scaled workload.

\subsubsection{Calibration of the Parameter \texorpdfstring{$b$}{b}} \label{sec:calibrate_refined_RQ}

In the underloaded regime of Theorem~\ref{Thm:RQ_refined_HT}, setting $b=\sqrt{2}$ recovers the original RQ algorithm in \cite{whitt2018using}.
In the overloaded regime of Theorem~\ref{Thm:RQ_refined_HT}, the value of $b$ is immaterial.

The remaining challenge is the critically loaded case.
In the critically loaded case $\gamma=h$, the limit index is $\tilde c_\gamma=c(c_x^2/(2\mu))^{-k/(k+1)}\beta^{-1/(k+1)}$.
Because of the scale-dependent factor $w_{\tilde c_\gamma,k}(\tau u)$, the fixed-point equation in \eqref{eq:RQ_HT_equation_general} does not yield a closed-form calibration of $b$.
We therefore calibrate $b$ numerically by matching the exact stochastic heavy-traffic limit in \eqref{eq:HT_exact} with the heavy-traffic limit of the refined RQ approximation in \eqref{eq:RQ_HT_equation_general}.
Specifically, we use the $M/M/1{+}M$ model for $k=1$ and the $M/M/1{+}E_k$ model for $k>1$.
This gives a calibrated value of $b$ for each pair $(k,\tilde c_\gamma)$.

In the numerical experiments, we precompute a table of calibrated $b$ values over the required values of $k$ and over a discrete calibration grid for the normalized load index.
For a given stochastic model, we determine $k$ from the patience distribution as the order of the first nonzero derivative at the origin and compute the finite-instance index $\tilde c_\alpha$ from \eqref{eq:ctilde_alpha}, using $\rho=\lambda/\mu$.
We then interpolate the precomputed table in $\tilde c_\alpha$ for each $k$ to obtain the calibrated value of $b$ used in \eqref{eq:RQ_ab_2}.

\section{Numerical Experiments}\label{sec:numerical}

This section evaluates the refined fixed point \eqref{eq:RQ_ab_2} across loading, patience, and variability regimes.
All reported RQ values use the instance-dependent calibration from Section~\ref{sec:calibrate_refined_RQ} and the variance surrogate $v_z^{(2)}$ in \eqref{eq:m_v2_def}.
The first RQ algorithm is retained for exposition and asymptotic comparison; the numerical study focuses on the refined method.
Code and figure-reproduction scripts are available at
\url{https://github.com/cnyouwei/RQ_ab_toolkit}.

\paragraph{Inputs and numerical solution.}
The arrival input is the IDC
$I_a(t)=\Var(A(t))/\E[A(t)]$.
For renewal input, it can be computed by numerical inversion of renewal-transform formulas; for observed non-renewal input, it can be estimated over windows of length $t$ by the sample variance-to-mean ratio of arrival counts \citep[Section~2.1.2]{whitt2022robust}.
For each trial value of $z$, the supremum in \eqref{eq:RQ_ab_2} is evaluated on $800$ logarithmically spaced positive horizons over $[10^{-4},10^8]$, with $s=0$ added.
The outer fixed point is solved by bisection.
The initial bracket is $[0,1]$; the upper endpoint may be doubled at most $80$ times, and the solve uses at most $200$ bisection iterations.

\paragraph{Benchmark approximations.}
We compare refined RQ with three diffusion-based methods.
The Ward--Glynn approximation (WG) \citep{ward2003diffusion,ward2005diffusion} is
$\alpha^{-1/2}\E[\tilde Z(\infty)]$, with the expectation in \eqref{eq:ROU_expectation}, when $f(0)>0$.
When $f(0)=0$, we instead report the hazard-rate scaling approximation of \citet{reed2008approximating}.
We also report the Huang--Gurvich approximation (HG) \citep{huang2018beyond}.
Appendix~\ref{sec:other_methods} summarizes these methods and the minor extension used to apply HG to general arrival processes; that extension does not inherit the guarantee proved for the original $M/GI/1{+}GI$ setting.
The WG and hazard-rate methods are designed primarily for near-critical, long-patience systems, whereas HG directly covers a wider loading range under Poisson arrivals.

\paragraph{Test grid and error metric.}
We normalize the mean service time to one.
The arrival-rate grid is
\[
    \lambda\in
    \{1-2^{-j}:j=1,\ldots,10\}
    \cup
    \{1+2^{-j}:j=-2,-1,\ldots,10\},
\]
and the patience-scale grid is
$\alpha\in\{2^{-j}:j=0,1,\ldots,13\}$.
Because the base patience distribution has mean one, the corresponding mean patience times are
$1/\alpha\in\{1,2,4,\ldots,8192\}$.
The $23\times14=322$ combinations cover underload, near-critical loading, and overload.
Each heat map reports
\[
    \frac{\text{approximation}-\text{reference}}{\text{reference}}.
\]
Blue denotes overestimation and red denotes underestimation; darker shading indicates larger magnitude, and the display is clipped at $\pm30\%$.
For the tractable $M/M/1{+}GI$ models, the reference is the exact stationary mean described below.
For all other single-queue and tandem models, the reference is estimated by Monte Carlo simulation using $250$ independent replications, each with a warm-up period of $10^6$ time units and a data-collection period of $2\times 10^7$ time units.

\subsection{The \texorpdfstring{$M/M/1{+}GI$}{M/M/1+GI} Models}\label{sec:numerical_mg1gi}

The $M/M/1{+}GI$ class provides an exact baseline: its mean stationary virtual waiting time follows from the single-server specialization of \citet[Eq.~(9.9)]{zeltyn2005call}.
Figure~\ref{fig:MM1_GI} considers exponential patience (top row), Erlang-$2$ patience (middle row), and balanced hyperexponential patience with SCV $4$ (bottom row).
We write these distributions as $M$, $E_2$, and $H_2(4)$, respectively.
The exponential and $H_2(4)$ distributions have $k=1$.
For $E_2$, $k=2$, $f(0)=0$, and $f'(0)=4$ under the unit-mean normalization; consequently, the center panel in the middle row uses hazard-rate scaling rather than WG.

\begin{figure}[htbp]
    \centering
    \includegraphics[width=\textwidth]{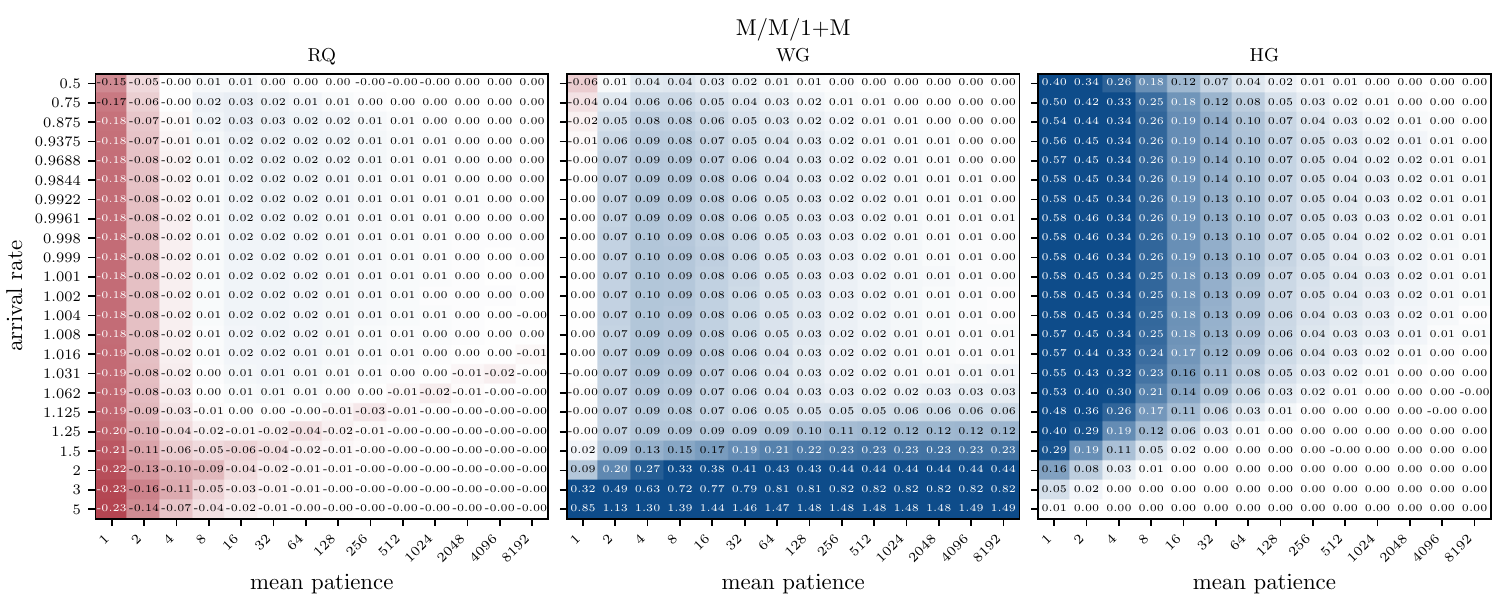}
    \includegraphics[width=\textwidth]{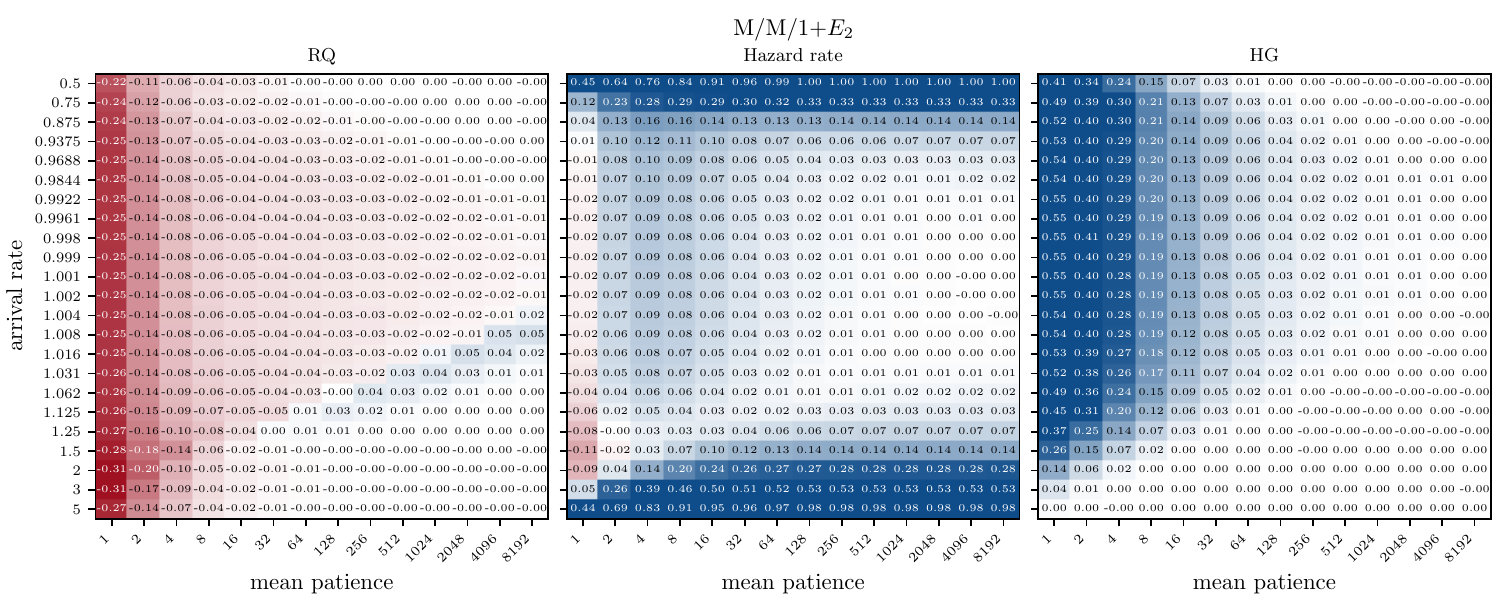}
    \includegraphics[width = \textwidth]{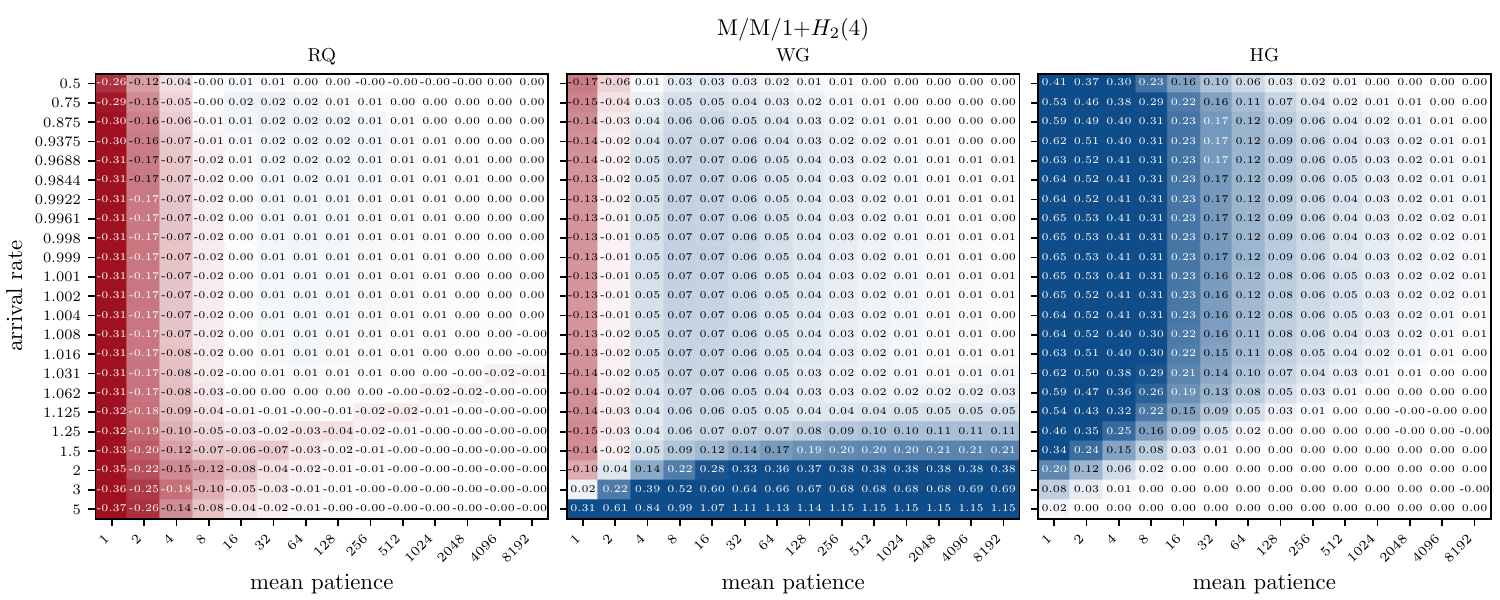}
    \caption{Signed relative errors for refined RQ (left), WG for $f(0)>0$ and hazard-rate scaling for $f(0)=0$ (center), and HG (right). Rows correspond to $M/M/1{+}M$ (top), $M/M/1{+}E_2$ (middle), and $M/M/1{+}H_2(4)$ (bottom).}\label{fig:MM1_GI}
\end{figure}

Figure~\ref{fig:MM1_GI} shows that refined RQ has small error over most of the grid and changes smoothly across loading regimes.
For the models in Figure~\ref{fig:MM1_GI}, the calibration is based only on the critical long-patience limits of $M/M/1{+}M$ and $M/M/1{+}E_2$.
For $H_2(4)$ patience, we use the $k=1$ calibration table; the figure therefore tests whether the local-order calibration transfers to a different patience shape.

For $M/M/1{+}E_2$, the small diagonal sign change under mild overload has displayed errors no larger than about $5\%$.
The WG and hazard-rate approximations are highly accurate near $\rho=1$ with long patience, but can deteriorate in deep underload or overload.
HG is particularly accurate in deep overload, including some short-patience cases, but can overestimate the mean virtual waiting time when patience is short outside deep overload.

The clearest systematic limitation of refined RQ is short patience combined with high abandonment, where it tends to underestimate the mean virtual waiting time.
The error generally decreases as the mean patience time increases.
For mean patience times between $8$ and $32$, most displayed errors are in the single-digit percentage range, although high-variability cases can reach roughly $10\%$--$20\%$ on parts of the grid.
At longer patience times, the displayed errors are generally small, with some residual nonnegligible error in the highest-variability cases.

\subsection{The \texorpdfstring{$GI/GI/1{+}GI$}{GI/GI/1+GI} Models}

When the arrival process is Poisson, steady-state performance is often relatively insensitive to higher-order features of the service-time distribution beyond its mean and variance (provided the third moment is not excessively large).
Consequently, approximation accuracy for $M/GI/1{+}GI$ models typically does not degrade substantially relative to the $M/M/1{+}GI$ baseline.
Figure~\ref{fig:MGI1_GI} illustrates this robustness for lognormal service times: the refined RQ approximation remains accurate for both the $M/LN(1,4)/1{+}H_2(4)$ and $M/LN(1,4)/1{+}E_2$ models over most of the parameter grid.
Here $LN(1,4)$ denotes the lognormal distribution with mean $1$ and SCV $4$.

\begin{figure}[ht]
    \centering
    \includegraphics[width=\textwidth]{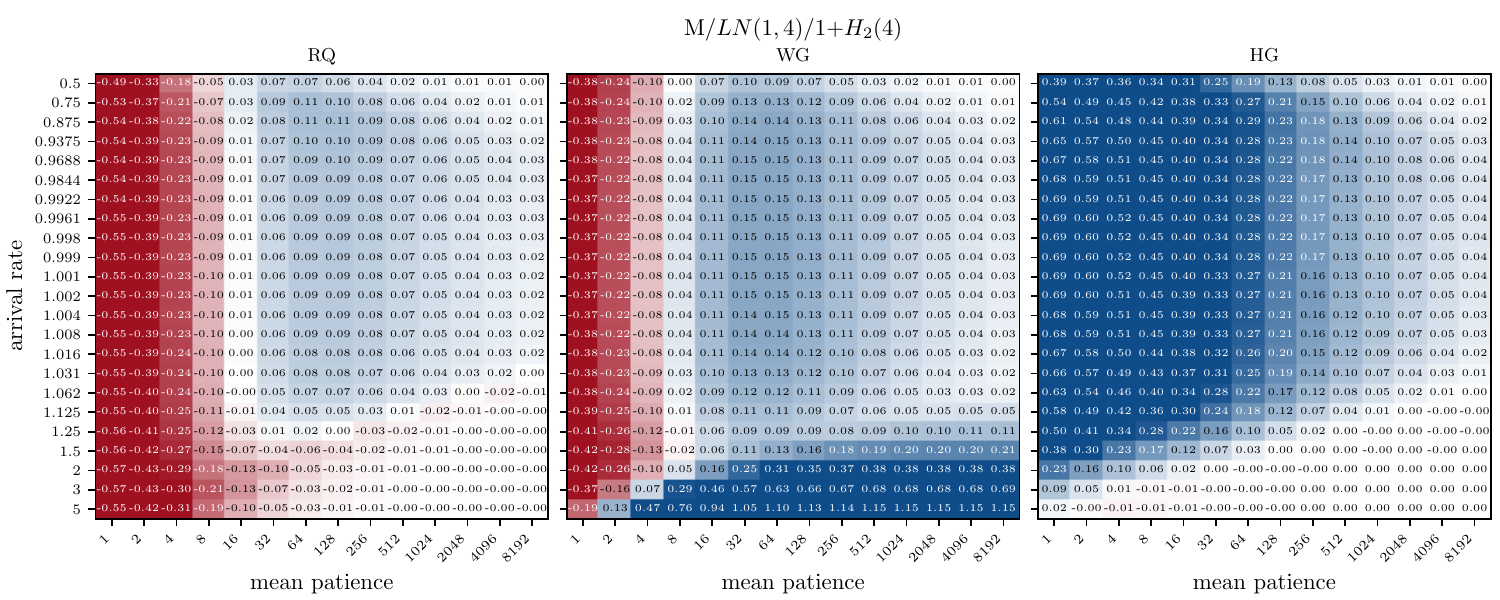}
    \includegraphics[width=\textwidth]{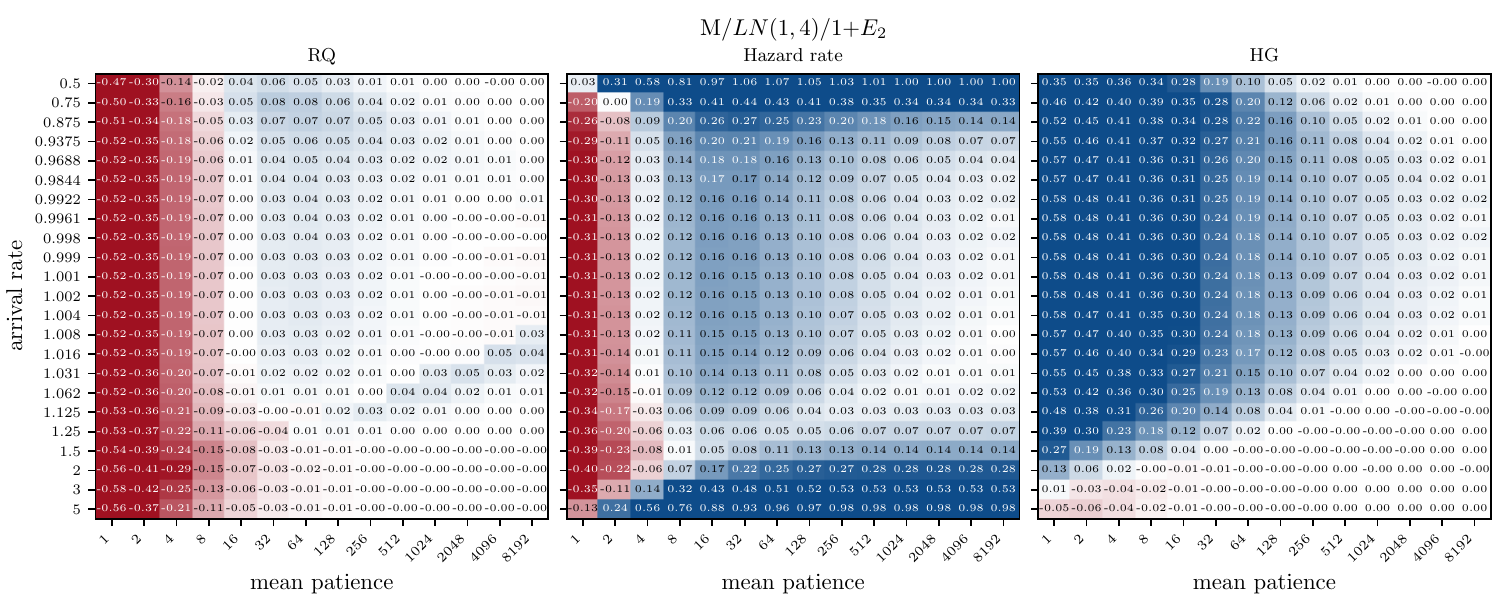}
    \caption{Signed relative error heat maps for the refined RQ approximation (left column), the Ward--Glynn approximation \cite{ward2003diffusion,ward2005diffusion} (center column; replaced by the hazard-rate scaling approximation \cite{reed2008approximating} when $f(0)=0$), and the Huang--Gurvich approximation \cite{huang2018beyond} (right column), for the $M/LN(1,4)/1{+}H_2(4)$ model (top row) and the $M/LN(1,4)/1{+}E_2$ model (bottom row).}
    \label{fig:MGI1_GI}
\end{figure}

Models with non-Poisson renewal arrival processes are usually more challenging, even when the service times are exponential.
Classical approaches often describe renewal input via two moments (rate and SCV $c_a^2$), which is justified by heavy-traffic limits where $c_a^2$ appears explicitly.
Away from heavy traffic, however, the finite-horizon arrival IDC relevant to steady-state performance can differ substantially from its long-run limit $c_a^2$, so a two-moment characterization can be too crude.

Figure~\ref{fig:GIGI1_GI} reports results for the $E_2/LN(1,2)/1{+}E_2$ model (top), the $H_2(4)/LN(1,2)/1{+}H_2(4)$ model (mid), and the $H_2(4)/LN(1,2)/1{+}E_2$ model (bottom).
Here $LN(1,2)$ denotes the lognormal distribution with mean $1$ and SCV $2$.
The benchmark methods behave as expected: the Ward--Glynn and hazard-rate scaling approximations are most accurate near critical loading (where they are theoretically justified), while the Huang--Gurvich approximation can deteriorate when patience times are short.
By contrast, the refined RQ approximation remains stable across regimes.

\begin{figure}[htbp]
    \centering
    \includegraphics[width=\textwidth]{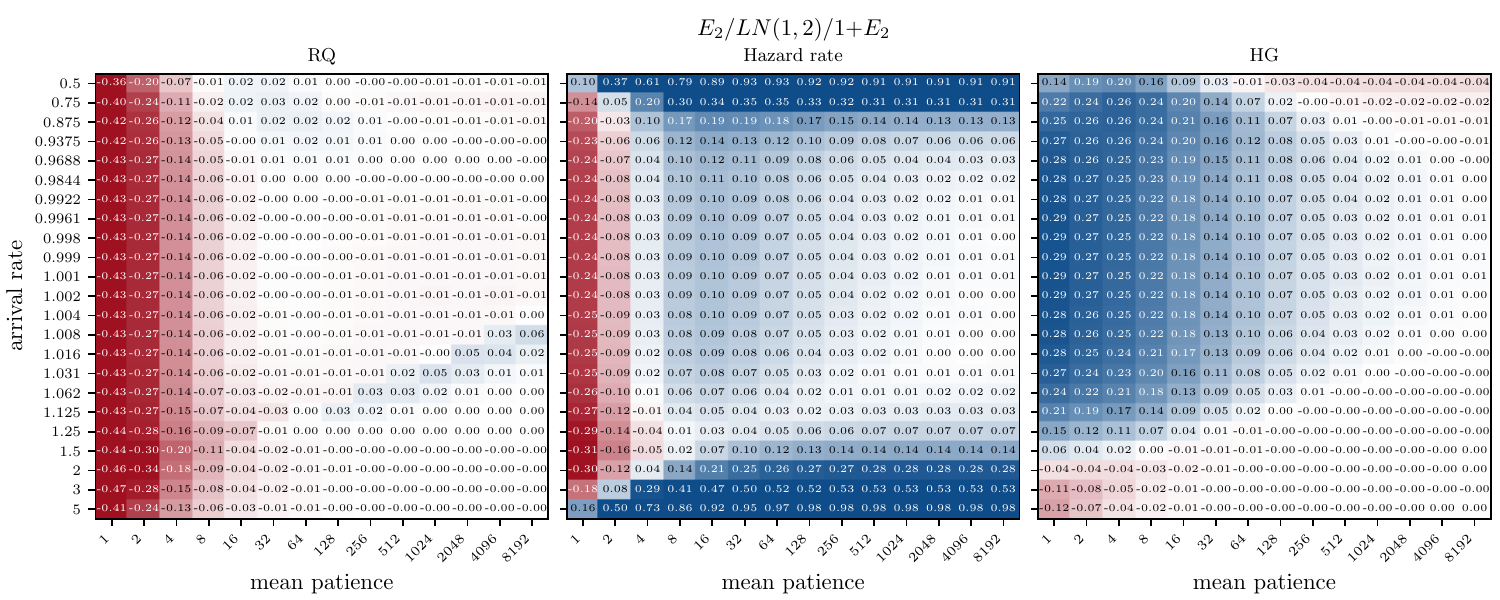}
    \includegraphics[width=\textwidth]{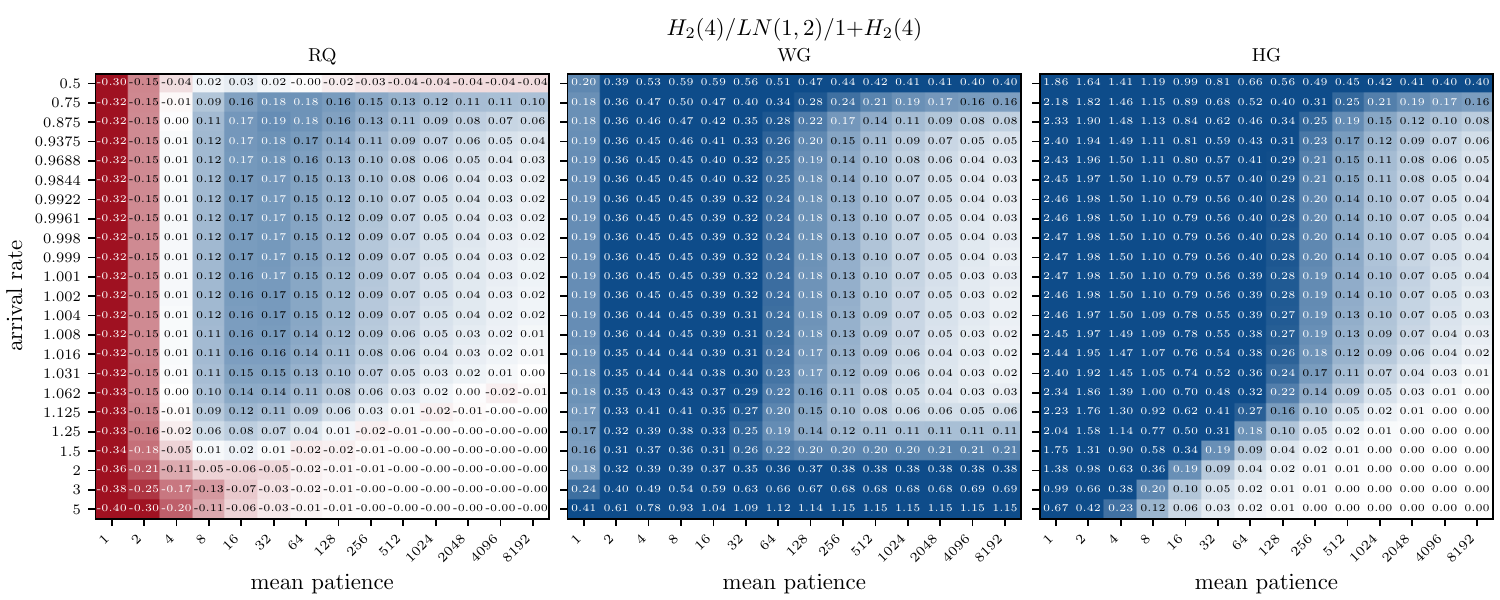}
    \includegraphics[width=\textwidth]{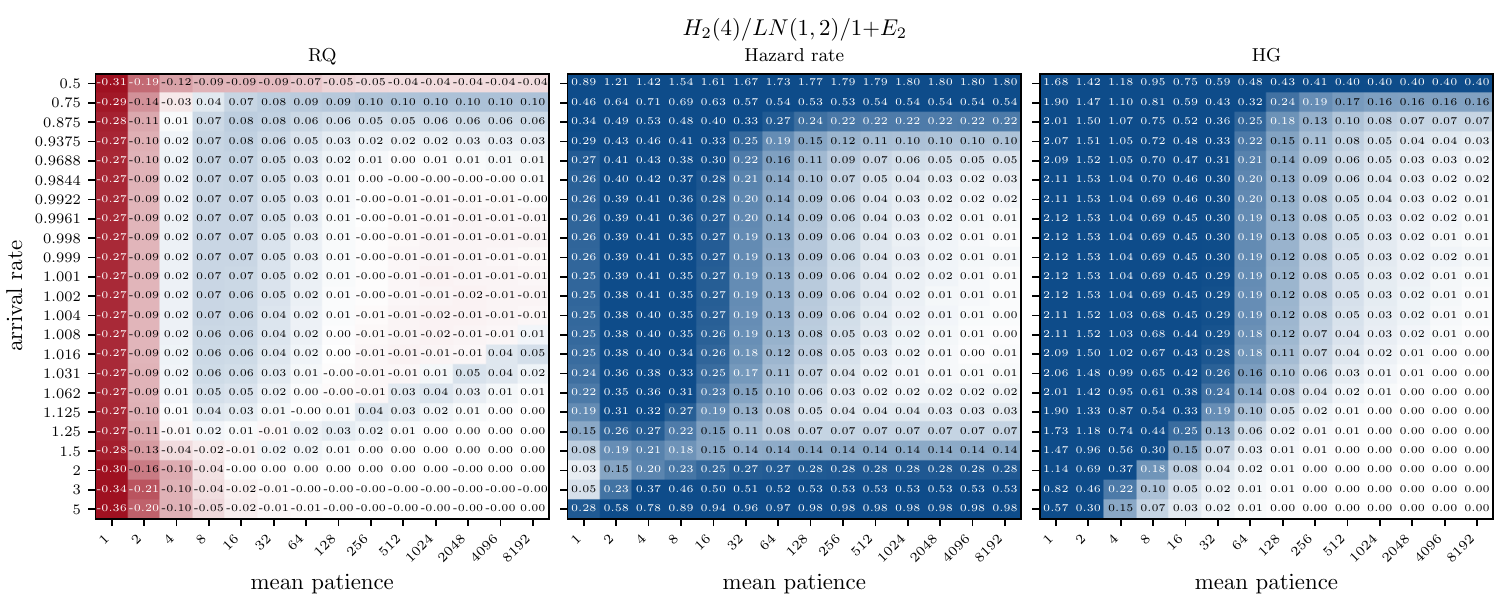}
    \caption{Signed relative error heat maps for the refined RQ approximation (left column), the Ward--Glynn approximation \cite{ward2003diffusion,ward2005diffusion} (center column, when $f(0)>0$; replaced by the hazard-rate scaling approximation \cite{reed2008approximating} when $f(0)=0$), and the Huang--Gurvich approximation \cite{huang2018beyond} (right column), for the $E_2/LN(1,2)/1{+}E_2$ (top row), $H_2(4)/LN(1,2)/1{+}H_2(4)$ (mid row), and $H_2(4)/LN(1,2)/1{+}E_2$ (bottom row) models.}
    \label{fig:GIGI1_GI}
\end{figure}

\subsection{A Heuristic Tandem Extension with Non-Renewal Input}\label{sec:non_renewal}

We close the experiments with a two-node tandem example.
This extension is heuristic: the single-queue theory assumes renewal input, whereas the departure stream feeding the second node is generally non-renewal.
Queue~1 is a stable single-server queue without abandonment; each departure immediately joins Queue~2, where abandonment is allowed.
The target is the mean stationary virtual waiting time at Queue~2.

The extension is possible because the RQ fixed point uses the arrival process through its IDC function, including its long-run limit, rather than through a renewal law directly.
We replace $I_a(t)$ in \eqref{eq:IDW_refined_RQ} by an approximation of the Queue~1 departure IDC obtained from the network propagation method of \citet{whitt2022robust}, and use its long-run limit as $c_{a,2}^2$ in $c_{x,2}^2=c_{a,2}^2+c_{s,2}^2$, $\tilde c_\alpha$, and $\tau$; Appendix~\ref{sec:appendix_tandem} gives the formula.
This substitution preserves finite-horizon dependence that would be lost by reducing the downstream input to only a rate and an asymptotic SCV, but it does not have the same theoretical status as the renewal-input results above.

\begin{figure}[ht]
    \centering
    \includegraphics[width = \textwidth]{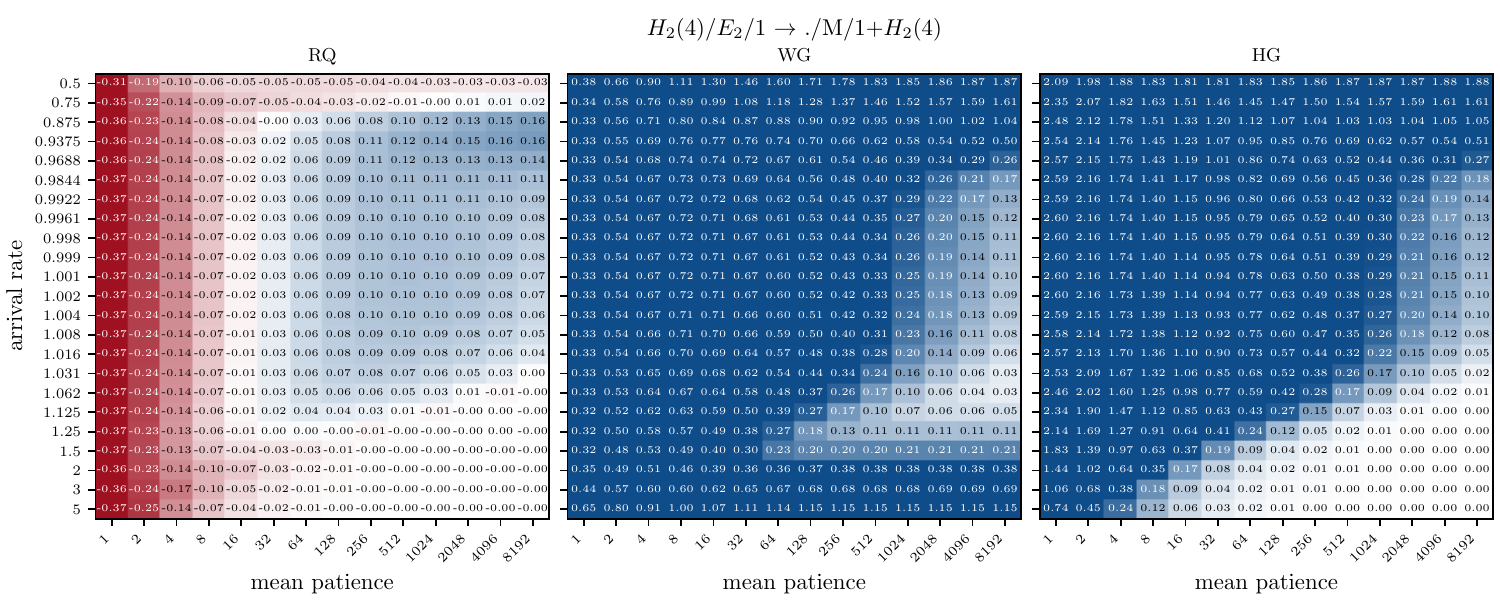}
    \includegraphics[width = \textwidth]{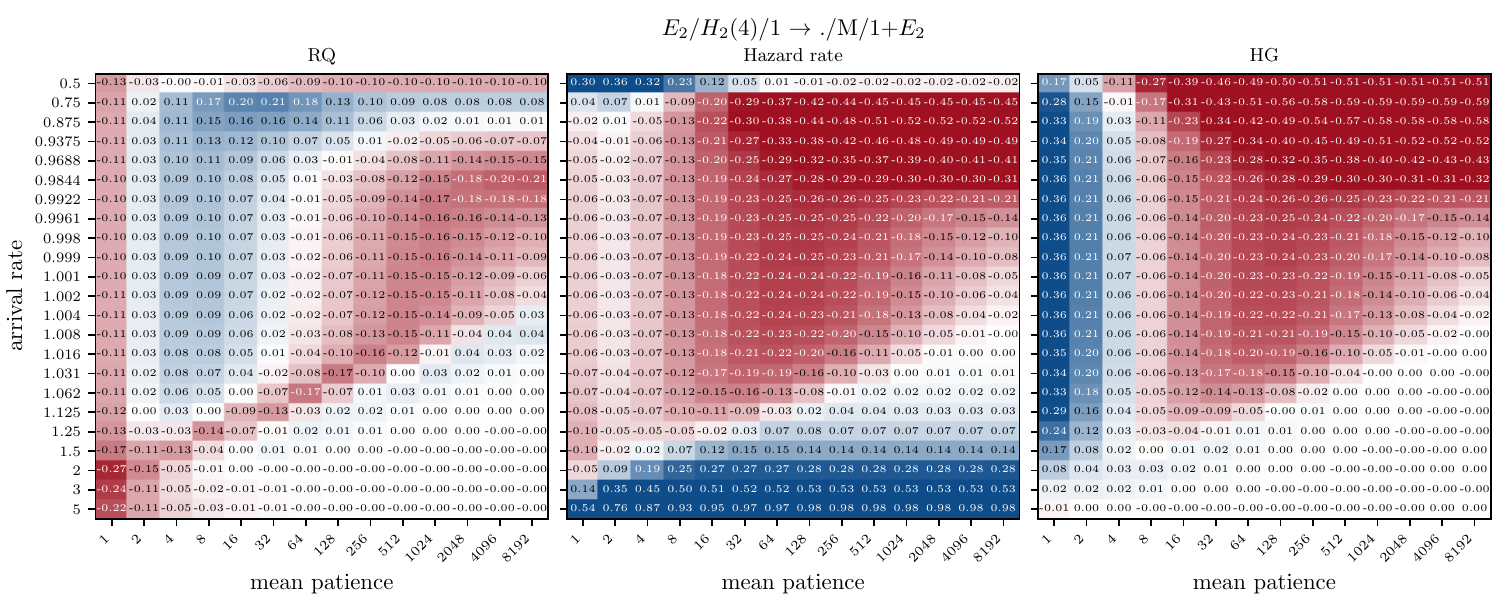}
    \caption{Signed relative errors for the tandem models $H_2(4)/E_2/1\to\cdot/M/1{+}H_2(4)$ (top) and $E_2/H_2(4)/1\to\cdot/M/1{+}E_2$ (bottom). Columns report refined RQ (left), WG for $f(0)>0$ and hazard-rate scaling for $f(0)=0$ (center), and HG (right).}\label{fig:QIS}
\end{figure}

Figure~\ref{fig:QIS} considers
$H_2(4)/E_2/1\to\cdot/M/1{+}H_2(4)$ and
$E_2/H_2(4)/1\to\cdot/M/1{+}E_2$.
Refined RQ remains accurate over most of the displayed grid despite the non-renewal downstream input.
For a mean patience time of at least $4$, the displayed absolute relative error is at most $20\%$ except at one grid point.
At mean patience time $2$, the first tandem model is underestimated by approximately $22\%$--$25\%$ over much of the load grid, consistent with the short-patience limitation observed for single queues.

\section{Conclusion}\label{sec:conclusion}

We developed two Robust Queueing approximations for the mean stationary virtual waiting time in a $GI/GI/1{+}GI$ queue.
Both start from the exact stationary reverse-time supremum and replace each endogenous effective net-input increment by a deterministic mean--standard-deviation envelope evaluated self-consistently at a deterministic trial approximation to the mean stationary virtual waiting time.
The first RQ method uses a deterministic-time-change variance surrogate.
The refined method additionally applies a horizon-dependent factor $w_{c,k}$ that represents the negative feedback from congestion to abandonment and back to effective work.
Both constructions reduce to a unique scalar fixed point.

The analysis distinguishes model identities from approximation steps.
The Poisson effective-arrival compensator and the renewal Palm decomposition are exact in their respective settings; the stationary self-consistency step and both finite-system variance surrogates are approximations.
For the refined variance, the stationary heavy-traffic limit and the fixed-horizon long-patience limit are proved separately, whereas their interpolation in \eqref{eq:IDW_effective_approx} is heuristic.
Heavy-traffic analysis shows that both RQ fixed points recover the underloaded, critical, and overloaded scales, and the refined critical limit retains the variance-reduction function.

Across the numerical grid, refined RQ generally compares favorably with the diffusion benchmarks and remains stable away from critical loading.
Its main systematic weakness is short patience with high abandonment, especially under high primitive variability; this regime is closer to an admission-or-loss mechanism than to accumulated-workload behavior.
The tandem experiment also indicates that IDC propagation can support useful non-renewal-input calculations, but it is an algorithmic extension rather than a proved network approximation.
Natural next steps are a separate loss-regime RQ model, data-driven IDC estimation with uncertainty quantification, and theory for non-renewal inputs and networks with abandonment.

\section*{Acknowledgments}

W. You's research is generously supported by the Hong Kong Research Grants Council [Grant GRF 16212823] and [Theme-based Research Project T32-615/24-R].

\bibliography{refs_RQ_ab}
\bibliographystyle{plainnat}

\clearpage

\appendix

\part*{Appendix}
\addcontentsline{toc}{part}{Appendix}
\etocsetnexttocdepth{subsubsection}

\localtableofcontents

\section{Review of Existing Methods}\label{sec:other_methods}

\paragraph{Exact formula for $M/M/1 + GI$ models.}
Let $H(x)=\int_0^x\bar F_{\alpha}(u)du$; then the single-server specialization of \citet[Eq.~(9.9)]{zeltyn2005call} gives
\begin{align}
    \E[Z] & = \frac{\lambda \int_{0}^{\infty}x \exp(\lambda H(x) - \mu x) dx}{1 + \lambda \int_{0}^{\infty} \exp(\lambda H(x) - \mu x) dx}
    = \frac{ \int_{0}^{\infty}x \exp(\mu \int_0^x (\rho \bar F_{\alpha}(u) - 1) du) dx}{1/\lambda + \int_{0}^{\infty} \exp(\mu \int_0^x (\rho \bar F_{\alpha}(u) - 1) du) dx}. \label{eq:MM1_exact}
\end{align}

\paragraph{Approximation for critically-loaded $GI/GI/1 + GI$ models based on the derivative at $0$.}
Assuming $F'(0)> 0$, from Section 5 of \cite{ward2005diffusion}
\[
    \E[Z] \approx \alpha^{-1/2}\left[\frac{c}{F'(0)} + \frac{\phi\left(-\sqrt{2\mu}c/\sqrt{F'(0)\tilde c_x^2}\right)}{1-\Phi\left(-\sqrt{2\mu}c/\sqrt{F'(0)\tilde c_x^2}\right)}\sqrt{\frac{\tilde c_x^2}{2 \mu F'(0)}}\right], \quad \text{where }\tilde c_x^2 = \rho c_a^2 + (\rho \wedge 1)c_s^2.
\]

\paragraph{Hazard rate scaling approximation for critically-loaded $GI/GI/1 + GI$ models \cite{reed2008approximating}.}

\[
    \E[Z] \approx \frac{\int_0^{\infty}x\exp\left\{\frac{2\mu}{c_x^2}\int_0^x \left[ \log(\bar F_{\alpha}(u)) + (\rho - 1)\right] du\right\} dx}{\int_0^{\infty}\exp\left\{\frac{2\mu}{c_x^2}\int_0^x \left[\log(\bar F_{\alpha}(u)) + (\rho - 1)\right] du\right\} dx},
    \quad \text{where } c_x^2 = c_a^2 + c_s^2.
\]

\paragraph{Universal approximation for $M/GI/1 + GI$ models in \cite{huang2018beyond}.}

\[
    \E[Z] \approx \frac{\int_0^{\infty}x\exp\left\{\frac{2\mu}{(1 + c_s^2)(\rho \wedge 1)}\int_0^x(\rho \bar F_{\alpha}(u) - 1)du\right\} dx}{\int_0^{\infty}\exp\left\{\frac{2\mu}{(1 + c_s^2)(\rho \wedge 1)}\int_0^x(\rho \bar F_{\alpha}(u) - 1)du\right\} dx}.
\]
Comparing with \eqref{eq:MM1_exact}, for this approximation to be exact for the $M/M/1 + GI$ model, one should add an additional constant $1/\lambda$ in the denominator and remove the modifier $\rho\wedge 1$ for the variability parameter $1 + c_s^2$.

\paragraph{Modification of \cite{huang2018beyond} for $GI/GI/1 + GI$ models.}
The formula in \cite{huang2018beyond} can be modified to obtain a naive approximation for $GI/GI/1 + GI$ models with non-Poisson renewal arrival processes.
In particular, this is done by observing that exponential interarrival times have a SCV of $c_a^2 = 1$ and plugging in the corresponding SCV $c_a^2$ of the renewal arrival process.
\[
    \E[Z] \approx \frac{\int_0^{\infty}x\exp\left\{\frac{2\mu}{c_x^2(\rho \wedge 1)}\int_0^x(\rho \bar F_{\alpha}(u) - 1)du\right\} dx}{\int_0^{\infty}\exp\left\{\frac{2\mu}{c_x^2(\rho \wedge 1)}\int_0^x(\rho \bar F_{\alpha}(u) - 1)du\right\} dx},
    \quad \text{where } c_x^2 = c_a^2 + c_s^2.
\]

\section{A Heuristic Tandem-Queue Approximation}\label{sec:appendix_tandem}

This section summarizes how we extend the RQ approximations to a two-node tandem model, in which the downstream queue has abandonment and its input process is the \emph{departure} process from an upstream $GI/GI/1$ queue.
The key point is that, although the downstream arrivals are generally \emph{non-renewal}, the refined RQ approximation only requires the arrival process through its \emph{IDC} function; we therefore approximate the downstream arrival IDC using an IDC-based departure approximation from \cite{whitt2018heavy,whitt2022robust}.

\paragraph{Model.}
Consider two single-server FCFS queues in series.
Queue~1 is a stable $GI/GI/1$ queue without abandonment.
Its stationary departure process is routed to Queue~2, which is a $GI/GI/1{+}GI$ queue with patience-time distribution $F_\alpha$.
The external arrival rate to Queue~1 is $\lambda$, and Queue~1 has traffic intensity $\rho_1<1$; hence the throughput of Queue~1 is $\lambda$ and the arrival rate to Queue~2 is also $\lambda$.
For a stationary counting process $A(\cdot)$ with rate $\lambda$, recall the IDC
\[
    I_A(t)\triangleq\frac{\Var\bigl(A(t)-A(0)\bigr)}{\E\bigl(A(t)-A(0)\bigr)}
    =\frac{\Var\bigl(A(t)-A(0)\bigr)}{\lambda t}, \qquad t>0.
\]
For renewal processes, $I_A(t)$ can be computed numerically from renewal-function or Laplace-transform representations; see, e.g., \cite{whitt2022robust} and references therein.
For general \emph{non-renewal} processes (such as departures from a $GI/GI/1$ queue), we work directly with $I_A(\cdot)$.

\paragraph{Step 1: Approximate the departure IDC from Queue~1.}
Let $I_{a,1}(\cdot)$ be the IDC of the external arrival process to Queue~1 (a renewal process in our experiments), and let $I_{s,1}(\cdot)$ be the IDC of the \emph{equilibrium service renewal process rescaled to rate $\lambda$}.
Let $c_{a,1}^2 \triangleq I_{a,1}(\infty)$ and $c_{s,1}^2 \triangleq I_{s,1}(\infty)$, and define $c_{x,1}^2 \triangleq c_{a,1}^2 + c_{s,1}^2$.

Following \cite{whitt2022robust}, we approximate the stationary departure IDC from Queue~1 by the convex combination
\begin{equation}\label{eq:IDC_dep_app}
    I_{d,1}(t)\approx w_{\rho_1}(t) I_{a,1}(t) + \bigl(1-w_{\rho_1}(t)\bigr) I_{s,1}(t), \qquad t\ge 0,
\end{equation}
where the weight function is
\[
    w_{\rho_1}(t)\triangleq w^*\left(\frac{(1-\rho_1)^2 \lambda t}{\rho_1 c_{x,1}^2}\right),
\]
and $w^*(\cdot)$ is the heavy-traffic limiting weight derived from the canonical RBM correlation structure; an explicit closed form is available (see \cite{whitt2022robust}): for $u>0$,
\begin{equation}\label{eq:wstar_app}
    w^*(u) = \frac{1}{2u} \left( (u^2+2u-1)\bigl(2\Phi(\sqrt{u})-1\bigr) + 2\phi(\sqrt{u})\sqrt{u}(1+u) - u^2 \right),
\end{equation}
where $\phi$ and $\Phi$ are the standard normal density and distribution functions, respectively.
The function $w^*(u)$ is increasing and satisfies $0\le w^*(u)\le 1$ \cite{whitt2022robust}, so \eqref{eq:IDC_dep_app} interpolates smoothly between service-scale variability (small $t$) and arrival-scale variability (large $t$).

\paragraph{Step 2: Use the departure IDC as the downstream arrival IDC.}
Because Queue~2 receives the departures from Queue~1, its true arrival IDC equals the true departure IDC from Queue~1. Using the approximation in \eqref{eq:IDC_dep_app}, we set
\begin{equation}\label{eq:IDC_arrival2_app}
    I_{a,2}(t)\approx I_{d,1}(t), \qquad t\ge 0.
\end{equation}

\paragraph{Step 3: The IDW input to the refined RQ approximation at Queue~2.}
In the refined RQ approximation for the $GI/GI/1{+}GI$ model, the arrival process enters through the \emph{effective IDW} (see \eqref{eq:IDW_refined_RQ}--\eqref{eq:IDW_effective_approx}).
For the tandem system, we use \eqref{eq:IDC_arrival2_app} and set
\begin{equation}\label{eq:IDW_tandem_app}
    \hat I_{w,2}(t) \triangleq \frac{I_{a,2}(t)}{\rho_2\vee 1} + \left(1-\frac{1}{\rho_2\vee 1}\right) + c_{s,2}^2,
    \qquad t\ge 0,
\end{equation}
where $\rho_2=\lambda/\mu_2$ is the nominal traffic intensity of Queue~2 (ignoring abandonment), and $c_{s,2}^2$ is the service-time SCV at Queue~2 (e.g., $c_{s,2}^2=1$ for exponential service).
The abandonment-modulated IDW is then obtained as in \eqref{eq:IDW_effective_approx} by multiplying $\hat I_{w,2}(t)$ with the abandonment factor $w_{\tilde c_\alpha,k}(\alpha^{2h}\tau t)$ from the refined RQ algorithm, using $c_{a,2}^2\approx\lim_{t\to\infty}I_{d,1}(t)$ in $c_{x,2}^2=c_{a,2}^2+c_{s,2}^2$, $\tilde c_\alpha$, and $\tau$.

To approximate the mean stationary virtual waiting time in Queue~2 for a tandem system: (i) compute/approximate $I_{a,1}(\cdot)$ and $I_{s,1}(\cdot)$, (ii) approximate $I_{d,1}(\cdot)$ via \eqref{eq:IDC_dep_app}--\eqref{eq:wstar_app}, (iii) set $I_{a,2}(\cdot)\approx I_{d,1}(\cdot)$ and $c_{a,2}^2\approx\lim_{t\to\infty}I_{d,1}(t)$, and (iv) run the refined RQ procedure for Queue~2 using \eqref{eq:IDW_tandem_app} in place of \eqref{eq:IDW_refined_RQ}.
This is precisely the IDC-based propagation mechanism advocated in \cite{whitt2022robust}.

\section{Heuristic Extensions for Other Performance Measures}\label{sec:heuristic}

Additional steady-state performance measures can be approximated by combining the RQ approximation for the mean virtual waiting time with standard identities for $GI/GI/1{+}GI$ queues.
We describe three such extensions: the abandonment probability, the mean waiting time of served customers, and the effective queue length.
These extensions are heuristic in nature, and we do not provide theoretical guarantees for their accuracy.

Let $Z_{\mathrm{RQ}}$ denote the steady-state RQ approximation of the mean virtual waiting time, i.e., the solution of \eqref{eq:RQ_ab_1} or \eqref{eq:RQ_ab_2} depending on the choice of RQ algorithm.

\paragraph{Abandonment probability.}
Let $p_{\mathrm{ab}}$ denote the steady-state probability that an arriving customer abandons.
In steady state, $p_{\mathrm{ab}} = \E\left[F_\alpha(W)\right]$ and $W=Z(T_i-)$, where $W$ is the offered waiting time seen by an arrival.
As a first-order approximation, we replace $W$ by its RQ mean and set
\begin{equation}\label{eq:ab_approx}
    p_{\mathrm{ab}} \approx F_\alpha(Z_{\mathrm{RQ}}).
\end{equation}

\paragraph{Mean waiting time of served customers.}
For the $GI/GI/1$ model without abandonment, it is well-known that the mean steady-state workload (virtual waiting time) and the mean steady-state waiting time are connected by Pollaczek-Khintchine formula
\[
    \E[Z_{GI/GI/1}] = \rho\left(\E[W_{GI/GI/1}] + \frac{c_s^2 + 1}{2\mu}\right).
\]
For the $GI/GI/1 + GI$ model, \cite{baccelli1984single} derives the corresponding extension for the waiting time of a customer \emph{conditional on being served} in the $GI/GI/1{+}GI$ model:
\begin{equation}\label{eq:PK_abandonment}
    \E[Z] = (1-p_{\mathrm{ab}})\rho\left(\E[W] + \frac{c_s^2 + 1}{2\mu}\right),
\end{equation}
where $p_{\mathrm{ab}}$ is the steady-state probability of abandonment.
Combining \eqref{eq:ab_approx} and \eqref{eq:PK_abandonment}, we approximate
\begin{equation}\label{eq:Wsvc_approx}
    \E[W] \approx \max\left\{0, \frac{Z_{\mathrm{RQ}}}{\rho\bigl(1-F_\alpha(Z_{\mathrm{RQ}})\bigr)} - \frac{c_s^2+1}{2\mu}\right\}.
\end{equation}

\paragraph{Effective queue length.}
Let $Q_0$ denote the steady-state number of customers \emph{waiting} who will eventually enter service (i.e., the queue length associated with the \emph{effective} arrival stream).
Little's law applied to the effective stream gives $\E[Q_0] = \lambda(1-p_{\mathrm{ab}}) \E[W]$.
Using \eqref{eq:ab_approx} and \eqref{eq:Wsvc_approx} yields
\[
    \E[Q_0] \approx \max\left\{0, \mu Z_{\mathrm{RQ}} - \rho\bigl(1-F_\alpha(Z_{\mathrm{RQ}})\bigr)\frac{c_s^2+1}{2}\right\}.
\]

\section{Proofs}\label{sec:proof}

\subsection{Proof of Lemma~\ref{lm:drift_poisson}}\label{sec:proof_mean_arrival}

\begin{proof}
Taking expectations of the increment
\[
    A_0(t)-A_0(t-s)=M_0(t)-M_0(t-s)+\Lambda_0(t)-\Lambda_0(t-s)
\]
gives
\[
    \E[A_0(t)-A_0(t-s)] = \lambda \E\left[\int_{t-s}^{t}\bar F_\alpha(Z(u-))du\right].
\]
For the effective work input, let $M_Y(t)\triangleq Y(t)-\Lambda_Y(t)$, where $\Lambda_Y$ is the compensator of $Y(\cdot)$ $ \Lambda_Y(t)\triangleq \frac{\lambda}{\mu}\int_0^t \bar F_{\alpha} \bigl(Z(u-)\bigr) du. $
Then $M_Y$ is an integrable martingale, because the service marks are independent of the arrival process and $\E[Y(t)]\le \E[\sum_{i=1}^{A(t)}V_i]=\lambda t/\mu<\infty$.
Taking expectations of
\[
    Y(t)-Y(t-s)=M_Y(t)-M_Y(t-s)+\Lambda_Y(t)-\Lambda_Y(t-s)
\]
gives
\[
    \E[Y(t)-Y(t-s)] = \frac{\lambda}{\mu} \E\left[\int_{t-s}^{t}\bar F_\alpha(Z(u-))du\right].
\]
Since $N(t)-N(t-s)=Y(t)-Y(t-s)-s$, the two nonstationary identities follow.

Assume now that $Z(\cdot)$ is strictly stationary.
Because $0\le \bar F_\alpha(Z(u-))\le1$, Tonelli's theorem yields
\[
    \E\left[\int_{t-s}^{t}\bar F_\alpha(Z(u-))du\right] = \int_{t-s}^{t}\E[\bar F_\alpha(Z(u-))]du.
\]
At every deterministic $u$, the stationary simple arrival process has no point at $u$ a.s., so $Z(u-)=Z(u)$ a.s.
Stationarity therefore gives
\[
    \E\left[\int_{t-s}^{t}\bar F_\alpha(Z(u-))du\right] = s\E[\bar F_\alpha(Z(0))].
\]
Substituting this identity into the two nonstationary formulas proves the stationary formulas.
\end{proof}

\subsection{Proofs of Lemma~\ref{lm:correction} and Proposition~\ref{prop:palm_optimizer_scales}}

The lemma follows from Campbell--Mecke and one-cycle Palm inversion, and the proposition applies the resulting uniform $O(\alpha s)$ bound at the three RQ horizon scales.

\begin{proof}[Proof of Lemma~\ref{lm:correction}]
Campbell--Mecke \citep[Eq.~(21)]{bremaud1993stationary} and time stationarity give
\[
    \E[\delta_\alpha(s)]=\lambda s\{\E^0[\bar F_\alpha(Z_{\mathrm{pre}})]-\E[\bar F_\alpha(Z(0))]\},
\]
which proves \eqref{eq:palm_delta}.
One-cycle Palm inversion and invariance of the arrival Palm law under the next-arrival shift \citep[Eqs.~(22)--(23)]{bremaud1993stationary} give
\[
    \E[\bar F_\alpha(Z(0))]
    =\lambda\E^0\left[\int_0^U\bar F_\alpha\bigl((Z_{\mathrm{post}}-u)^+\bigr)du\right],
    \qquad
    Z_{\mathrm{pre}}\stackrel{d}{=}(Z_{\mathrm{post}}-U)^+.
\]
For $x,u\ge0$,
\[
    \bar F_\alpha\bigl((x-u)^+\bigr)=\bar F_\alpha(x)+\alpha\int_0^{u\wedge x}f\bigl(\alpha(x-v)\bigr)dv.
\]
Substitution into these identities and Fubini's theorem yield
\[
    \Delta_\alpha
    =\E^0\left[(1-\lambda U)\bar F_\alpha(Z_{\mathrm{post}})
    +\alpha\int_0^{U\wedge Z_{\mathrm{post}}}\{1-\lambda(U-v)\}f\bigl(\alpha(Z_{\mathrm{post}}-v)\bigr)dv\right].
\]
Under the renewal Palm law, $U$ has the ordinary interarrival distribution and is independent of the past and the marks at time $0$, hence it is independent of $Z_{\mathrm{post}}$ and $\E^0[U]=1/\lambda$.
The first term therefore has mean zero, proving \eqref{eq:Delta_alpha_density}.
Finally, boundedness of $f$, \eqref{eq:palm_delta}, and $\lambda^2\E^0[U^2]=1+c_a^2$ give
\[
    |\E[\delta_\alpha(s)]|
    \le\lambda\alpha s\|f\|_\infty\E^0\left[U+\frac{\lambda U^2}{2}\right]
    =\frac{3+c_a^2}{2}\|f\|_\infty\alpha s,
\]
which proves \eqref{eq:palm_linear_bound}.
\end{proof}

\begin{proof}[Proof of Proposition~\ref{prop:palm_optimizer_scales}]
Let $C=(3+c_a^2)\|f\|_\infty/2$.
Equation~\eqref{eq:palm_linear_bound} gives
\begin{align*}
    (1-\rho_\alpha)\sup_{s\le M(1-\rho_\alpha)^{-2}}|\E[\delta_\alpha(s)]|&\le CM\alpha/(1-\rho_\alpha),\\
    \alpha^h\sup_{s\le M\alpha^{-2h}}|\E[\delta_\alpha(s)]|&\le CM\alpha^{1-h},\\
    \alpha^{1-\gamma/k}\sup_{s\le M\alpha^{-1-(k-1)\gamma/k}}|\E[\delta_\alpha(s)]|&\le CM\alpha^{1-\gamma}.
\end{align*}
The first bound vanishes because $1-\rho_\alpha\sim(-c)\alpha^\gamma$ and $\gamma<h<1$.
The other two vanish because $h<1$ and $0<\gamma<h<1$.
\end{proof}

\subsection{General Heavy-Traffic Lemmas}

We repeatedly use the following uniform small-argument expansion to control $F(\alpha x)$, $\bar F(\alpha x)$, and $1/\bar F(\alpha x)$ as $\alpha\downarrow 0$.

\begin{lemma}[Uniform small-argument expansion]\label{lem:uniform_F_expand}
Suppose Assumption~\ref{assumption:F} holds with index $k\ge 1$.
Then for every $R>0$,
\begin{equation}\label{eq:F_uniform_expand}
    \sup_{0\le x\le R} \left| \frac{F(\alpha x)}{\alpha^k} - \beta x^k \right| \longrightarrow 0
    \qquad\text{as }\alpha\downarrow 0.
\end{equation}
Consequently, for every $R>0$,
\begin{align}
    \bar F(\alpha x) & = 1-\beta \alpha^k x^k + o(\alpha^k),
    \qquad\text{uniformly for }x\in[0,R], \label{eq:Fbar_uniform_expand}\\
    \frac{1}{\bar F(\alpha x)} & = 1+\beta \alpha^k x^k + o(\alpha^k),
    \qquad\text{uniformly for }x\in[0,R]. \label{eq:invFbar_uniform_expand}
\end{align}
\end{lemma}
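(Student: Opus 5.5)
The plan is to use Taylor's theorem with the remainder written in integral (or mean-value) form, since Assumption~\ref{assumption:F} supplies exactly $k$ continuous derivatives with $F^{(j)}(0)=0$ for $j<k$. For $y\ge0$,
\[
    F(y)=\int_0^y \frac{(y-u)^{k-1}}{(k-1)!}F^{(k)}(u)\,du ,
\]
and since $\int_0^y (y-u)^{k-1}/(k-1)!\,du = y^k/k!$ we get
\[
    F(y)-\beta y^k=\int_0^y \frac{(y-u)^{k-1}}{(k-1)!}\bigl(F^{(k)}(u)-F^{(k)}(0)\bigr)du,
\]
using $\beta=F^{(k)}(0)/k!$. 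Setting $\omega(\eta)\triangleq\sup_{0\le u\le\eta}|F^{(k)}(u)-F^{(k)}(0)|$, this yields $|F(y)-\beta y^k|\le \omega(y)\,y^k/k!$. Continuity of $F^{(k)}$ at $0$ (on $[0,\infty)$, one-sided) gives $\omega(\eta)\to0$ as $\eta\downarrow0$.

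Next I substitute $y=\alpha x$ with $0\le x\le R$ and divide by $\alpha^k$:
\[
    \sup_{0\le x\le R}\left|\frac{F(\alpha x)}{\alpha^k}-\beta x^k\right|\le \frac{R^k}{k!}\,\omega(\alpha R)\longrightarrow0 .
\]
This is \eqref{eq:F_uniform_expand}. Expansion \eqref{eq:Fbar_uniform_expand} follows at once from $\bar F=1-F$.

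For \eqref{eq:invFbar_uniform_expand}, I write $\epsilon_\alpha(x)\triangleq F(\alpha x)$. By the first part, $\sup_{x\le R}\epsilon_\alpha(x)\le \alpha^k(\beta R^k+o(1))\to0$, so for small $\alpha$ we have $\epsilon_\alpha\le 1/2$ uniformly. Then
\[
    \frac{1}{1-\epsilon}=1+\epsilon+\frac{\epsilon^2}{1-\epsilon}, \qquad 0\le\frac{\epsilon^2}{1-\epsilon}\le 2\epsilon^2=O(\alpha^{2k})=o(\alpha^k)
\]
uniformly on $[0,R]$. Combining this with $\epsilon_\alpha(x)=\beta\alpha^kx^k+o(\alpha^k)$ gives the claim.

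There is no real obstacle. The only point needing care is that uniformity in $x$ comes from the modulus of continuity $\omega$ being evaluated at $\alpha R$ rather than at $\alpha x$, together with the monotonicity of $\omega$. The case $k=1$ is covered by the same formula, with the integral representation reducing to $F(y)=\int_0^y f(u)\,du$.
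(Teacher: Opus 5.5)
Your proof is correct and follows the paper's argument. The only difference is that the paper uses the Lagrange (mean-value) form of the Taylor remainder rather than the integral form; it arrives at the same uniform bound $\frac{R^k}{k!}\sup_{0\le z\le \alpha R}|F^{(k)}(z)-F^{(k)}(0)|$ and then expands $1/(1-F(\alpha x))=1+F(\alpha x)+O(F(\alpha x)^2)$ just as you do.
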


\begin{proof}
Fix $R>0$.
By Taylor's theorem with the mean-value remainder, for each $y\in[0,\alpha R]$ there exists $\theta=\theta(y)\in(0,1)$ such that
\[
    F(y) = \sum_{j=0}^{k-1}\frac{F^{(j)}(0)}{j!}y^j + \frac{F^{(k)}(\theta y)}{k!}y^k.
\]
Assumption~\ref{assumption:F} implies $F^{(j)}(0)=0$ for $j<k$, hence $F(y)=F^{(k)}(\theta y)y^k/k!$.
Taking $y=\alpha x$ with $x\in[0,R]$ gives
\[
    \left| \frac{F(\alpha x)}{\alpha^k}-\beta x^k \right| = \frac{x^k}{k!} \bigl|F^{(k)}(\theta\alpha x)-F^{(k)}(0)\bigr|.
\]
Therefore,
\[
    \sup_{0\le x\le R} \left| \frac{F(\alpha x)}{\alpha^k}-\beta x^k \right|
    \le \frac{R^k}{k!}\sup_{0\le z\le \alpha R}\bigl|F^{(k)}(z)-F^{(k)}(0)\bigr| \longrightarrow 0,
\]
since $F^{(k)}$ is continuous at $0$.
This proves \eqref{eq:F_uniform_expand}.
Equation \eqref{eq:Fbar_uniform_expand} follows from $\bar F=1-F$.

To obtain \eqref{eq:invFbar_uniform_expand}, note that \eqref{eq:Fbar_uniform_expand} implies $\inf_{0\le x\le R}\bar F(\alpha x)\to 1$, so for sufficiently small $\alpha$ we have $\inf_{0\le x\le R}\bar F(\alpha x)\ge 1/2$.
Then, uniformly over $x\in[0,R]$,
\[
    \frac{1}{\bar F(\alpha x)} =\frac{1}{1-F(\alpha x)} =1+F(\alpha x)+O\bigl(F(\alpha x)^2\bigr)
    =1+\beta\alpha^k x^k + o(\alpha^k),
\]
using \eqref{eq:F_uniform_expand} and the fact that $F(\alpha x)=O(\alpha^k)$ uniformly on $[0,R]$.
\end{proof}

\begin{lemma}[Laplace concentration with a decreasing tail]\label{lem:laplace_concentration_tail}
Let $r_\alpha\to\infty$.
Let $G_\alpha$ be continuously differentiable functions on $[0,\infty)$, and let $G$ be continuous.
Assume that $G_\alpha\to G$ uniformly on compact subsets of $[0,\infty)$.
Assume that $G$ has a unique maximizer $s^*\in(0,\infty)$ and $G(s^*)>0$.
Assume further that there exist $S>s^*$ and $\eta>0$ such that $G_\alpha'(s)\le-\eta$ for all $s\ge S$ and all sufficiently small $\alpha$.
Then the probability measures
\[
    \nu_\alpha(ds) = \frac{\exp\{r_\alpha G_\alpha(s)\}ds} {\int_0^\infty\exp\{r_\alpha G_\alpha(u)\}du}
\]
concentrate weakly at $s^*$.
Consequently,
\[
    \frac{\int_0^\infty s\exp\{r_\alpha G_\alpha(s)\}ds} {\int_0^\infty\exp\{r_\alpha G_\alpha(s)\}ds} \to s^* .
\]
\end{lemma}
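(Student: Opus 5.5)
The plan is a standard Laplace-method argument: split $[0,\infty)$ into a small neighbourhood of $s^*$, a compact remainder, and the tail $[S,\infty)$. We bound numerator mass away from $s^*$ against a lower bound for the denominator near $s^*$.

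\textbf{Lower bound for the denominator.} Fix $\epsilon>0$ small enough that $s^*\pm\epsilon$ lies in $(0,S)$. Since $G$ is continuous with unique maximizer $s^*$ on $[0,S]$, set
\[
\delta\triangleq G(s^*)-\max\{G(s):s\in[0,S],\ |s-s^*|\ge\epsilon\}>0 .
\]
By continuity there is $\epsilon'\in(0,\epsilon)$ with $G\ge G(s^*)-\delta/4$ on $[s^*-\epsilon',s^*+\epsilon']$. Uniform convergence on $[0,S]$ then gives, for small $\alpha$, $G_\alpha\ge G(s^*)-\delta/2$ there. Hence
\[
\int_0^\infty e^{r_\alpha G_\alpha}\,du\ \ge\ 2\epsilon' e^{r_\alpha(G(s^*)-\delta/2)} .
\]

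\textbf{The compact remainder.} On $\{s\in[0,S]:|s-s^*|\ge\epsilon\}$ uniform convergence gives $G_\alpha\le G(s^*)-3\delta/4$ for small $\alpha$. The $\nu_\alpha$-mass of this set is therefore at most $S e^{-r_\alpha\delta/4}/(2\epsilon')\to0$.

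\textbf{The tail, which is the main obstacle.} Here there is no compactness, so I use the derivative hypothesis. For $s\ge S$, $G_\alpha(s)\le G_\alpha(S)-\eta(s-S)$, and $G_\alpha(S)\le G(S)+o(1)\le G(s^*)-3\delta/4$ because $S\ge s^*+\epsilon$. Consequently
\[
\int_S^\infty e^{r_\alpha G_\alpha}\ \le\ e^{r_\alpha(G(s^*)-3\delta/4)}\,\frac{1}{r_\alpha\eta},
\]
and the same tail estimate with an extra factor $s$ gives
\[
\int_S^\infty s\,e^{r_\alpha G_\alpha}\ \le\ e^{r_\alpha(G(s^*)-3\delta/4)}\left(\frac{S}{r_\alpha\eta}+\frac{1}{(r_\alpha\eta)^2}\right).
\]
Dividing by the denominator bound, both ratios are $O(e^{-r_\alpha\delta/4})\to0$. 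This proves weak concentration $\nu_\alpha\Rightarrow\delta_{s^*}$.

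\textbf{The mean.} Since weak convergence alone does not give convergence of means, I would bound $\int s\,\nu_\alpha(ds)$ directly. Write
\[
\left|\int s\,\nu_\alpha(ds)-s^*\right|\ \le\ \epsilon+\int_{|s-s^*|\ge\epsilon}(s+s^*)\,\nu_\alpha(ds).
\]
The last integral vanishes: on the compact piece because $s+s^*\le 2S$ and the mass goes to zero, and on $[S,\infty)$ by the first-moment tail bound above. Letting $\alpha\downarrow0$ and then $\epsilon\downarrow0$ yields the claim. The only delicate point is that $G(s^*)>0$ is not really needed beyond ensuring that the normalizing integral is finite; the decreasing-tail hypothesis supplies integrability.
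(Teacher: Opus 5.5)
Your proposal is correct and follows essentially the same argument as the paper: a lower bound on the normalizer from a small interval around $s^*$, a uniform gap on the compact remainder of $[0,S]$, and the linear decay $G_\alpha(s)\le G_\alpha(S)-\eta(s-S)$ on $[S,\infty)$. The decay makes the tail, weighted by $1+s$, exponentially negligible, which gives both concentration and convergence of the mean. The only cosmetic difference is that you keep the given $S$ and shrink $\epsilon$ so that $s^*+\epsilon<S$, whereas the paper enlarges $S$ beyond $s^*+\varepsilon$; you are also right that $G(s^*)>0$ is never used (finiteness of the normalizer comes from the tail hypothesis, not from $G(s^*)>0$).
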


\begin{proof}
Fix $\varepsilon>0$.
Choose $S>s^*+\varepsilon$ as in the statement.
By uniqueness of the maximizer and compactness of $[0,S]$, there exists $\zeta>0$ such that
\[
    \sup_{\substack{0\le s\le S\\ |s-s^*|\ge\varepsilon}}G(s) \le G(s^*)-3\zeta .
\]
Uniform convergence on $[0,S]$ gives the same bound with $G_\alpha$ and gap $2\zeta$ for all sufficiently small $\alpha$.
Also, for some interval $I\subset(s^*-\varepsilon,s^*+\varepsilon)$ of positive length, $G_\alpha(s)\ge G(s^*)-\zeta$ on $I$ for all sufficiently small $\alpha$.
Thus the mass outside the $\varepsilon$-neighborhood of $s^*$ but inside $[0,S]$ is exponentially negligible relative to the mass on $I$.

For $s\ge S$, the derivative bound gives $G_\alpha(s)\le G_\alpha(S)-\eta(s-S)$.
The compact-uniform convergence and the fact that $s^*$ is the unique maximizer give $G_\alpha(S)\le G(s^*)-2\zeta$ for all sufficiently small $\alpha$.
Hence
\[
    \int_S^\infty (1+s)e^{r_\alpha G_\alpha(s)}ds
    \le e^{r_\alpha(G(s^*)-2\zeta)} \int_S^\infty(1+s)e^{-r_\alpha\eta(s-S)}ds,
\]
which is also exponentially negligible relative to $\int_I e^{r_\alpha G_\alpha(s)}ds$.
This proves concentration at $s^*$ and uniform integrability of the first moment under $\nu_\alpha$.
The ratio convergence follows.
\end{proof}

\subsection{Proof of Theorem~\ref{Thm:HT_exact}}

\begin{proof}
The proof starts from the exact stationary formula and applies one ratio identity under the three natural spatial scalings.
Let $c_\alpha\triangleq\alpha^{-\gamma}(\rho_\alpha-1)$, so $c_\alpha\to c$ and $\lambda_\alpha\to\mu$, and define
\[
    K_\alpha(x)\triangleq\lambda_\alpha\int_0^x\bar F(\alpha u)du-\mu x,
    \qquad
    J_{\alpha,m}\triangleq\int_0^\infty x^m e^{K_\alpha(x)}dx,
    \quad m=0,1.
\]
The single-server specialization of \citet[Eq.~(9.9)]{zeltyn2005call} gives $\E[Z_\alpha]=\lambda_\alpha J_{\alpha,1}/(1+\lambda_\alpha J_{\alpha,0})$.
For any $a_\alpha>0$, set $L_\alpha(s)\triangleq K_\alpha(s/a_\alpha)$.
Changing variables gives
\begin{equation}\label{eq:MMG_scaled_ratio}
    a_\alpha\E[Z_\alpha]
    =
    \frac{\int_0^\infty s e^{L_\alpha(s)}ds}{\int_0^\infty e^{L_\alpha(s)}ds}
    \frac{\lambda_\alpha J_{\alpha,0}}{1+\lambda_\alpha J_{\alpha,0}}.
\end{equation}
Thus each regime reduces to a scaled integral ratio and the verification that $J_{\alpha,0}\to\infty$.

\paragraph{Underloaded ($c<0$ and $\gamma<h$).}
Since $c<0$, we have $d_\alpha\triangleq1-\rho_\alpha>0$ for all sufficiently small $\alpha$.
Set $a_\alpha\triangleq\mu d_\alpha$, so $a_\alpha\sim\mu(-c)\alpha^\gamma$ and
\[
    L_\alpha(s)=-s-\lambda_\alpha\int_0^{s/a_\alpha}F(\alpha u)du\le-s.
\]
For each fixed $s$, Lemma~\ref{lem:uniform_F_expand}, applied with small parameter $\alpha/a_\alpha$, gives
\[
    \lambda_\alpha\int_0^{s/a_\alpha}F(\alpha u)du
    =\frac{\lambda_\alpha\beta}{k+1}\frac{\alpha^k}{a_\alpha^{k+1}}s^{k+1}(1+o(1))
    \longrightarrow0,
\]
because $k-(k+1)\gamma>0$ when $\gamma<h$.
Dominated convergence yields $\int_0^\infty s^m e^{L_\alpha(s)}ds\to m!$ for $m=0,1$.
Since $J_{\alpha,0}=a_\alpha^{-1}(1+o(1))\to\infty$, \eqref{eq:MMG_scaled_ratio} gives $a_\alpha\E[Z_\alpha]\to1$.
Using $\E[Z_{M/M/1}]=\rho_\alpha/(\mu d_\alpha)$ and $\rho_\alpha\to1$ proves both underloaded limits.

\paragraph{Critically loaded ($\gamma\ge h$).}
Set $a_\alpha\triangleq\alpha^h$.
Then
\[
    L_\alpha(s)
    =\mu c_\alpha\alpha^{\gamma-h}s
    -\lambda_\alpha\alpha^{-h}\int_0^sF(\alpha^{1-h}v)dv.
\]
Lemma~\ref{lem:uniform_F_expand}, $k(1-h)=h$, and $\lambda_\alpha\to\mu$ imply compact-uniform convergence to
\[
    c\mu s\mathds{1}\{\gamma=h\}-\frac{\mu\beta}{k+1}s^{k+1}.
\]
Choose $\delta>0$ such that $F(y)\ge(\beta/2)y^k$ on $[0,\delta]$, and set $R_\alpha\triangleq\delta\alpha^{-(1-h)}$.
For some positive constants $C,c_0,c_1,c_2$ and all sufficiently small $\alpha$,
\[
    L_\alpha(s)\le Cs-c_0s^{k+1}\quad(0\le s\le R_\alpha),
    \qquad
    L_\alpha(R_\alpha)\le-\frac{c_1}{\alpha},
    \qquad
    L_\alpha'(s)\le-c_2\alpha^{-h}\quad(s\ge R_\alpha).
\]
The first bound follows from $k(1-h)=h$, the second from $R_\alpha^{k+1}=\delta^{k+1}/\alpha$, and the third from $F(\alpha^{1-h}s)\ge F(\delta)$ for $s\ge R_\alpha$.
These bounds provide the integrable envelope $e^{Cs-c_0s^{k+1}}$ before $R_\alpha$ and imply $\int_{R_\alpha}^\infty(1+s)e^{L_\alpha(s)}ds\to0$.
Dominated convergence therefore gives, for $m=0,1$,
\[
    \int_0^\infty s^m e^{L_\alpha(s)}ds
    \longrightarrow
    \int_0^\infty s^m
    \exp\left\{c\mu s\mathds{1}\{\gamma=h\}-\frac{\mu\beta}{k+1}s^{k+1}\right\}ds.
\]
The limiting denominator is positive, so $J_{\alpha,0}\to\infty$, and \eqref{eq:MMG_scaled_ratio} gives \eqref{eq:HT_exact}.

\paragraph{Overloaded ($c>0$ and $\gamma<h$).}
Set $a_\alpha\triangleq\alpha^{1-\gamma/k}$ and $r_\alpha\triangleq\alpha^{\gamma/h-1}$, so $r_\alpha\to\infty$.
Then $L_\alpha=r_\alpha G_\alpha$, where
\[
    G_\alpha(s)
    \triangleq
    \mu c_\alpha s
    -\lambda_\alpha\int_0^s\alpha^{-\gamma}F(\alpha^{\gamma/k}v)dv.
\]
Lemma~\ref{lem:uniform_F_expand} gives compact-uniform convergence to $G(s)=\mu cs-\mu\beta s^{k+1}/(k+1)$.
The function $G$ has the unique maximizer $s^*=(c/\beta)^{1/k}$, and $G(s^*)=\mu kcs^*/(k+1)>0$.
Choose $S>s^*$ so that $\beta S^k>4c$, and choose $\delta>0$ so that $F(y)\ge(\beta/2)y^k$ on $[0,\delta]$.
For some $\eta>0$ and all sufficiently small $\alpha$,
\[
    G_\alpha'(s)
    \le
    \begin{cases}
        \mu c_\alpha-\lambda_\alpha\beta s^k/2, & S\le s\le\delta\alpha^{-\gamma/k},\\
        \mu c_\alpha-\lambda_\alpha\alpha^{-\gamma}F(\delta), & s\ge\delta\alpha^{-\gamma/k},
    \end{cases}
    \le-\eta.
\]
Lemma~\ref{lem:laplace_concentration_tail} gives
\[
    \frac{\int_0^\infty s e^{L_\alpha(s)}ds}{\int_0^\infty e^{L_\alpha(s)}ds}\longrightarrow s^*.
\]
Compact-uniform convergence gives an interval around $s^*$ on which $G_\alpha\ge G(s^*)/2$, so $J_{\alpha,0}\to\infty$.
Equation~\eqref{eq:MMG_scaled_ratio} now yields $\alpha^{1-\gamma/k}\E[Z_\alpha]\to(c/\beta)^{1/k}$.
This completes the proof.
\end{proof}

\subsection{Proof of Theorem~\ref{thm:HT_limit}}

Fix $T>0$ and restrict all processes to $[0,T]$.
The proof has three steps.
First, a pathwise reflection lemma gives compact containment and the continuity needed for the final continuous-mapping argument.
Second, a single thinning lemma controls the abandonment and service martingales and proves $C$-tightness\footnote{A sequence is $C$-tight if it is tight in the Skorokhod $J_1$ topology and every subsequential weak limit is supported on continuous paths.} of the workload.
Third, the abandonment compensator converges to the nonlinear drift, after which the theorem follows from the reflected integral equation.

Let $\D([0,T],\R)$ denote the space of real-valued right-continuous functions with left limits on $[0,T]$, endowed with the Skorokhod $J_1$ topology.
For $x\in\D([0,T],\R)$, write $\|x\|_T\triangleq\sup_{0\le t\le T}|x(t)|$ and $\omega_T(x,\delta)\triangleq\sup\{|x(t)-x(s)|:0\le s\le t\le T,\ t-s\le\delta\}$.

\paragraph{Reflection map.}
For $y\in\D([0,T],\R)$, define
\[
    \Gamma(y)(t)\triangleq y(t)-\inf_{0\le s\le t}(y(s)\wedge0).
\]
If $y(0)\ge0$, then $(\Gamma(y),\Gamma(y)-y)$ is the unique solution of the one-dimensional Skorokhod problem with input $y$.

\begin{lemma}[Reflection and polynomial drift]\label{lem:Gamma_increment_monotone}
For $y_1,y_2\in\D([0,T],\R)$,
\[
    \|\Gamma(y_1)-\Gamma(y_2)\|_T\le2\|y_1-y_2\|_T,
    \qquad
    \omega_T(\Gamma(y_1),\delta)\le2\omega_T(y_1,\delta).
\]
If $d$ is nonincreasing with $d(0)\le0$, then $\Gamma(y+d)\le\Gamma(y)$, while the reverse inequality holds if $d$ is nondecreasing with $d(0)\ge0$.
For every $y\in\D([0,T],\R)$ with $y(0)\ge0$, there is a unique pair $(x,\ell)$ satisfying
\begin{equation}\label{eq:reflected_polynomial_map}
    x(t)=y(t)-\beta\int_0^t x(s)^kds+\ell(t),
    \qquad t\in[0,T],
\end{equation}
where $x\ge0$, $\ell$ is nondecreasing with $\ell(0)=0$, and $\int_0^T\mathds{1}\{x(t)>0\}d\ell(t)=0$.
If $y_n\to y$ uniformly, then the corresponding solutions converge uniformly, and the solution is continuous whenever $y$ is continuous.
Consequently, the solution map in \eqref{eq:reflected_polynomial_map} is $J_1$-continuous at every continuous input.
\end{lemma}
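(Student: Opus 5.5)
The plan has three parts. First, I would prove the two estimates for the reflection map $\Gamma$ and its monotonicity. Second, I would build the solution of \eqref{eq:reflected_polynomial_map} as a fixed point after reducing to a globally Lipschitz drift. Third, I would obtain continuity of the solution map from a Gronwall-type comparison.

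For the first part, the Lipschitz bound follows from $|\inf_{s\le t}(y_1(s)\wedge0)-\inf_{s\le t}(y_2(s)\wedge0)|\le\|y_1-y_2\|_T$ together with the triangle inequality. For the modulus bound, fix $s\le t$ with $t-s\le\delta$ and write
\[
\Gamma(y_1)(t)-\Gamma(y_1)(s)=y_1(t)-y_1(s)-\Bigl[\inf_{u\le t}(y_1(u)\wedge0)-\inf_{u\le s}(y_1(u)\wedge0)\Bigr].
\]
The bracket lies in $[-\sup_{s\le u\le t}(y_1(s)-y_1(u))^+,\,0]$, so it is at most $\omega_T(y_1,\delta)$ in absolute value. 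For monotonicity, suppose $d$ is nonincreasing with $d(0)\le0$. I would use the representation $\Gamma(y)(t)=\sup_{0\le s\le t}\{y(t)-y(s)\}\vee y(t)$. Then $(y+d)(t)-(y+d)(s)\le y(t)-y(s)$ and $(y+d)(t)\le y(t)$, so $\Gamma(y+d)\le\Gamma(y)$. The nondecreasing case is symmetric.

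For the second part, existence and uniqueness, I would first note a priori bounds. Any solution satisfies $0\le x\le\Gamma(y)$ by the monotonicity just proved, because the drift term $-\beta\int_0^\cdot x^k$ is nonincreasing and vanishes at $0$. Hence $\|x\|_T\le 2\|y\|_T=:R$. On $[0,R]$ the map $x\mapsto x^k$ is Lipschitz, so I replace it by the truncation $g(x)=(x\wedge R)^k$. I then seek $x=\Gamma\bigl(y-\beta\int_0^\cdot g(x)\bigr)$, which is a fixed point of the map $\Phi(x)\triangleq\Gamma\bigl(y-\beta\int_0^\cdot g(x(s))ds\bigr)$. By the Lipschitz bound on $\Gamma$,
\[
\|\Phi(x_1)-\Phi(x_2)\|_t\le 2\beta L_R\int_0^t\|x_1-x_2\|_s\,ds.
\]
Picard iteration in $\D$ with the sup norm, or Banach's theorem on a short interval followed by concatenation, then gives a unique fixed point. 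That fixed point obeys the same a priori bound $\le R$, so the truncation is inactive and it solves the original equation. Any solution solves the truncated one, which gives uniqueness. Continuity of the solution follows from continuity of $\Gamma(\text{continuous})$ whenever $y$ is continuous.

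For the third part, continuity, let $y_n\to y$ uniformly. The a priori bounds become uniform: $\|x_n\|_T\le 2\sup_n\|y_n\|_T$. With a common Lipschitz constant $L$ on that range,
\[
\|x_n-x\|_t\le 2\|y_n-y\|_T+2\beta L\int_0^t\|x_n-x\|_s\,ds.
\]
Gronwall's inequality then gives $\|x_n-x\|_T\le 2e^{2\beta LT}\|y_n-y\|_T\to0$. $J_1$-continuity at continuous $y$ follows because $J_1$-convergence to a continuous limit is equivalent to uniform convergence.

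The main obstacle I expect is the a priori bound and truncation step, which is needed because $x^k$ is not globally Lipschitz. The comparison $x\le\Gamma(y)$ requires the monotonicity claim, applied with $d=-\beta\int_0^\cdot x^k$ defined by the unknown solution itself. This is legitimate because $x\ge0$ makes $d$ nonincreasing regardless of which $x$ produced it.
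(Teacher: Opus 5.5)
Your proposal is correct and follows essentially the same route as the paper. You use the explicit formula $\Gamma(z)(t)=\max\{z(t),\sup_{s\le t}(z(t)-z(s))\}$ for the Lipschitz, modulus and comparison claims, derive the a priori bound $0\le x\le\Gamma(y)$ from the comparison, run Picard iteration with a Lipschitz constant for $x^k$ on $[0,R]$, and apply Gronwall for uniqueness and for continuity in $y$. Two small points: your truncation $(x\wedge R)^k$ should be read on nonnegative paths, for example as $((x\vee0)\wedge R)^k$, where the paper instead notes that the iterates stay in $[0,\Gamma(y)]$; and you should add the one-line step $\ell_n=x_n-y_n+\beta\int_0^\cdot x_n(s)^kds\to\ell$ uniformly, which gives convergence of the full solution pair.
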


\begin{proof}
The explicit formula
\[
    \Gamma(z)(t)=\max\left\{z(t),\sup_{0\le s\le t}\bigl(z(t)-z(s)\bigr)\right\}
\]
gives the two Lipschitz bounds and the stated increment comparison directly.
Set $g(x)=-\beta x^k$ and define $x^{(0)}=\Gamma(y)$ and
\[
    x^{(n+1)}=\Gamma\left(y+\int_0^\cdot g(x^{(n)}(s))ds\right).
\]
The increment comparison gives $0\le x^{(n)}\le\Gamma(y)$ for every $n$.
If $R=\|\Gamma(y)\|_T$ and $L_R$ is a Lipschitz constant of $g$ on $[0,R]$, then
\[
    \|x^{(n+1)}-x^{(n)}\|_t
    \le2L_R\int_0^t\|x^{(n)}-x^{(n-1)}\|_sds,
    \qquad t\le T.
\]
Iteration gives
\[
    \|x^{(n+1)}-x^{(n)}\|_T
    \le \|x^{(1)}-x^{(0)}\|_T\frac{(2L_RT)^n}{n!},
\]
so $x^{(n)}$ converges uniformly to a solution of \eqref{eq:reflected_polynomial_map}.
The same inequality and Gronwall's lemma give uniqueness.
If $y_n\to y$ uniformly, then $\Gamma(y_n)$ is uniformly bounded and the same Gronwall estimate gives $x_n\to x$ uniformly.
The identity $\ell_n=x_n-y_n+\beta\int_0^\cdot x_n(s)^kds$ then gives $\ell_n\to\ell$ uniformly.
Continuity follows because reflection preserves continuity.
Finally, $J_1$ convergence to a continuous path is uniform on $[0,T]$, which proves the last assertion.
\end{proof}

\paragraph{Thinning decomposition.}
Set $q_i^\alpha\triangleq F_\alpha(W_i^\alpha)$ and
\[
    \mathcal H_i^\alpha
    \triangleq\sigma(Z^\alpha(0))\vee\sigma\{T_j^\alpha:j\ge1\}\vee\sigma\{(D_j^\alpha,V_j^\alpha):1\le j<i\}.
\]
Then $W_i^\alpha$ is $\mathcal H_i^\alpha$-measurable and $q_i^\alpha=\E[\mathds{1}\{D_i^\alpha\le W_i^\alpha\}\mid\mathcal H_i^\alpha]$.
Define
\[
    C_{\mathrm{ab}}^\alpha(t)\triangleq\sum_{i=1}^{A^\alpha(t)}q_i^\alpha,
    \qquad
    M_{\mathrm{ab}}^\alpha(t)\triangleq A^\alpha(t)-A_0^\alpha(t)-C_{\mathrm{ab}}^\alpha(t),
\]
and set $\tilde C_{\mathrm{ab}}^\alpha(t)=\alpha^hC_{\mathrm{ab}}^\alpha(\alpha^{-2h}t)$ and $\tilde M_{\mathrm{ab}}^\alpha(t)=\alpha^hM_{\mathrm{ab}}^\alpha(\alpha^{-2h}t)$.
Also define
\[
    \tilde S_{\mathrm{fl}}^\alpha(t)
    \triangleq
    \alpha^h\sum_{i=1}^{A^\alpha(\alpha^{-2h}t)}
    \left(V_i^\alpha-\frac1{\mu_\alpha}\right)
    \mathds{1}\{D_i^\alpha>W_i^\alpha\}
\]
and
\[
    \Delta_S^\alpha(t)
    \triangleq
    \alpha^h\sum_{i=1}^{A^\alpha(\alpha^{-2h}t)}
    \left(V_i^\alpha-\frac1{\mu_\alpha}\right)
    \mathds{1}\{D_i^\alpha\le W_i^\alpha\}.
\]
The exact decompositions are
\begin{equation}\label{eq:HT_thinning_decompositions}
    \tilde A^\alpha-\tilde A_0^\alpha
    =\tilde C_{\mathrm{ab}}^\alpha+\tilde M_{\mathrm{ab}}^\alpha,
    \qquad
    \tilde S_{\mathrm{fl}}^\alpha
    =\tilde S^\alpha\circ\bar A^\alpha-\Delta_S^\alpha.
\end{equation}
By \citet[Th\'eor\`eme~I, p.~172]{lenglart1977relation}, applied to the nonnegative submartingale $M^2$ dominated by its predictable bracket $\langle M\rangle$, we have
\begin{equation}\label{eq:Lenglart_used}
    \mathbb P(\|M\|_T>\varepsilon)
    \le\frac{\eta}{\varepsilon^2}+\mathbb P(\langle M\rangle(T)>\eta),
    \qquad \varepsilon,\eta>0.
\end{equation}
The ucp implication used below is \citet[Corollaire~I, p.~173]{lenglart1977relation}.

\begin{lemma}[Tightness and thinning reduction]\label{lem:HT_input_reduction}
For every $T>0$,
\[
    \|\tilde M_{\mathrm{ab}}^\alpha\|_T+\|\Delta_S^\alpha\|_T\Rightarrow0,
\]
and
\[
    (\tilde A^\alpha,\tilde S_{\mathrm{fl}}^\alpha,\tilde Z^\alpha(0))
    \Rightarrow
    \left(c_aB_a\circ(\mu e),\mu^{-1}c_sB_s\circ(\mu e),Z^*(0)\right).
\]
Moreover, $\{\tilde Z^\alpha\}$ is $C$-tight, meaning that it is tight and every subsequential limit is continuous.
\end{lemma}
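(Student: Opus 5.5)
The proof has three parts, handled in order: the two martingale-type remainders, the joint convergence of the thinned service process, and $C$-tightness of $\tilde Z^\alpha$.

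\emph{Step 1: the two remainders.} With respect to the discrete filtration $\{\mathcal H_{i+1}^\alpha\}$, the summands $\mathds{1}\{D_i^\alpha\le W_i^\alpha\}-q_i^\alpha$ form a martingale difference sequence. Given $\mathcal H_i^\alpha$ and $D_i^\alpha$, the summands $(V_i^\alpha-1/\mu_\alpha)\mathds{1}\{D_i^\alpha\le W_i^\alpha\}$ are also martingale differences, because $V_i^\alpha$ is independent of $\mathcal H_i^\alpha$ and of $D_i^\alpha$. The arrival epochs are $\mathcal H_1^\alpha$-measurable, so we may time-change by $A^\alpha(\alpha^{-2h}\cdot)$ after conditioning on the arrival process. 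This gives martingales indexed by customer count, whose predictable brackets are
\[
\alpha^{2h}\sum_{i\le A^\alpha(\alpha^{-2h}T)}q_i^\alpha(1-q_i^\alpha)
\quad\text{and}\quad
\alpha^{2h}\frac{c_s^2}{\mu_\alpha^2}\sum_{i\le A^\alpha(\alpha^{-2h}T)}q_i^\alpha .
\]
Both are bounded by $\tilde C_{\mathrm{ab}}^\alpha(T)$ times a constant. Since $q_i^\alpha\le F(\alpha W_i^\alpha)$, we need $\tilde C^\alpha_{\mathrm{ab}}(T)$ to be stochastically bounded, which requires $\alpha^{2h}\cdot \alpha^{-2h}\cdot\alpha^{-h}\,\alpha^{k}(\alpha^{-h}\tilde Z)^k$ to be small. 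That bound depends on controlling $\sup_{t\le T}\tilde Z^\alpha(t)$, which is circular. I would break the circularity with the comparison part of Lemma~\ref{lem:Gamma_increment_monotone}. Abandonment only removes work, so $Z^\alpha$ is dominated pathwise by the workload $\Gamma(\cdot)$ of the queue without abandonment driven by the same primitives. By the FCLT \eqref{eq:FCLT_primitives}, the standard random-time-change argument, and the Lipschitz property of $\Gamma$, $\|\tilde Z^\alpha\|_T$ is therefore stochastically bounded. Lemma~\ref{lem:uniform_F_expand} then gives
\[
\tilde C_{\mathrm{ab}}^\alpha(T)\le \lambda_\alpha T\beta(1+o(1))\|\tilde Z^\alpha\|_T^k+o_P(1),
\]
using $k(1-h)=h$ and the identity $\alpha^{-h}\alpha^{k}\alpha^{-hk}=\alpha^{0}$. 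So $\tilde C^\alpha_{\mathrm{ab}}(T)=O_P(1)$, and the brackets are $O_P(\alpha^{h})\to0$ once we account for the extra $\alpha^h$ from the remaining scaling. More precisely, $q_i^\alpha=O_P(\alpha^{h})$ uniformly, so the bracket is $\alpha^{2h}\cdot\alpha^{-2h}\cdot O_P(\alpha^h)$. The Lenglart inequality \eqref{eq:Lenglart_used} then yields $\|\tilde M^\alpha_{\mathrm{ab}}\|_T+\|\Delta^\alpha_S\|_T\Rightarrow0$.

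\emph{Step 2: joint convergence.} By \eqref{eq:HT_thinning_decompositions}, $\tilde S^\alpha_{\mathrm{fl}}=\tilde S^\alpha\circ\bar A^\alpha-\Delta_S^\alpha$. The functional LLN gives $\bar A^\alpha\to\lambda e=\mu e$ uniformly on compacts, so the random-time-change theorem, applied jointly with \eqref{eq:FCLT_primitives}, gives the claimed limit triple. The limit components are continuous, and composition is continuous at continuous limits, so the convergence is joint.

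\emph{Step 3: $C$-tightness.} Write
\[
\tilde Z^\alpha=\Gamma\bigl(\tilde Z^\alpha(0)+\tilde A^\alpha/\mu_\alpha+\tilde S^\alpha_{\mathrm{fl}}+(\text{centering drift}) -\tilde C^\alpha_{\mathrm{ab}}/\mu_\alpha-\tilde M^\alpha_{\mathrm{ab}}/\mu_\alpha\bigr),
\]
with the centering drift equal to $\alpha^{-h}(\rho_\alpha-1)t\to ct$ plus small rounding terms. The modulus bound $\omega_T(\Gamma(y),\delta)\le2\omega_T(y,\delta)$ reduces tightness to that of the input. The first three terms are $C$-tight by Step 2, and $\tilde M^\alpha_{\mathrm{ab}}$ vanishes by Step 1. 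For $\tilde C^\alpha_{\mathrm{ab}}$, its increments over $[s,t]$ are at most $(\bar A^\alpha(t)-\bar A^\alpha(s)+O(\alpha^{2h}))\cdot C\|\tilde Z^\alpha\|_T^k$, which is Lipschitz up to vanishing error with an $O_P(1)$ constant. Jump sizes vanish, so every subsequential limit is continuous.

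I expect the main obstacle to be the a priori bound on $\|\tilde Z^\alpha\|_T$ used in Step 1. The dominating-queue comparison settles it, but it must be set up pathwise with identical primitives, and the jumps of service times at arrivals must be handled. I would try the explicit formula for $\Gamma$ first.
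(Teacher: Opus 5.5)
Your proposal is correct and follows essentially the paper's route. Both arguments use pathwise domination by the accept-all queue to get $\|\tilde Z^\alpha\|_T=O_p(1)$, the local bound $q_i^\alpha=O(\alpha^h)$ so that both predictable brackets are $O_p(\alpha^h)$ and Lenglart's inequality applies, the random-time-change theorem for $\tilde S^\alpha\circ\bar A^\alpha$, and $C$-tightness via the modulus bound for $\Gamma$, with the abandonment compensator's increments dominated by a constant times the increments of $\bar A^\alpha$. The differences are cosmetic: the paper localizes with the stopping time $\tau_R^\alpha$, whereas you bound the brackets directly by $\alpha^h\tilde C_{\mathrm{ab}}^\alpha(T)$ and use $\tilde C_{\mathrm{ab}}^\alpha(T)=O_p(1)$. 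Note that the bracket is $\alpha^h$ times $\tilde C_{\mathrm{ab}}^\alpha(T)$, not a constant times it, as your later correction acknowledges.
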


\begin{proof}
The identity $\bar A^\alpha=\lambda_\alpha e+\alpha^h\tilde A^\alpha$ and \eqref{eq:FCLT_primitives} imply $\|\bar A^\alpha-\mu e\|_T\Rightarrow0$.
The time changes are nondecreasing and converge uniformly to the continuous map $\mu e$, while the outer Brownian limit in \eqref{eq:FCLT_primitives} is continuous.
The composition theorem \citep[Theorem~13.2.1, p.~518]{whitt2002stochastic} therefore gives the joint convergence
\begin{equation}\label{eq:HT_unthinned_service_limit}
    (\tilde A^\alpha,\tilde S^\alpha\circ\bar A^\alpha,\tilde Z^\alpha(0))
    \Rightarrow
    \left(c_aB_a\circ(\mu e),\mu^{-1}c_sB_s\circ(\mu e),Z^*(0)\right).
\end{equation}
Let $\tilde Z_{\mathrm{tot}}^\alpha$ be the scaled workload in the coupled queue that accepts every arrival.
With $c_\alpha=\alpha^{-h}(\rho_\alpha-1)$,
\[
    \tilde Z_{\mathrm{tot}}^\alpha
    =\Gamma\left(\tilde Z^\alpha(0)+\frac1{\mu_\alpha}\tilde A^\alpha+\tilde S^\alpha\circ\bar A^\alpha+c_\alpha e\right).
\]
Equation \eqref{eq:HT_unthinned_service_limit} and Lemma~\ref{lem:Gamma_increment_monotone} imply that $\tilde Z_{\mathrm{tot}}^\alpha$ is $C$-tight.
The rejected workload is nondecreasing, so the increment comparison in Lemma~\ref{lem:Gamma_increment_monotone} gives $0\le\tilde Z^\alpha\le\tilde Z_{\mathrm{tot}}^\alpha$ pathwise.
Consequently, $\|\tilde Z^\alpha\|_T=O_p(1)$.

Let $\tau_R^\alpha\triangleq\inf\{t\in[0,T]:\tilde Z^\alpha(t)>R\}\wedge T$.
Since $k(1-h)=h$, Assumption~\ref{assumption:F} gives, for each fixed $R$, a constant $C_R$ such that
\begin{equation}\label{eq:HT_local_q_bound}
    q_i^\alpha\le C_R\alpha^h
\end{equation}
for every arrival up to scaled time $\tau_R^\alpha$ and all sufficiently small $\alpha$.
Indeed, before $\tau_R^\alpha$, $\alpha W_i^\alpha\le\alpha^{1-h}R$ and $F(x)\le C_Rx^k$ near zero.
The customer-indexed sums stopped after $A^\alpha(\alpha^{-2h}\tau_R^\alpha)$ terms are square-integrable martingales whose predictable brackets satisfy
\[
    \langle\tilde M_{\mathrm{ab}}^\alpha\rangle(\tau_R^\alpha)
    \le C_R\alpha^h\bar A^\alpha(T),
    \qquad
    \langle\Delta_S^\alpha\rangle(\tau_R^\alpha)
    \le C_R\alpha^h\bar A^\alpha(T).
\]
The second estimate uses $\sup_\alpha\Var(V_i^\alpha)<\infty$ and the independence of $V_i^\alpha$, $D_i^\alpha$, and $\mathcal H_i^\alpha$.
Equation \eqref{eq:Lenglart_used} makes both stopped martingales vanish uniformly in probability.
Since $\|\tilde Z^\alpha\|_T=O_p(1)$, letting $R\to\infty$ gives
\begin{equation}\label{eq:HT_martingales_negligible}
    \|\tilde M_{\mathrm{ab}}^\alpha\|_T+\|\Delta_S^\alpha\|_T\Rightarrow0.
\end{equation}
Combining \eqref{eq:HT_thinning_decompositions}, \eqref{eq:HT_unthinned_service_limit}, and \eqref{eq:HT_martingales_negligible} proves the stated joint convergence.

For $0\le s\le t\le T$, \eqref{eq:HT_local_q_bound} gives
\[
    0\le\tilde C_{\mathrm{ab}}^\alpha(t\wedge\tau_R^\alpha)-\tilde C_{\mathrm{ab}}^\alpha(s\wedge\tau_R^\alpha)
    \le C_R\bigl(\bar A^\alpha(t\wedge\tau_R^\alpha)-\bar A^\alpha(s\wedge\tau_R^\alpha)\bigr).
\]
The modulus bounds for $\bar A^\alpha$ show that the stopped compensators are $C$-tight.
Compact containment and $R\to\infty$ show that $\tilde C_{\mathrm{ab}}^\alpha$ is $C$-tight.
Equations \eqref{eq:HT_thinning_decompositions} and \eqref{eq:HT_martingales_negligible} then show that $\tilde A_0^\alpha$ is $C$-tight.
The exact identities
\begin{equation}\label{eq:HT_exact_scaled_balances}
    \tilde Y^\alpha=\frac1{\mu_\alpha}\tilde A_0^\alpha+\tilde S_{\mathrm{fl}}^\alpha,
    \qquad
    \tilde Z^\alpha=\Gamma\left(\tilde Z^\alpha(0)+\tilde Y^\alpha+c_\alpha e\right)
\end{equation}
complete the $C$-tightness proof by the modulus bound in Lemma~\ref{lem:Gamma_increment_monotone}.
\end{proof}

\begin{lemma}[Abandonment compensator]\label{lem:HT_abandonment_compact}
For every $T>0$,
\begin{equation}\label{eq:HT_abandonment_coupling}
    \left\|
    \tilde A^\alpha-\tilde A_0^\alpha
    -\lambda_\alpha\beta\int_0^\cdot(\tilde Z^\alpha(s))^kds
    \right\|_T
    \Rightarrow0.
\end{equation}
\end{lemma}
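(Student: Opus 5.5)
By the first identity in \eqref{eq:HT_thinning_decompositions}, $\tilde A^\alpha-\tilde A_0^\alpha=\tilde C_{\mathrm{ab}}^\alpha+\tilde M_{\mathrm{ab}}^\alpha$. Lemma~\ref{lem:HT_input_reduction} already gives $\|\tilde M_{\mathrm{ab}}^\alpha\|_T\Rightarrow0$. It therefore suffices to show
\[
    \Bigl\|\tilde C_{\mathrm{ab}}^\alpha-\lambda_\alpha\beta\int_0^\cdot(\tilde Z^\alpha(s))^kds\Bigr\|_T\Rightarrow0 .
\]
I will localize with $\tau_R^\alpha$ as in the proof of Lemma~\ref{lem:HT_input_reduction}. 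Since $\|\tilde Z^\alpha\|_T=O_p(1)$, we have $\mathbb P(\tau_R^\alpha<T)$ small uniformly in small $\alpha$ for $R$ large. Hence it is enough to prove the convergence with all processes stopped at $\tau_R^\alpha$, for each fixed $R$.

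\emph{Step 1 (Taylor replacement).} On $\{t\le\tau_R^\alpha\}$, each arrival has $\alpha W_i^\alpha=\alpha^{1-h}\tilde Z^\alpha(\alpha^{2h}T_i^\alpha-)$ with $\tilde Z^\alpha\le R$. Lemma~\ref{lem:uniform_F_expand} with small parameter $\alpha^{1-h}$, together with $k(1-h)=h$, gives
\[
    q_i^\alpha=F(\alpha^{1-h}x)=\alpha^h\beta x^k+o(\alpha^h),\qquad x=\tilde Z^\alpha(\alpha^{2h}T_i^\alpha-),
\]
uniformly over $x\in[0,R]$. Multiplying by $\alpha^h$ and summing over at most $A^\alpha(\alpha^{-2h}T)=\alpha^{-2h}\bar A^\alpha(T)$ arrivals, the $o(\alpha^h)$ errors contribute $o(1)\,\bar A^\alpha(T)\Rightarrow0$. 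Thus, up to a uniformly negligible error,
\[
    \tilde C_{\mathrm{ab}}^\alpha(t)\approx\beta\int_0^t \bigl(\tilde Z^\alpha(s-)\bigr)^k\,d\bar A^\alpha(s).
\]

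\emph{Step 2 (replacing $d\bar A^\alpha$ by $\lambda_\alpha ds$).} This is the main obstacle. Write $d\bar A^\alpha=\lambda_\alpha ds+\alpha^h d\tilde A^\alpha$. The remaining term is
\[
    \alpha^h\beta\int_0^t(\tilde Z^\alpha(s-))^kd\tilde A^\alpha(s).
\]
The integrand is bounded by $R^k$, but it is not predictable with respect to a filtration making $\tilde A^\alpha$ a martingale, since the arrivals are renewal. I would avoid stochastic calculus and argue pathwise on a partition instead. Take a grid $0=t_0<\dots<t_m=T$ with mesh $\delta$, and freeze the integrand at $g_j=(\tilde Z^\alpha(t_j))^k$ on $(t_j,t_{j+1}]$. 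The frozen sum is then bounded by
\[
    \alpha^h\beta R^k\cdot 2m\|\tilde A^\alpha\|_T=O_p(\alpha^h m)\to0
\]
for fixed $m$, because $\|\tilde A^\alpha\|_T$ is tight by \eqref{eq:FCLT_primitives}. The freezing error is bounded, using $|x^k-y^k|\le kR^{k-1}|x-y|$ on $[0,R]$, by
\[
    \beta kR^{k-1}\,\omega_T(\tilde Z^\alpha,\delta)\,\bigl(\bar A^\alpha(T)+\lambda_\alpha T\bigr).
\]
By $C$-tightness of $\tilde Z^\alpha$ (Lemma~\ref{lem:HT_input_reduction}) this can be made small in probability by choosing $\delta$ small first and then sending $\alpha\downarrow0$. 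The same modulus argument also controls the difference between $\tilde Z^\alpha(s-)$ and $\tilde Z^\alpha(s)$ inside the $ds$ integral.

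\emph{Step 3 (conclusion).} Combining Steps 1 and 2 gives the claim on $\{\tau_R^\alpha\ge T\}$. Letting $R\to\infty$ then removes the localization and proves \eqref{eq:HT_abandonment_coupling}.
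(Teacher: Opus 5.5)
Your proposal is correct and follows essentially the same route as the paper: after discarding $\tilde M_{\mathrm{ab}}^\alpha$, you replace $q_i^\alpha$ by $\alpha^h\beta(\tilde Z^\alpha)^k$ via Lemma~\ref{lem:uniform_F_expand}, and then you swap $d\bar A^\alpha$ for $\lambda_\alpha\,ds$ by freezing the integrand on a grid. That grid step is exactly the paper's step-function estimate $2\|a_n-\lambda_n e\|_T\sum_j|b_j|$ plus a bounded-total-mass error. The only difference is the probabilistic wrapper: the paper passes to subsequences and uses the Skorokhod representation to reduce to a deterministic statement, whereas you bound the freezing error directly in probability via localization on $\{\|\tilde Z^\alpha\|_T\le R\}$ and the modulus $\omega_T(\tilde Z^\alpha,\delta)$ controlled by $C$-tightness, which works equally well.
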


\begin{proof}
Define $\widehat F_\alpha(x)\triangleq\alpha^{-h}F(\alpha^{1-h}x)$ for $x\ge0$.
Since $W_i^\alpha=Z^\alpha(T_i^\alpha-)$,
\[
    \tilde C_{\mathrm{ab}}^\alpha(t)
    =\int_0^t\widehat F_\alpha(\tilde Z^\alpha(s-))d\bar A^\alpha(s).
\]
Lemma~\ref{lem:uniform_F_expand} and $k(1-h)=h$ give
\begin{equation}\label{eq:HT_scaled_F_limit}
    \sup_{0\le x\le R}|\widehat F_\alpha(x)-\beta x^k|\to0
\end{equation}
for every $R<\infty$.

Take an arbitrary subsequence.
Lemma~\ref{lem:HT_input_reduction} and $\|\bar A^\alpha-\lambda_\alpha e\|_T\Rightarrow0$ yield a further subsequence on which
\[
    (\tilde Z^\alpha,\bar A^\alpha-\lambda_\alpha e)\Rightarrow(Z,0)
\]
for a continuous process $Z$.
The product $J_1$ space is Polish, so the Skorokhod representation theorem \citep[Theorem~6.7, p.~70]{billingsley1999convergence} permits a common realization with almost-sure $J_1$ convergence.
Since the limit is continuous, $\tilde Z^\alpha\to Z$ uniformly on $[0,T]$ and $\sup_{s\le T}|\tilde Z^\alpha(s-)-Z(s)|\to0$.
Equation \eqref{eq:HT_scaled_F_limit} therefore gives
\[
    \widehat F_\alpha(\tilde Z^\alpha(\cdot-))\to\beta Z(\cdot)^k
\]
uniformly.
We use the elementary deterministic fact that if nondecreasing $a_n$ satisfy $a_n(0)=0$, $\lambda_n\to\lambda<\infty$, $\|a_n-\lambda_ne\|_T\to0$, and $g_n\to g$ uniformly for continuous $g$, then
\[
    \left\|\int_0^\cdot g_n(s)da_n(s)-\lambda_n\int_0^\cdot g_n(s)ds\right\|_T\to0.
\]
For a step function $g=\sum_{j=1}^m b_j\mathds{1}_{(t_{j-1},t_j]}$, the displayed norm is at most $2\|a_n-\lambda_ne\|_T\sum_{j=1}^m|b_j|$, and uniform step approximation together with bounded total masses proves the claim.
Applying the claim with $a_n=\bar A^\alpha$ and $g_n=\widehat F_\alpha(\tilde Z^\alpha(\cdot-))$ first replaces $d\bar A^\alpha$ by $\lambda_\alpha ds$.
Both $g_n$ and $\beta(\tilde Z^\alpha)^k$ converge uniformly to $\beta Z^k$, so
\[
    \left\|
    \tilde C_{\mathrm{ab}}^\alpha
    -\lambda_\alpha\beta\int_0^\cdot(\tilde Z^\alpha(s))^kds
    \right\|_T\to0
\]
almost surely along the coupled subsequence.
Equation \eqref{eq:HT_martingales_negligible} and the first identity in \eqref{eq:HT_thinning_decompositions} prove \eqref{eq:HT_abandonment_coupling} by the subsequence criterion.
\end{proof}

\begin{proof}[Proof of Theorem~\ref{thm:HT_limit}]
Let
\[
    \tilde R_{\mathrm{ab}}^\alpha(t)
    \triangleq
    \frac1{\mu_\alpha}\bigl(\tilde A^\alpha(t)-\tilde A_0^\alpha(t)\bigr)
    -\beta\int_0^t(\tilde Z^\alpha(s))^kds.
\]
Lemma~\ref{lem:HT_input_reduction} gives $\int_0^T(\tilde Z^\alpha(s))^kds=O_p(1)$.
Equation~\eqref{eq:HT_abandonment_coupling} and $\lambda_\alpha/\mu_\alpha=\rho_\alpha\to1$ therefore imply $\|\tilde R_{\mathrm{ab}}^\alpha\|_T\Rightarrow0$.
Using \eqref{eq:HT_exact_scaled_balances}, the workload equation becomes
\[
    \tilde Z^\alpha(t)
    =\tilde y^\alpha(t)
    -\beta\int_0^t(\tilde Z^\alpha(s))^kds
    +\tilde L^\alpha(t),
\]
where
\[
    \tilde y^\alpha(t)
    \triangleq
    \tilde Z^\alpha(0)
    +\frac1{\mu_\alpha}\tilde A^\alpha(t)
    +\tilde S_{\mathrm{fl}}^\alpha(t)
    +c_\alpha t
    -\tilde R_{\mathrm{ab}}^\alpha(t).
\]
The primitive FCLT and Lemma~\ref{lem:HT_input_reduction} give
\[
    \tilde y^\alpha\Rightarrow y,
    \qquad
    y(t)=Z^*(0)+\mu^{-1}c_aB_a(\mu t)+\mu^{-1}c_sB_s(\mu t)+ct,
\]
jointly with the primitive limits.
Lemma~\ref{lem:Gamma_increment_monotone} and the continuous mapping theorem now give
\[
    (\tilde Z^\alpha,\tilde L^\alpha)\Rightarrow(Z^*,L^*),
\]
where $(Z^*,L^*)$ is the unique reflected solution of \eqref{eq:HT_limit_Zstar} on $[0,T]$.

Equation \eqref{eq:HT_abandonment_coupling} and the continuity of $x\mapsto\int_0^\cdot x(s)^kds$ at continuous paths give
\[
    \tilde A_0^\alpha
    \Rightarrow
    A_0^*,
    \qquad
    A_0^*(t)=c_aB_a(\mu t)-\mu\beta\int_0^t(Z^*(s))^kds.
\]
The first identity in \eqref{eq:HT_exact_scaled_balances} then gives
\[
    \tilde Y^\alpha
    \Rightarrow
    Y^*,
    \qquad
    Y^*(t)=\frac1\mu A_0^*(t)+\frac1\mu c_sB_s(\mu t).
\]
All convergences are joint because the processes are continuous mappings of the jointly convergent primitives and errors that vanish uniformly in probability.
Since $T>0$ is arbitrary, the asserted convergence holds on $\D(\R_+,\R^4)$.
\end{proof}

\subsection{Proof of Lemma~\ref{lm:var_expression}}

\begin{proof}
At $t=0$, both sides are zero, so fix $t>0$.
The proof rescales the stationary diffusion in Theorem~\ref{thm:HT_limit} to the base diffusion \eqref{eq:base_reflected_SDE} and then applies the same scaling to its effective input.
Set
\[
    \theta\triangleq\left(\frac{c_x^2}{2\mu\beta}\right)^{\frac{1}{k+1}}.
\]
The definitions in Lemma~\ref{lm:var_expression} are equivalently
\[
    \tau=\beta\theta^{k-1},
    \qquad
    \tilde c=\frac{c}{\beta\theta^k},
    \qquad
    2\theta^2\tau=\frac{c_x^2}{\mu}.
\]
The Brownian motions in \eqref{eq:HT_limit_Zstar} can be represented as $\sqrt{c_x^2/\mu}\,B(t)$ for a standard Brownian motion $B$.
For $u\ge0$, define
\[
    \widehat Z(u)\triangleq\theta^{-1}Z^*(u/\tau),
    \qquad
    \widehat L(u)\triangleq\theta^{-1}L^*(u/\tau).
\]
By Brownian scaling, we may reuse $B$ for the standard Brownian motion $\sqrt{\tau}B(\cdot/\tau)$, and
\[
    \frac{\sqrt{c_x^2/\mu}}{\theta\sqrt{\tau}}=\sqrt2,
    \qquad
    \frac{\beta\theta^{k-1}}{\tau}=1,
    \qquad
    \frac{c}{\theta\tau}=\tilde c.
\]
Substitution in \eqref{eq:HT_limit_Zstar} therefore gives
\[
    \widehat Z(u)
    =\widehat Z(0)+\sqrt2 B(u)
    +\int_0^u\left(\tilde c-\widehat Z(r)^k\right)dr
    +\widehat L(u).
\]
Positive space and time scaling preserve the reflection conditions.
Changing variables $x=\theta z$ in the stationary density \eqref{eq:stationary_dist} and using $2\mu\beta\theta^{k+1}=c_x^2$ gives $\widehat Z(0)\sim\pi_{\tilde c,k}$.
The uniqueness assertion in Theorem~\ref{thm:HT_limit}, specialized to the base coefficients and the same independent stationary initial law, now identifies $(\widehat Z,\widehat L)$ with $(Z^{\tilde c,k},L^{\tilde c,k})$ in law.
Since $c=\theta\tilde c\tau$, the definitions of $Y^*$ and \eqref{eq:base_Y_def} give
\[
    Y^*(t)
    =\theta\left(\widehat Z(\tau t)-\widehat Z(0)-\tilde c\tau t-\widehat L(\tau t)\right)
    \stackrel{d}{=}\theta Y^{\tilde c,k}(\tau t).
\]
Hence \eqref{eq:w} and $2\theta^2\tau=c_x^2/\mu$ imply
\[
    v(t;\Xi)
    =\theta^2v_{\tilde c,k}(\tau t)
    =2\theta^2\tau t\,w_{\tilde c,k}(\tau t)
    =\frac{c_x^2}{\mu}t\,w_{\tilde c,k}(\tau t).
\]
\end{proof}

\subsection{Proofs for the Response Functions and \texorpdfstring{$w_{c,k}$}{wck}}\label{app:wck_proofs}

We first establish the backward characterization and sensitivity identity, then derive the variance decomposition, and finally prove the qualitative properties of $w_{c,k}$.

\begin{proof}[Proof of Proposition~\ref{prop:psi_h_pde}]
Write $\mathcal Lf=f''+(c-x^k)f'$.
For $x>0$ and $R>x$, let $T_R\triangleq\inf\{s\ge0:Z^{c,k}(s)\ge R\}$.
The strong Markov property applied to \eqref{eq:psi_def} gives, with $\sigma_R\triangleq t\wedge T_0\wedge T_R$,
\[
    \psi_{c,k}(t,x)
    =\E_x\left[
        \exp\left\{-\int_0^{\sigma_R}q_k(Z^{c,k}(r))dr\right\}
        \psi_{c,k}(t-\sigma_R,Z^{c,k}(\sigma_R))
    \right].
\]
The same property applied to \eqref{eq:varphi_def} gives, with $\eta_R\triangleq t\wedge T_R$,
\[
    \varphi_{c,k}(t,x)
    =\E_x\left[
        \int_0^{\eta_R}(Z^{c,k}(r))^kdr
        +\varphi_{c,k}(t-\eta_R,Z^{c,k}(\eta_R))
    \right].
\]
Let $\overline Z^x$ be reflected Brownian motion with constant drift $c$, driven by the same Brownian motion as $Z^{c,k}$.
Since $c-z^k\le c$, the one-dimensional reflected comparison principle gives
\[
    0\le Z^{c,k}(s)\le \overline Z^x(s),
    \qquad s\ge0.
\]
Consequently, for every $p\ge1$ and $t<\infty$,
\[
    \E_x\left[\sup_{0\le s\le t}Z^{c,k}(s)^p\right]<\infty.
\]
In particular,
\[
    \mathbb P_x(T_R\le t)
    \le
    R^{-p}
    \E_x\left[\sup_{0\le s\le t}Z^{c,k}(s)^p\right]
    \longrightarrow0.
\]
The polynomial-growth condition on $\varphi$ also gives
\[
    \left|
    \varphi(t-\eta_R,Z^{c,k}(\eta_R))
    \right|
    \le
    C_t\left(
    1+\sup_{0\le s\le t}Z^{c,k}(s)^k
    \right),
\]
whose right-hand side is integrable.
Therefore dominated convergence permits $R\to\infty$ in both stopped identities.
The functions defined in \eqref{eq:psi_def} and \eqref{eq:varphi_def} satisfy the stopped identities by the strong Markov property.
Conversely, after letting $R\to\infty$, any bounded function satisfying the first identity equals the expectation in \eqref{eq:psi_def}, and any polynomial-growth function satisfying the second identity equals the expectation in \eqref{eq:varphi_def}.
Hence the stopped representations uniquely characterize $\psi_{c,k}$ and $\varphi_{c,k}$ in the stated classes.

The stopped identities also imply the interior backward equations in the viscosity sense.
Indeed, one stops the diffusion upon leaving a small parabolic neighborhood of an interior contact point, applies the stopped identity and It\^o's formula to a $C^{1,2}$ test function touching from above or below, divides by the expected stopping time, and shrinks the neighborhood.
The initial and Dirichlet boundary conditions follow directly from the definitions.
The Neumann boundary condition for $\varphi_{c,k}$ follows from the derivative identity proved below.
Finally, synchronous coupling of diffusions started from nearby initial states, followed by the preceding moment bound and dominated convergence, proves continuity.

It remains to prove the derivative identity.
Choose a bounded nondecreasing $C^\infty$ function $g_R:\R\to\R$ that equals $x^k$ on $[0,R]$, is constant on $[R+1,\infty)$, and satisfies $0\le g_R(x)\le x^k$ for $x\ge0$.
Set $b_R=c-g_R$, let $Z_R^x$ be the one-sided reflected diffusion with drift $b_R$, and define
\[
    \varphi_R(t,x)\triangleq\E_x\left[\int_0^t g_R(Z_R^x(s))ds\right].
\]
The coefficients $b_R$ and $g_R$ are bounded and Lipschitz.
By \citet[Proposition~2.8]{bossy2011stochastic}, the weak derivative of the reflected flow is
\[
    \partial_x Z_R^x(s)
    =\exp\left\{-\int_0^s g_R'(Z_R^x(r))dr\right\}\mathds{1}\{s<T_0^R\},
\]
where $T_0^R$ is the first hitting time of zero by $Z_R^x$.
For a compactly supported smooth test function, Fubini's theorem and the Sobolev chain rule therefore identify the weak derivative
\[
    \partial_x\varphi_R(t,x)
    =\E_x\left[\int_0^t
        g_R'(Z_R^x(s))
        \exp\left\{-\int_0^s g_R'(Z_R^x(r))dr\right\}
        \mathds{1}\{s<T_0^R\}ds
    \right].
\]
The integral inside the expectation equals
\[
    1-\exp\left\{-\int_0^{t\wedge T_0^R}g_R'(Z_R^x(r))dr\right\},
\]
so $0\le\partial_x\varphi_R\le1$.
Before $T_R$, the localized and original diffusions coincide and $g_R'=q_k$.
The moment bound, $\mathbb P_x(T_R\le t)\to0$, and the preceding uniform derivative bound imply, locally uniformly in $x$,
\[
    \varphi_R(t,x)\longrightarrow\varphi_{c,k}(t,x),
    \qquad
    \partial_x\varphi_R(t,x)\longrightarrow1-\psi_{c,k}(t,x).
\]
Passing to distributional derivatives gives $\partial_x\varphi_{c,k}=1-\psi_{c,k}$.
Since $\psi_{c,k}$ is continuous, the derivative has a continuous version on $[0,\infty)$, and its boundary value is zero because $\psi_{c,k}(t,0)=1$.
\end{proof}

\begin{proof}[Proof of Lemma~\ref{lm:w_variance_rep}]
Fix $c\in\R$ and an integer $k\ge1$.
Use the bounded localizations $g_R$, $b_R$, $Z_R$, and $\varphi_R$ from the preceding proof, with $Z_R(0)=Z^{c,k}(0)$ and the same Brownian motion.
For $n\ge1$, let
\[
    Z_{R,n}(s)=Z_R(0)+\int_0^s b_R(Z_{R,n}(r))dr+\sqrt2B(s)
    +n\int_0^s(Z_{R,n}(r))^-dr,
\]
and define $\varphi_{R,n}(t,x)\triangleq\E_x[\int_0^t g_R(Z_{R,n}(s))ds]$.
After replacing $x^-$ by smooth monotone approximations, the classical Feynman--Kac formula and It\^o's formula give
\[
    \int_0^t g_R(Z_{R,n}(s))ds
    =\varphi_{R,n}(t,Z_R(0))
    +\sqrt2\int_0^t\partial_x\varphi_{R,n}(t-s,Z_{R,n}(s))dB(s).
\]
The derivative of the penalized flow lies in $[0,1]$ because both $b_R'$ and the derivative of the penalty are nonpositive.

After passing to the limit in the smooth monotone approximations of
$x^-$, write $Z_{R,n}^x$ and $Z_R^x$ for the penalized and reflected
flows started from $x$.  Their flow-derivative representations give,
for $0\le u\le t$,
\[
\partial_x\varphi_{R,n}(u,x)
=
\E_x\!\left[
    \int_0^u
    g_R'(Z_{R,n}^x(r))
    \partial_x Z_{R,n}^x(r)\,dr
\right],
\]
and
\[
\partial_x\varphi_R(u,x)
=
\E_x\!\left[
    \int_0^u
    g_R'(Z_R^x(r))
    \partial_x Z_R^x(r)\,dr
\right].
\]
The path convergence in
\citet[Proposition~2.5]{bossy2011stochastic}, the reflected-flow
derivative representation in
\citet[Proposition~2.8]{bossy2011stochastic}, and the strong
$L^2$ convergence of the penalized flow derivatives in
\citet[Lemma~3.3]{bossy2011stochastic}, together with the boundedness of
$g_R'$ and the uniform derivative bound, imply by localization in the
state variable and dominated convergence that
\begin{equation}
\label{eq:penalized-integrand-convergence}
\lim_{n\to\infty}
\E\!\left[
\int_0^t
\left|
\partial_x\varphi_{R,n}(t-s,Z_{R,n}(s))
-
\partial_x\varphi_R(t-s,Z_R(s))
\right|^2 ds
\right]
=0.
\end{equation}
Consequently, It\^o's isometry yields
\[
\begin{aligned}
&\E\!\left[
\left|
\sqrt{2}\int_0^t
\partial_x\varphi_{R,n}(t-s,Z_{R,n}(s))\,dB(s)
-
\sqrt{2}\int_0^t
\partial_x\varphi_R(t-s,Z_R(s))\,dB(s)
\right|^2
\right]
\\
&\qquad =
2\E\!\left[
\int_0^t
\left|
\partial_x\varphi_{R,n}(t-s,Z_{R,n}(s))
-
\partial_x\varphi_R(t-s,Z_R(s))
\right|^2 ds
\right]
\longrightarrow 0.
\end{aligned}
\]
The boundedness and Lipschitz continuity of $g_R$, together with the
path convergence of $Z_{R,n}$ to $Z_R$, similarly imply
\[
\int_0^t g_R(Z_{R,n}(s))\,ds
\longrightarrow
\int_0^t g_R(Z_R(s))\,ds
\]
and
\[
\varphi_{R,n}(t,Z_R(0))
\longrightarrow
\varphi_R(t,Z_R(0))
\]
in $L^2$.  Passing to the limit in the penalized identity therefore
gives
\[
\int_0^t g_R(Z_R(s))\,ds
=
\varphi_R(t,Z_R(0))
+
\sqrt{2}\int_0^t
\partial_x\varphi_R(t-s,Z_R(s))\,dB(s).
\]
The reflected identity has no local-time term because $\partial_x\varphi_R(t,0)=0$.
The comparison bound, $0\le g_R(x)\le x^k$, and localization imply that the first two terms converge in $L^2$ to their untruncated counterparts as $R\to\infty$.
The derivative formula in the preceding proof, its bound by one, and It\^o's isometry give convergence of the stochastic integrals in $L^2$.
Consequently,
\[
    \int_0^t (Z^{c,k}(s))^kds
    =\varphi_{c,k}(t,Z^{c,k}(0))
    +\sqrt2\int_0^t
    \partial_x\varphi_{c,k}(t-s,Z^{c,k}(s))dB(s).
\]
Substituting this identity into \eqref{eq:base_Y_def} and using \eqref{eq:varphi_x_psi} yields
\[
    Y^{c,k}(t)-\E[Y^{c,k}(t)]
    =-\left(\varphi_{c,k}(t,Z^{c,k}(0))
    -\E_{\pi_{c,k}}[\varphi_{c,k}(t,Z)]\right)+\sqrt2\int_0^t
    \psi_{c,k}(t-s,Z^{c,k}(s))dB(s).
\]
We take the stationary initial state to be independent of the future Brownian increments.
The first term is measurable at time zero, while the stochastic integral has conditional mean zero given the initial state.
Their covariance is therefore zero.
It\^o's isometry and stationarity give
\[
    v_{c,k}(t)
    =\Var_{\pi_{c,k}}\left(\varphi_{c,k}(t,Z)\right)
    +2\int_0^t\E_{\pi_{c,k}}\left[\psi_{c,k}(u,Z)^2\right]du.
\]
Dividing by $2t$ proves \eqref{eq:w_variance_rep}.
\end{proof}

\begin{proof}[Proof of Proposition~\ref{prop:w}]
Fix $c\in\R$ and an integer $k\ge1$.
For this proof only, write $\pi=\pi_{c,k}$, $\pi_0=\pi_{c,k}(0)$, $\overline\Pi(x)=\int_x^\infty\pi(y)dy$, $\varphi_t(x)=\varphi_{c,k}(t,x)$, and $\psi_t(x)=\psi_{c,k}(t,x)$.
Here $\pi_0$ is the value of the stationary density at zero, not an atom.
The representation \eqref{eq:psi_def} gives $0<\psi_t(x)\le1$ and shows that $t\mapsto\psi_t(x)$ is nonincreasing.
For every $x>0$, the decrease is strict because the diffusion has positive probability of remaining in a compact subinterval of $(0,\infty)$ during any prescribed positive interval.

Let $P_t$ be the stationary Markov semigroup of $Z^{c,k}$ and set $g(x)=x^k$.
The Markov property gives $\varphi_t=\int_0^tP_sg\,ds$ in $L^2(\pi)$.
Jensen's inequality makes $P_t$ a contraction on $L^2(\pi)$, while path continuity gives strong continuity first on bounded continuous functions and then on all of $L^2(\pi)$ by density.
Since $g\in L^2(\pi)$, the map $t\mapsto\varphi_t$ is continuously differentiable in $L^2(\pi)$ with derivative $P_tg$.
Moreover,
\[
    \frac{P_h\varphi_t-\varphi_t}{h}
    =\frac1h\int_t^{t+h}P_sg\,ds-\frac1h\int_0^hP_sg\,ds
    \longrightarrow P_tg-g
\]
in $L^2(\pi)$, so $\varphi_t\in\operatorname{Dom}(\mathcal L)$ and $\mathcal L\varphi_t=P_tg-g$.
The $L^2$ differentiability of $\varphi_t$ and dominated convergence in \eqref{eq:psi_def} justify differentiating the variance representation, which gives
\[
    \frac12v_{c,k}'(t)
    =\Cov_\pi(\varphi_t,P_tg)+\E_\pi[\psi_t(Z)^2].
\]
Define $m_{c,k}(t)\triangleq v_{c,k}'(t)/2$ for $t>0$.
Since $\mathcal Lf=\pi^{-1}(\pi f')'$, the generator identity implies $(\pi\partial_x\varphi_t)'=\pi\mathcal L\varphi_t$ in the distributional sense.
The right-hand side is locally integrable, so $\pi\partial_x\varphi_t$ is locally absolutely continuous and integration by parts gives
\[
    \Cov_\pi(\varphi_t,\mathcal L\varphi_t)
    =-\E_\pi[(\partial_x\varphi_t(Z))^2].
\]
The boundary term is zero at the origin because $\partial_x\varphi_t(0)=0$, and it is zero at infinity because $\partial_x\varphi_t$ is bounded, $\varphi_t$ grows at most linearly by \eqref{eq:varphi_x_psi}, and $\pi$ has a super-exponential tail.
Consequently,
\[
    m_{c,k}(t)
    =-\E_\pi[(\partial_x\varphi_t(Z))^2]
    +\Cov_\pi(\varphi_t(Z),Z^k)
    +\E_\pi[\psi_t(Z)^2].
\]
Since $x^k\pi(x)=c\pi(x)-\pi'(x)$, integration by parts gives
\[
    \E_\pi[\varphi_t(Z)Z^k]
    =c\E_\pi[\varphi_t(Z)]
    +\E_\pi[\partial_x\varphi_t(Z)]
    +\pi_0\varphi_t(0).
\]
The same calculation with $\varphi_t\equiv1$ gives $\E_\pi[Z^k]=c+\pi_0$.
Therefore
\[
    \Cov_\pi(\varphi_t(Z),Z^k)
    =\E_\pi[\partial_x\varphi_t(Z)]
    +\pi_0\left(\varphi_t(0)-\E_\pi[\varphi_t(Z)]\right).
\]
Moreover,
\[
    \E_\pi[\varphi_t(Z)]-\varphi_t(0)
    =\int_0^\infty\partial_x\varphi_t(x)\overline\Pi(x)dx
    =\int_0^\infty(1-\psi_t(x))\overline\Pi(x)dx.
\]
Substitution and \eqref{eq:varphi_x_psi} yield
\begin{equation}\label{eq:w_derivative_compact}
    m_{c,k}(t)
    =\E_\pi[\psi_t(Z)]
    -\pi_0\int_0^\infty(1-\psi_t(x))\overline\Pi(x)dx.
\end{equation}
The first term is strictly decreasing, while the nonnegative quantity subtracted from it is nondecreasing.
Thus $m_{c,k}$ is strictly decreasing.

We next identify the limit of $\psi_t$ without invoking a separate parabolic long-time theorem.
The function $y\mapsto\int_0^y\exp\{-cu+u^{k+1}/(k+1)\}du$ is $\mathcal L$-harmonic and vanishes at zero.
The diffusion coefficient is nonzero and the drift is bounded on $[0,R]$, so $T_0\wedge T_R<\infty$ almost surely.
Indeed, over a sufficiently short fixed interval every starting point in $[0,R]$ has a probability bounded away from zero of exiting, and the strong Markov property then gives a geometric tail for the exit time.
Optional stopping at $T_0\wedge T_R$ therefore gives
\[
    \mathbb P_x(T_R<T_0)
    =\frac{\int_0^x\exp\{-cy+y^{k+1}/(k+1)\}dy}
    {\int_0^R\exp\{-cy+y^{k+1}/(k+1)\}dy}.
\]
The denominator diverges as $R\to\infty$.
On $\{T_0=\infty\}$ the process must exit every bounded interval through its upper endpoint, so the preceding probability implies $T_0<\infty$ almost surely.
Define
\begin{equation}\label{eq:psi_infty_explicit}
    \psi_\infty(x)\triangleq\frac{\pi_0\overline\Pi(x)}{\pi(x)}.
\end{equation}
L'H\^opital's rule gives
\[
    \frac{\overline\Pi(x)}{\pi(x)/(x^k-c)}\longrightarrow1,
    \qquad x\to\infty,
\]
because the derivatives of the numerator and denominator are $-\pi(x)$ and $-\pi(x)(1+o(1))$, respectively.
Thus $\psi_\infty$ is bounded and tends to zero at infinity.
Direct differentiation gives
\[
    \mathcal L\psi_\infty-q_k\psi_\infty=0,
    \qquad
    \psi_\infty(0)=1.
\]
The stopped Feynman--Kac identity on $[0,R]$ gives
\[
    \psi_\infty(x)
    =\E_x\left[
        e^{-\int_0^{T_0}q_k(Z^{c,k}(s))ds};\,T_0<T_R
    \right] +\psi_\infty(R)\E_x\left[
        e^{-\int_0^{T_R}q_k(Z^{c,k}(s))ds};\,T_R<T_0
    \right].
\]
The second term vanishes as $R\to\infty$ because $\psi_\infty(R)\to0$.
Since $T_0<\infty$ almost surely, monotone convergence gives
\[
    \psi_\infty(x)
    =\E_x\left[
        \exp\left\{-\int_0^{T_0}q_k(Z^{c,k}(s))ds\right\}
    \right].
\]
Monotone convergence in \eqref{eq:psi_def} now gives $\psi_t(x)\downarrow\psi_\infty(x)$.
Using \eqref{eq:psi_infty_explicit}, equation \eqref{eq:w_derivative_compact} becomes
\[
    m_{c,k}(t)
    =\E_\pi[\psi_t(Z)-\psi_\infty(Z)+\psi_t(Z)\psi_\infty(Z)].
\]
Hence $m_{c,k}(t)>0$, and dominated convergence gives
\[
    m_{c,k}(t)\downarrow
    \E_\pi[\psi_\infty(Z)^2]
    =\pi_0^2\int_0^\infty\frac{\overline\Pi(x)^2}{\pi(x)}dx.
\]
Its integrand is asymptotic to $\pi(x)/(x^k-c)^2$, so the final integral is finite.
The same formula, $\psi_t\to1$ as $t\downarrow0$, and $\int_0^\infty\overline\Pi(x)dx<\infty$ give $m_{c,k}(t)\to1$.
Set $m_{c,k}(0)\triangleq1$.
Since $v_{c,k}(0)=0$,
\begin{equation}\label{eq:w_average_m}
    w_{c,k}(t)=\frac1t\int_0^t m_{c,k}(u)du.
\end{equation}
The positivity, monotonicity, and endpoint values of $m_{c,k}$ imply $0<w_{c,k}(t)\le1$ and $w_{c,k}(t)\to1$ as $t\downarrow0$.
Because the average of a strictly decreasing function strictly exceeds its value at the right endpoint, \eqref{eq:w_average_m} gives $w_{c,k}'(t)<0$ for $t>0$.
The Ces\`aro limit in \eqref{eq:w_average_m} proves \eqref{eq:w_infty_formula}.

We next prove joint continuity.
Let $c_n\to c$ and $t_n\to t>0$.
For $c_n$ in a fixed compact interval, the densities in \eqref{eq:pi_ck} and all their polynomially weighted versions are dominated by one integrable super-exponential envelope.
Dominated convergence therefore gives total-variation convergence of the stationary laws and convergence of every polynomial moment.
Under the common-uniform quantile coupling, the quantiles converge almost everywhere, and the uniform higher-moment bound gives $Z_n(0)\to Z(0)$ in $L^p$ for every finite $p$.
Drive the reflected diffusions by the same Brownian motion.
Tanaka's formula gives
\[
    (Z_n(s)-Z(s))^+
    \le (Z_n(0)-Z(0))^++s(c_n-c)^+,
\]
because on $\{Z_n>Z\}$ the monotone drift term is nonpositive, the regulator of $Z_n$ is inactive, and the regulator of $Z$ enters with a nonpositive sign.
Interchanging $n$ and the limit process yields, for every $T<\infty$,
\[
    \sup_{0\le s\le T}|Z_n(s)-Z(s)|
    \le |Z_n(0)-Z(0)|+T|c_n-c|.
\]
The inequality $|x^k-y^k|\le k|x-y|(x^{k-1}+y^{k-1})$, the preceding bound, and the stationary moment bounds control the common interval $[0,t_n\wedge t]$.
Stationarity and $|t_n-t|\to0$ control the remaining interval, so
\[
    \int_0^{t_n}Z_n(s)^kds
    \longrightarrow
    \int_0^tZ(s)^kds
    \qquad\text{in }L^2.
\]
Together with $B(t_n)\to B(t)$ in $L^2$, equation \eqref{eq:base_Y_def} gives $Y^{c_n,k}(t_n)\to Y^{c,k}(t)$ in $L^2$.
Thus $w_{c_n,k}(t_n)\to w_{c,k}(t)$ for $t>0$.
Let $C\subset\R$ be compact.
The variance representation, $w_{c,k}(t)\le1$, and $1-e^{-x}\le x$ give
\[
    0\le1-w_{c,k}(t)
    \le\frac1t\int_0^t\E_{\pi_{c,k}}[1-\psi_{c,k}(u,Z)^2]du
    \le t\E_{\pi_{c,k}}[q_k(Z)].
\]
The second inequality uses $1-\psi^2\le2(1-\psi)$, $1-e^{-x}\le x$, and stationarity.
The final expectation is bounded uniformly over $c\in C$ by \eqref{eq:pi_ck}.
Hence $w_{c,k}(t)\to1$ as $t\downarrow0$ uniformly on compact $c$-sets, which completes the proof of joint continuity.

It remains to prove the load endpoints in \eqref{eq:w_infty_formula}.
First let $c=-a$ with $a\to\infty$, and let $Z\sim\pi_{-a,k}$.
The scaled variable $U_a=aZ$ has density proportional to
\[
    \exp\left\{-u-\frac{u^{k+1}}{(k+1)a^{k+1}}\right\},
    \qquad u\ge0,
\]
so its laws are tight and converge to $\operatorname{Exp}(1)$.
Set
\[
    A_a\triangleq\int_0^\infty
    \exp\left\{-v-\frac{v^{k+1}}{(k+1)a^{k+1}}\right\}dv.
\]
Equation \eqref{eq:psi_infty_explicit} gives
\[
    \psi_\infty(u/a)
    =\frac{\exp\left\{u+\frac{u^{k+1}}{(k+1)a^{k+1}}\right\}}{A_a}
    \int_u^\infty
    \exp\left\{-v-\frac{v^{k+1}}{(k+1)a^{k+1}}\right\}dv.
\]
This expression converges to $1$ uniformly for $u$ in compact subsets of $[0,\infty)$.
For every $M<\infty$,
\[
    \E[1-\psi_\infty(Z)^2]
    \le\sup_{0\le u\le M}[1-\psi_\infty(u/a)^2]
    +\mathbb P(U_a>M).
\]
First letting $a\to\infty$ and then $M\to\infty$ proves $w_{-a,k}(\infty)=\E[\psi_\infty(Z)^2]\to1$.

Finally let $c\to\infty$, set $r=c^{1/k}$, and let $Z\sim\pi_{c,k}$.
The density of $U_c=Z/r$ is proportional to
\[
    \exp\left\{c^{(k+1)/k}\left(u-\frac{u^{k+1}}{k+1}\right)\right\},
    \qquad u\ge0.
\]
Lemma~\ref{lem:laplace_concentration_tail} applies because $u-u^{k+1}/(k+1)$ has the unique maximizer $u=1$, and hence $U_c\Rightarrow1$.
Equation \eqref{eq:psi_infty_explicit} and $\overline\Pi(x)\le1$ give
\[
    0<\psi_\infty(x)
    \le\exp\left\{-cx+\frac{x^{k+1}}{k+1}\right\}.
\]
Choose $\varepsilon>0$ and $\eta>0$ such that $u-u^{k+1}/(k+1)\ge\eta$ on $[1-\varepsilon,1+\varepsilon]$.
Then
\[
    \E[\psi_\infty(Z)^2]
    \le e^{-2\eta c^{(k+1)/k}}
    +\mathbb P(|U_c-1|>\varepsilon)
    \longrightarrow0.
\]
This proves the overloaded endpoint and completes the proof.
\end{proof}

\subsection{Proof of Lemma~\ref{lm:var_lim}}

\begin{proof}
Suppress the stationary subscript $e$ and set $p\triangleq1/(\rho\vee1)$.
Let $\mathcal A$ be the sigma-field generated by the stationary arrival process, and set $\mathcal H_i^\alpha\triangleq\mathcal A\vee\sigma\{Z^\alpha(0),(V_j,D_j^\alpha):1\le j<i\}$.
Then $W_i^\alpha=Z^\alpha(T_i-)$ is $\mathcal H_i^\alpha$-measurable, while $(V_i,D_i^\alpha)$ is independent of $\mathcal H_i^\alpha$.
Define
\[
    p_i^\alpha\triangleq\bar F(\alpha W_i^\alpha),
    \qquad
    X_i^\alpha\triangleq V_i\mathds{1}\{D_i^\alpha>W_i^\alpha\},
    \qquad
    M_i^\alpha\triangleq X_i^\alpha-\frac{p_i^\alpha}{\mu}.
\]
The conditional moments are
\[
    \E[M_i^\alpha\mid\mathcal H_i^\alpha]=0,
    \qquad
    \E[(M_i^\alpha)^2\mid\mathcal H_i^\alpha]
    =\frac{(1+c_s^2)p_i^\alpha-(p_i^\alpha)^2}{\mu^2}.
\]
Set $P_\alpha(t)\triangleq\sum_{i=1}^{A(t)}p_i^\alpha$ and $M_\alpha(t)\triangleq\sum_{i=1}^{A(t)}M_i^\alpha$.
Because $\{i\le A(t)\}\in\mathcal A$, orthogonality of the stopped martingale differences and $L^2$ truncation give
\[
    \E[M_\alpha(t)\mid\mathcal A]=0,
    \qquad
    \E[M_\alpha(t)^2]
    =\frac1{\mu^2}\E\left[\sum_{i=1}^{A(t)}\bigl((1+c_s^2)p_i^\alpha-(p_i^\alpha)^2\bigr)\right].
\]

Use the convention that a maximum over no arrivals equals zero.
The workload dynamics imply
\[
    \max_{1\le i\le A(t)}|W_i^\alpha-Z^\alpha(0)|
    \le t+\sum_{i=1}^{A(t)}V_i.
\]
If $\rho<1$, stationary coupling bounds $Z^\alpha(0)$ by the finite stationary workload of the queue that accepts every arrival, so $\alpha Z^\alpha(0)\to0$ in probability.
If $\rho>1$, the asserted convergence follows from the stationary fluid-limit assumption in the lemma.
Since $\bar F$ is globally Lipschitz with constant $\|f\|_\infty$, both cases yield
\[
    \max_{1\le i\le A(t)}|p_i^\alpha-p|
    \le \|f\|_\infty\alpha\left(t+\sum_{i=1}^{A(t)}V_i\right)
    +|\bar F(\alpha Z^\alpha(0))-p|
    \xrightarrow{\mathbb P}0.
\]
Assumption~\ref{assumption} gives $\E[A(t)^2]<\infty$, and
\[
    |P_\alpha(t)-pA(t)|^2
    \le A(t)^2\max_{1\le i\le A(t)}|p_i^\alpha-p|^2.
\]
The right-hand side converges to zero in probability and is dominated by the integrable variable $A(t)^2$.
Uniform integrability therefore gives
\[
    P_\alpha(t)-pA(t)\longrightarrow0\quad\text{in }L^2.
\]
The same maximum estimate applied to $x\mapsto(1+c_s^2)x-x^2$ gives
\[
    \E\left[\sum_{i=1}^{A(t)}\bigl((1+c_s^2)p_i^\alpha-(p_i^\alpha)^2\bigr)\right]
    \longrightarrow \lambda t\bigl((1+c_s^2)p-p^2\bigr).
\]
Consequently,
\[
    \E[M_\alpha(t)^2]
    \longrightarrow \frac{\lambda t}{\mu^2}\bigl(p(1-p)+c_s^2p\bigr).
\]

Since $Y^\alpha(t)=P_\alpha(t)/\mu+M_\alpha(t)$, the $L^2$ convergence above gives
\[
    \E[Y^\alpha(t)]\longrightarrow\frac{p\lambda t}{\mu},
    \qquad
    \Var(Y^\alpha(t))
    \longrightarrow\frac{p^2}{\mu^2}\Var(A(t))
    +\frac{\lambda t}{\mu^2}\bigl(p(1-p)+c_s^2p\bigr).
\]
Indeed, $\Cov(pA(t),M_\alpha(t))=0$ because $\E[M_\alpha(t)\mid\mathcal A]=0$, while the covariance with $P_\alpha(t)-pA(t)$ is $o(1)$ by Cauchy--Schwarz.
Finally, $p\lambda/\mu=\rho\wedge1$ and $I_a(t)=\Var(A(t))/(\lambda t)$, which gives both limits in the lemma.
\end{proof}

\subsection{Auxiliary Lemmas for the Unified RQ Proof}\label{sec:RQ_HT_auxiliary}

The following four lemmas isolate the only analytic ingredients shared by the first and refined RQ arguments.
They are stated immediately before the unified proof so that every dependency used there has already been established.

\begin{lemma}[Convergence of localized suprema]\label{lem:localized_suprema}
Let $K\subset[0,\infty)$ be compact.
Let $\phi_\alpha$ and $\phi$ be real-valued functions on $K\times[0,\infty)$, and suppose that $\phi_\alpha(z,0)=\phi(z,0)=0$ for all $z\in K$.
Assume that, for every $M<\infty$,
\[
    \sup_{z\in K}\sup_{0\le u\le M} |\phi_\alpha(z,u)-\phi(z,u)| \to 0 .
\]
Assume also that there exist constants $\eta>0$, $C<\infty$, and $\alpha_0>0$ such that, for all $\alpha<\alpha_0$, all $z\in K$, and all $u\ge0$,
\[
    \phi_\alpha(z,u)\le -\eta u+C\sqrt u, \qquad \phi(z,u)\le -\eta u+C\sqrt u .
\]
Then
\[
    \sup_{z\in K} \left| \sup_{u\ge0}\phi_\alpha(z,u) - \sup_{u\ge0}\phi(z,u) \right| \to 0 .
\]
\end{lemma}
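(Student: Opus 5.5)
The plan is a localization argument: use the common upper envelope to confine every relevant supremum to a fixed compact range of $u$, and then apply the assumed uniform convergence on that range.

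\emph{Step 1: lower bound and a uniform horizon.} Since $\phi_\alpha(z,0)=\phi(z,0)=0$, both suprema are nonnegative, that is, $\sup_{u\ge0}\phi_\alpha(z,u)\ge0$ and $\sup_{u\ge0}\phi(z,u)\ge0$ for every $z\in K$. The envelope $-\eta u+C\sqrt u$ is strictly negative for $u>(C/\eta)^2$. Set $M\triangleq (C/\eta)^2+1$. Then for all $\alpha<\alpha_0$ and all $z\in K$, any $u>M$ gives $\phi_\alpha(z,u)<0\le \phi_\alpha(z,0)$, and the same holds for $\phi$. Hence
\[
    \sup_{u\ge0}\phi_\alpha(z,u)=\sup_{0\le u\le M}\phi_\alpha(z,u),
    \qquad
    \sup_{u\ge0}\phi(z,u)=\sup_{0\le u\le M}\phi(z,u),
\]
with the same $M$ for every $z\in K$.

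\emph{Step 2: compare the localized suprema.} For functions $g_1,g_2$ on $[0,M]$ we have the elementary bound $|\sup g_1-\sup g_2|\le\sup|g_1-g_2|$. This is valid here because both suprema are finite: by the envelope each is at most $\sup_{u\ge0}(-\eta u+C\sqrt u)=C^2/(4\eta)$, and each is at least $0$. Applying this bound with $g_1=\phi_\alpha(z,\cdot)$ and $g_2=\phi(z,\cdot)$ and then taking the supremum over $z\in K$ gives
\[
    \sup_{z\in K}\left|\sup_{u\ge0}\phi_\alpha(z,u)-\sup_{u\ge0}\phi(z,u)\right|
    \le\sup_{z\in K}\sup_{0\le u\le M}|\phi_\alpha(z,u)-\phi(z,u)|.
\]
The right-hand side tends to zero by the assumed uniform convergence on $K\times[0,M]$, which proves the lemma.

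The only delicate point is the localization in Step 1. The horizon $M$ must not depend on $\alpha$ or $z$, and this is exactly what the common envelope with constants $\eta$, $C$, $\alpha_0$ that are uniform in $\alpha$ and $z$ guarantees. No continuity or compactness in $z$ is used beyond this uniformity.
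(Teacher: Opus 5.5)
Your proposal is correct and follows the same localization argument as the paper: the common envelope $-\eta u+C\sqrt u$, together with $\phi_\alpha(z,0)=\phi(z,0)=0$, confines both suprema to a horizon $[0,M]$ that does not depend on $\alpha$ or $z$. Uniform convergence on $K\times[0,M]$ then finishes the proof via $|\sup g_1-\sup g_2|\le\sup|g_1-g_2|$. Your version simply makes explicit the choice $M=(C/\eta)^2+1$ and the finiteness of the suprema, which the paper leaves implicit.
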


\begin{proof}
Choose $M$ large enough that $-\eta u+C\sqrt u<0$ for all $u\ge M$.
Since $\phi_\alpha(z,0)=\phi(z,0)=0$, the supremum over $u\ge0$ equals the supremum over $0\le u\le M$, uniformly in $z\in K$ and all sufficiently small $\alpha$.
Hence
\[
    \sup_{u\ge0}\phi_\alpha(z,u)=\sup_{0\le u\le M}\phi_\alpha(z,u),
    \qquad \sup_{u\ge0}\phi(z,u)=\sup_{0\le u\le M}\phi(z,u).
\]
The conclusion follows from the compact-uniform convergence on $K\times[0,M]$.
\end{proof}

\begin{lemma}[IDW convergence under growing time changes]\label{lem:IDW_uniform_time_change}
Assume that $I_w(t)\to c_x^2\in(0,\infty)$ as $t\to \infty$, and that $\|I_w\|_\infty\triangleq\sup_{t\ge0}I_w(t)<\infty$.
Let $r_\alpha\to \infty$, and let $a_\alpha\to a>0$.
Then, for every $M<\infty$,
\[
    \sup_{0\le u\le M} \left| \sqrt{a_\alpha u I_w(r_\alpha u)} - \sqrt{a c_x^2u} \right| \to 0 .
\]
\end{lemma}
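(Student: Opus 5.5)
We will prove the bound by splitting $[0,M]$ into a short initial piece near the origin, where both square roots are uniformly small, and the remainder, where $r_\alpha u$ is large and $I_w(r_\alpha u)$ is close to $c_x^2$. The square root is nonlinear, so we will use the two elementary inequalities
\[
    |\sqrt x-\sqrt y|\le\sqrt{|x-y|},
    \qquad
    |\sqrt x-\sqrt y|\le\frac{|x-y|}{\sqrt y}\quad(y>0),
\]
for $x,y\ge0$. Uniform continuity of the square root on $[0,\infty)$ is all that is needed, and the first inequality already delivers it.

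Fix $\varepsilon>0$ and choose $\delta\in(0,M]$ small. For $0\le u\le\delta$, both terms are bounded by square roots of small quantities. Since $\sup_\alpha a_\alpha<\infty$ and $\|I_w\|_\infty<\infty$, we have $\sqrt{a_\alpha uI_w(r_\alpha u)}\le\sqrt{(\sup_\alpha a_\alpha)\|I_w\|_\infty\delta}$ and $\sqrt{ac_x^2u}\le\sqrt{ac_x^2\delta}$. Both bounds are below $\varepsilon/2$ once $\delta$ is small, uniformly in $\alpha$.

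For $\delta\le u\le M$, we have $r_\alpha u\ge r_\alpha\delta\to\infty$. Hence $\sup_{\delta\le u\le M}|I_w(r_\alpha u)-c_x^2|\le\sup_{t\ge r_\alpha\delta}|I_w(t)-c_x^2|\to0$ by the assumed limit $I_w(t)\to c_x^2$. Together with $a_\alpha\to a$, this gives
\[
    \sup_{\delta\le u\le M}\bigl|a_\alpha uI_w(r_\alpha u)-ac_x^2u\bigr|
    \le M\Bigl(|a_\alpha-a|\,\|I_w\|_\infty+a\sup_{t\ge r_\alpha\delta}|I_w(t)-c_x^2|\Bigr)\to0 .
\]
Applying $|\sqrt x-\sqrt y|\le\sqrt{|x-y|}$ shows that the supremum of the difference of square roots over $[\delta,M]$ tends to zero. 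Combining this with the first piece gives $\limsup_{\alpha\downarrow0}\sup_{0\le u\le M}|\cdot|\le\varepsilon$, and $\varepsilon$ is arbitrary.

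There is no genuine obstacle. The one point needing care is the region near $u=0$: there $r_\alpha u$ need not be large, so convergence of $I_w$ cannot be invoked, and this is why the boundedness $\|I_w\|_\infty<\infty$ is essential. The $\delta$-split handles it. In fact the split is not even required, because $|\sqrt x-\sqrt y|\le\sqrt{|x-y|}$ reduces everything to showing $\sup_{u\le M}u|I_w(r_\alpha u)-c_x^2|\to0$. That follows by bounding this quantity by $2\|I_w\|_\infty\delta$ (using $c_x^2\le\|I_w\|_\infty$) on $[0,\delta]$ and by $M\sup_{t\ge r_\alpha\delta}|I_w(t)-c_x^2|$ on $[\delta,M]$.
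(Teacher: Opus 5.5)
Your proposal is correct and follows essentially the same route as the paper. Both arguments split $[0,M]$ at a small $\delta$, bound both square roots near the origin using $\|I_w\|_\infty$ and the eventual boundedness of $a_\alpha$, and use $r_\alpha\delta\to\infty$ with $I_w(t)\to c_x^2$ on $[\delta,M]$. Your inequality $|\sqrt{x}-\sqrt{y}|\le\sqrt{|x-y|}$ simply makes explicit the uniform passage through the square root that the paper leaves implicit.
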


\begin{proof}
Fix $M<\infty$ and $\varepsilon>0$.
If $M=0$, the claim is immediate.
Let $\bar a<\infty$ be such that $a_\alpha\le \bar a$ for all sufficiently small $\alpha$ and $a\le\bar a$.
Choose $\eta\in(0,M]$ so small that
\[
    2\sqrt{\bar a\|I_w\|_\infty\eta}<\varepsilon/2 .
\]
For $0\le u\le\eta$, both square-root terms are bounded by $\sqrt{\bar a\|I_w\|_\infty\eta}$ for all sufficiently small $\alpha$.
On $\eta\le u\le M$, $r_\alpha u\to\infty$ uniformly, so $I_w(r_\alpha u)\to c_x^2$ uniformly.
Together with $a_\alpha\to a$, this gives uniform convergence on $[\eta,M]$.
Letting $\eta\downarrow0$ proves the claim.
\end{proof}

\begin{lemma}[Coefficient squeeze for a degenerate RQ supremum]\label{lem:coefficient_squeeze}
Let $0\le H_\alpha(u)\le \bar H<\infty$, let $0\le\varepsilon_\alpha\to0$, and suppose that $x_\alpha = \sup_{u\ge0} \left\{ a_\alpha u+\varepsilon_\alpha\sqrt{uH_\alpha(u)} \right\}$ is finite.
Then $a_\alpha\le0$.
If, in addition, $\liminf_{\alpha\downarrow0}x_\alpha>0$, then $a_\alpha\to 0$.
\end{lemma}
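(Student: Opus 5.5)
The plan is an elementary argument by contradiction on the growth of the objective, followed by a direct bound on the supremum. Write the objective as $g_\alpha(u)\triangleq a_\alpha u+\varepsilon_\alpha\sqrt{uH_\alpha(u)}$. Because $H_\alpha\ge0$ and $\varepsilon_\alpha\ge0$, we have $g_\alpha(u)\ge a_\alpha u$ for every $u\ge0$.

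For the first claim, suppose $a_\alpha>0$ for some $\alpha$. Then $g_\alpha(u)\ge a_\alpha u\to\infty$ as $u\to\infty$, so $x_\alpha=\infty$, which contradicts finiteness. Hence $a_\alpha\le0$ for every $\alpha$. This part uses only the nonnegativity of the square-root term; the boundedness by $\bar H$ is not needed here.

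For the second claim, set $b_\alpha\triangleq-a_\alpha\ge0$ and bound the supremum from above using $H_\alpha\le\bar H$:
\[
    x_\alpha\le\sup_{u\ge0}\left\{-b_\alpha u+\varepsilon_\alpha\sqrt{\bar H}\sqrt u\right\}.
\]
If $b_\alpha>0$, maximizing the concave function of $\sqrt u$ gives the closed form $\varepsilon_\alpha^2\bar H/(4b_\alpha)$. If $b_\alpha=0$, the upper bound may be infinite, but that case is already consistent with $a_\alpha=0$.

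Now argue by contradiction. Suppose $a_\alpha\not\to0$. Then there are $\delta>0$ and a sequence $\alpha_n\downarrow0$ with $b_{\alpha_n}\ge\delta$. Along this sequence,
\[
    x_{\alpha_n}\le\frac{\varepsilon_{\alpha_n}^2\bar H}{4\delta}\to0,
\]
because $\varepsilon_\alpha\to0$. This gives $\liminf x_\alpha\le0$, which contradicts $\liminf_{\alpha\downarrow0}x_\alpha>0$. Therefore $a_\alpha\to0$.

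The main obstacle is only bookkeeping. One must handle the case $b_\alpha=0$ separately, since the closed-form bound divides by $b_\alpha$; it is excluded along the contradicting subsequence because there $b_{\alpha_n}\ge\delta>0$. One must also use $x_\alpha\ge g_\alpha(0)=0$, which holds automatically, so the $\liminf$ hypothesis is meaningful.
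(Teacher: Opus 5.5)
Your proposal is correct and follows essentially the same route as the paper: the supremum diverges when $a_\alpha>0$, and along a subsequence with $a_{\alpha_n}\le-\delta$ the bound $x_{\alpha_n}\le\varepsilon_{\alpha_n}^2\bar H/(4\delta)\to0$ contradicts the $\liminf$ hypothesis.
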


\begin{proof}
If $a_\alpha>0$, the supremum is infinite, so finiteness implies $a_\alpha\le0$.
Suppose that $a_{\alpha_n}\le-\eta$ along a subsequence, for some $\eta>0$.
Then
\[
    x_{\alpha_n} \le \sup_{u\ge0} \left\{ -\eta u+\varepsilon_{\alpha_n}\sqrt{\bar H u} \right\}
    = \frac{\bar H\varepsilon_{\alpha_n}^2}{4\eta} \to 0 .
\]
This contradicts $\liminf_{\alpha\downarrow0}x_\alpha>0$.
Therefore, for every $\eta>0$, $a_\alpha>-\eta$ eventually.
Since $a_\alpha\le0$, this proves $a_\alpha\to 0$.
\end{proof}

\begin{lemma}[Uniform convergence of refined variance terms]\label{lem:refined_variance_uniform}
Under Assumption~\ref{assumption}, let $\rho_{\alpha}\to1$, let $I_{a,\alpha}(t)\triangleq I_a^{(1)}(\lambda_\alpha t)$ be the physical-time IDC in the $\alpha$th system, and write $\hat I_{w,\alpha}(t)$ as \eqref{eq:IDW_refined_RQ} with $I_{a,\alpha}$ and $\rho_\alpha$.
Let $w:[0,\infty)\to[0,\infty)$ be continuous and bounded.
Let $K\subset[0,\infty)$ be compact, let $Q_\alpha:K\to[0,\infty)$ satisfy
\[
    \sup_{z\in K}|Q_\alpha(z)-1|\to0,
\]
let $r_\alpha\to\infty$, and let $\theta_\alpha\to\theta\ge0$.
Then, for every $M<\infty$,
\[
    \sup_{z\in K}\sup_{0\le u\le M} \left| \sqrt{ \rho_{\alpha} Q_\alpha(z)u \hat I_{w,\alpha}(r_\alpha u) w(\theta_\alpha u) } - \sqrt{ c_x^2 u w(\theta u) } \right|
    \to0 .
\]
\end{lemma}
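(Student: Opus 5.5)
The plan is to reduce the claim to uniform convergence of the quantity under the square root, and to treat small $u$ separately, much as in the proof of Lemma~\ref{lem:IDW_uniform_time_change}. Write $G_\alpha(z,u)\triangleq\rho_\alpha Q_\alpha(z)u\,\hat I_{w,\alpha}(r_\alpha u)\,w(\theta_\alpha u)$ and $G(u)\triangleq c_x^2u\,w(\theta u)$. Both are nonnegative, and $|\sqrt x-\sqrt y|\le\sqrt{|x-y|}$ for $x,y\ge0$. It therefore suffices to show $\sup_{z\in K}\sup_{0\le u\le M}|G_\alpha(z,u)-G(u)|\to0$.

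\textbf{Uniform bounds.} Since $\rho_\alpha\to1$, eventually $\rho_\alpha\vee1\ge1$. Hence
\[
0\le\hat I_{w,\alpha}(t)\le \sup_{t\ge0}I_a^{(1)}(t)+1+c_s^2\triangleq\bar I<\infty,
\]
uniformly in $\alpha$ and $t$. Also $Q_\alpha\le2$ on $K$ eventually, $\rho_\alpha\le2$, and $w\le\|w\|_\infty$. So $G_\alpha(z,u)\le 4\bar I\|w\|_\infty u$ and $G(u)\le c_x^2\|w\|_\infty u$. Given $\varepsilon>0$, choose $\eta\in(0,M]$ so that both bounds are below $\varepsilon/2$ on $[0,\eta]$. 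The difference on $[0,\eta]$ is then at most $\varepsilon$, uniformly in $z$ and in small $\alpha$.

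\textbf{Convergence on $[\eta,M]$.} Split the error into factorwise differences using boundedness of each factor. The factor $\rho_\alpha Q_\alpha(z)\to1$ uniformly on $K$.

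For the IDC factor, note $I_{a,\alpha}(r_\alpha u)=I_a^{(1)}(\lambda_\alpha r_\alpha u)$. Since $\lambda_\alpha\to\mu>0$ and $r_\alpha\to\infty$, we have $\lambda_\alpha r_\alpha u\ge\lambda_\alpha r_\alpha\eta\to\infty$ uniformly on $[\eta,M]$. Assumption~\ref{assumption} ($I_a^{(1)}\to c_a^2$) then gives $I_{a,\alpha}(r_\alpha u)\to c_a^2$ uniformly. Combined with $1/(\rho_\alpha\vee1)\to1$, this yields $\hat I_{w,\alpha}(r_\alpha u)\to c_a^2+0+c_s^2=c_x^2$ uniformly on $[\eta,M]$.

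For the last factor, $w$ is continuous, hence uniformly continuous on the compact set $[0,\sup_\alpha\theta_\alpha M]$. Since $|\theta_\alpha u-\theta u|\le M|\theta_\alpha-\theta|\to0$, we get $w(\theta_\alpha u)\to w(\theta u)$ uniformly on $[0,M]$. This holds even when $\theta=0$.

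Multiplying the convergent, uniformly bounded factors by $u\le M$ gives uniform convergence of $G_\alpha$ to $G$ on $K\times[\eta,M]$. Since $\varepsilon$ is arbitrary, this completes the argument.

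\textbf{Main obstacle.} The only delicate point is the IDC factor near $u=0$. There $r_\alpha u$ need not be large, so $\hat I_{w,\alpha}(r_\alpha u)$ does not converge uniformly. This is exactly why the small-$u$ cutoff is needed; it relies on the uniform bound $\sup_t I_a^{(1)}(t)<\infty$ from Assumption~\ref{assumption} and the vanishing factor $u$.
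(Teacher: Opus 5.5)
Your proposal is correct and follows essentially the same route as the paper. In both, a uniform bound on the radicand (from $\sup_t I_a^{(1)}(t)<\infty$, $\rho_\alpha\vee1\ge1$, and boundedness of $w$ and $Q_\alpha$) disposes of $u\in[0,\eta]$. On $[\eta,M]$, both use $\lambda_\alpha r_\alpha\eta\to\infty$ to get $\hat I_{w,\alpha}(r_\alpha u)\to c_x^2$ uniformly, and the remaining factors converge uniformly. The only cosmetic difference is that you pass to the radicands via $|\sqrt x-\sqrt y|\le\sqrt{|x-y|}$, whereas the paper bounds the two square-root terms directly by $\sqrt{\bar H\eta}$ near zero.
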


\begin{proof}
Fix $M<\infty$.
The claim is trivial if $M=0$, so assume $M>0$.
Let $\bar H<\infty$ be a common upper bound for $\rho_{\alpha} Q_\alpha(z)\hat I_{w,\alpha}(t)w(\theta_\alpha u)$ and $c_x^2w(\theta u)$ for all sufficiently small $\alpha$, all $z\in K$, all $t\ge0$, and all $0\le u\le M$.
Such a bound exists by the boundedness assumptions.
Fix any $\eta\in(0,M]$.
For $0\le u\le\eta$, both square-root terms are bounded by $\sqrt{\bar H\eta}$, uniformly in $z\in K$ and $\alpha$.
For $\eta\le u\le M$, we have $r_\alpha u\ge r_\alpha\eta\to\infty$.
Choose $\lambda_*>0$ such that $\lambda_\alpha\ge\lambda_*$ for all sufficiently small $\alpha$.
The definition of $\hat I_{w,\alpha}$ gives
\[
    \sup_{u\in[\eta,M]}|\hat I_{w,\alpha}(r_\alpha u)-c_x^2|
    \le
    \sup_{s\ge\lambda_*r_\alpha\eta}|I_a^{(1)}(s)-c_a^2|+|1-c_a^2|\left|1-\frac{1}{\rho_\alpha\vee1}\right|
    \longrightarrow0.
\]
Also, $\rho_{\alpha} Q_\alpha(z)\to1$ uniformly over $z\in K$, and continuity of $w$ gives $w(\theta_\alpha u)\to w(\theta u)$ uniformly over $u\in[\eta,M]$.
Combining these facts gives uniform convergence on $K\times[\eta,M]$.
Letting $\eta\downarrow0$ proves the result.
\end{proof}

\subsection{Unified Proof of Theorems~\ref{Thm:RQ_HT} and~\ref{Thm:RQ_refined_HT}}\label{sec:proof_RQ_HT}

We prove the two RQ heavy-traffic theorems together because their scaled drift coefficients have the same limit in all three regimes.
The proof first rewrites the two fixed-point equations in a common form and then treats the underloaded, critically loaded, and overloaded regimes in that order.
In the first two regimes, compact-uniform convergence and Lemma~\ref{lem:localized_suprema} identify the limiting supremum, whereas in the overloaded regime Lemma~\ref{lem:coefficient_squeeze} identifies the limiting fluid balance.

Let $c_\alpha\triangleq\alpha^{-\gamma}(\rho_\alpha-1)$, so $c_\alpha\to c$, and let $\lambda_\alpha=\rho_\alpha\mu$.
Let $I_{a,\alpha}(t)\triangleq I_a^{(1)}(\lambda_\alpha t)$ and let $\hat I_{w,\alpha}$ denote the function in \eqref{eq:IDW_refined_RQ} evaluated with $I_{a,\alpha}$ and $\rho=\rho_\alpha$.
The index $j=1$ denotes the first RQ algorithm, and $j=2$ denotes the refined RQ algorithm.
For $j\in\{1,2\}$, write $Z_{j,\alpha}\triangleq Z_{\mathrm{RQ}_j,b}^\alpha$ and define
\[
\begin{aligned}
    a_{\alpha,1}(z)
    &\triangleq \rho_\alpha-\frac{1}{\bar F(\alpha z)},
    &q_{\alpha,1}(z,s)
    &\triangleq \frac{\rho_\alpha}{\mu}I_w(\lambda_\alpha s),\\
    a_{\alpha,2}(z)
    &\triangleq \rho_\alpha\bar F(\alpha z)-1,
    &q_{\alpha,2}(z,s)
    &\triangleq \frac{\rho_\alpha\bar F(\alpha z)}{\mu}\hat I_{w,\alpha}(s)w_{\tilde c_\alpha,k}(\alpha^{2h}\tau s).
\end{aligned}
\]
The change of variables used in \eqref{eq:RQ_ab_1} and the refined equation \eqref{eq:RQ_ab_2} give, for both $j=1$ and $j=2$,
\begin{equation}\label{eq:RQ_common_for_proof}
    Z_{j,\alpha}
    =\sup_{s\ge0}\left\{a_{\alpha,j}(Z_{j,\alpha})s+b\sqrt{s q_{\alpha,j}(Z_{j,\alpha},s)}\right\}.
\end{equation}
The functions $q_{\alpha,j}$ are bounded uniformly over all sufficiently small $\alpha$, all $z\ge0$, and all $s\ge0$.
For each fixed $\alpha$, their large-$s$ limits are strictly positive at every finite $z$, by $I_w(s)\to c_x^2>0$, Proposition~\ref{prop:w}, and the definition of $\hat I_{w,\alpha}$.
Consequently, finiteness of \eqref{eq:RQ_common_for_proof} forces $a_{\alpha,j}(Z_{j,\alpha})<0$.

\paragraph{Underloaded case $(c<0,\gamma<h)$.}

Let $d_\alpha\triangleq1-\rho_\alpha$, so $d_\alpha>0$ eventually and $d_\alpha\sim(-c)\alpha^\gamma$.
Set $\widehat Z_{j,\alpha}\triangleq d_\alpha Z_{j,\alpha}$.
Multiplying \eqref{eq:RQ_common_for_proof} by $d_\alpha$ and setting $u=d_\alpha^2s$ gives
\begin{equation}\label{eq:RQ_common_underloaded_scaled}
    \widehat Z_{j,\alpha}
    =\sup_{u\ge0}\left\{
        \frac{a_{\alpha,j}(d_\alpha^{-1}\widehat Z_{j,\alpha})}{d_\alpha}u
        +b\sqrt{u q_{\alpha,j}(d_\alpha^{-1}\widehat Z_{j,\alpha},d_\alpha^{-2}u)}
    \right\}.
\end{equation}
For both algorithms, $d_\alpha^{-1}a_{\alpha,j}(d_\alpha^{-1}z)\le-1$.
The uniform boundedness of $q_{\alpha,j}$ therefore implies $\widehat Z_{j,\alpha}\le\sup_{u\ge0}\{-u+C\sqrt u\}$, so both families of scaled solutions are bounded.

Let $K\subset[0,\infty)$ be a compact interval containing $\widehat Z_{j,\alpha}$ for both $j$ and all sufficiently small $\alpha$.
Applying Lemma~\ref{lem:uniform_F_expand} with its small parameter replaced by $\alpha/d_\alpha$ gives
\[
    \sup_{z\in K}\left|
        \frac{a_{\alpha,j}(d_\alpha^{-1}z)}{d_\alpha}+1
    \right|
    \longrightarrow0,
    \qquad j\in\{1,2\},
\]
because $F(\alpha z/d_\alpha)/d_\alpha=O(\alpha^k/d_\alpha^{k+1})\to0$ when $\gamma<h=k/(k+1)$.
For the first algorithm, Lemma~\ref{lem:IDW_uniform_time_change} gives, for every $M<\infty$,
\[
    \sup_{z\in K}\sup_{0\le u\le M}
    \left|
        \sqrt{u q_{\alpha,1}(d_\alpha^{-1}z,d_\alpha^{-2}u)}
        -\sqrt{\frac{c_x^2}{\mu}u}
    \right|
    \longrightarrow0.
\]
For the refined algorithm, \eqref{eq:ctilde_alpha} gives $\tilde c_\alpha\to-\infty$.
The time monotonicity and underloaded endpoint in Proposition~\ref{prop:w} give
\[
    \sup_{t\ge0}|w_{\tilde c_\alpha,k}(t)-1|
    \le1-w_{\tilde c_\alpha,k}(\infty)
    \longrightarrow0,
\]
so $w_{\tilde c_\alpha,k}(\alpha^{2h}\tau d_\alpha^{-2}u)$ may be replaced by $1$ uniformly for all $u\ge0$.
Lemma~\ref{lem:refined_variance_uniform}, applied with $r_\alpha=d_\alpha^{-2}$ and $Q_\alpha(z)=\bar F(\alpha z/d_\alpha)$, then gives the same compact-uniform variance convergence for $j=2$.
Thus, for either algorithm, the objective in \eqref{eq:RQ_common_underloaded_scaled} converges uniformly on $K\times[0,M]$ to
\[
    -u+b\sqrt{\frac{c_x^2}{\mu}u}.
\]
The common bound $-u+C\sqrt u$ localizes every supremum, so Lemma~\ref{lem:localized_suprema} yields
\[
    d_\alpha Z_{j,\alpha}
    \longrightarrow
    \sup_{u\ge0}\left\{-u+b\sqrt{\frac{c_x^2}{\mu}u}\right\}
    =\frac{b^2c_x^2}{4\mu},
    \qquad j\in\{1,2\}.
\]
Since $d_\alpha\sim(-c)\alpha^\gamma$ and $\E[Z_{M/M/1}]=\rho_\alpha/(\mu d_\alpha)$, this proves the underloaded conclusions of both theorems.

\paragraph{Critically loaded case $(\gamma\ge h)$.}

Set $\widehat Z_{j,\alpha}\triangleq\alpha^hZ_{j,\alpha}$.
Multiplying \eqref{eq:RQ_common_for_proof} by $\alpha^h$ and setting $u=\alpha^{2h}s$ gives
\begin{equation}\label{eq:RQ_common_critical_scaled}
    \widehat Z_{j,\alpha}
    =\sup_{u\ge0}\left\{
        \alpha^{-h}a_{\alpha,j}(\alpha^{-h}\widehat Z_{j,\alpha})u
        +b\sqrt{u q_{\alpha,j}(\alpha^{-h}\widehat Z_{j,\alpha},\alpha^{-2h}u)}
    \right\}.
\end{equation}
The scaled solutions are bounded for both algorithms.
Indeed, the boundedness of $q_{\alpha,j}$ and the strict negativity of the linear coefficient imply
\[
    \widehat Z_{j,\alpha}
    \le
    \frac{C^2}{4[-\alpha^{-h}a_{\alpha,j}(\alpha^{-h}\widehat Z_{j,\alpha})]}
\]
for a common finite constant $C$.
Fix $j\in\{1,2\}$ and suppose along a subsequence that $\widehat Z_{j,\alpha}\to\infty$.
Set $y_\alpha\triangleq\alpha^{1-h}\widehat Z_{j,\alpha}$ along that subsequence.
For both choices of $j$, and all sufficiently small $\alpha$,
\begin{equation}\label{eq:RQ_common_critical_drift_lower_bound}
    -\alpha^{-h}a_{\alpha,j}(\alpha^{-h}\widehat Z_{j,\alpha})
    \ge
    -\alpha^{-h}(\rho_\alpha-1)
    +\frac12\alpha^{-h}F(y_\alpha).
\end{equation}
For $j=1$, this follows from $1/\bar F-1\ge F$, and for $j=2$ it follows from $\rho_\alpha\ge1/2$ eventually.
Passing to a further subsequence, either $y_\alpha$ is bounded away from $0$ or $y_\alpha\to0$.
In the first case, the right-hand side of \eqref{eq:RQ_common_critical_drift_lower_bound} diverges to infinity.
In the second case, $F(y_\alpha)\ge(\beta/2)y_\alpha^k$ eventually, and $\alpha^{-h}y_\alpha^k=\widehat Z_{j,\alpha}^k$ because $k(1-h)=h$.
In both cases, $-\alpha^{-h}a_{\alpha,j}(\alpha^{-h}\widehat Z_{j,\alpha})\to\infty$, so the preceding upper bound forces $\widehat Z_{j,\alpha}\to0$, contradicting the assumed divergence.

Let $\widehat Z_{j,\alpha_n}\to z$ be an arbitrary convergent subsequence.
Lemma~\ref{lem:uniform_F_expand} gives, uniformly for $x$ in compact subsets of $[0,\infty)$,
\begin{equation}\label{eq:RQ_common_critical_drift_limit}
    \alpha^{-h}a_{\alpha,j}(\alpha^{-h}x)
    \longrightarrow
    \mathds{1}\{\gamma=h\}c-\beta x^k,
    \qquad j\in\{1,2\}.
\end{equation}
The definition \eqref{eq:ctilde_alpha} also gives
\[
    \tilde c_\alpha
    \longrightarrow
    \mathds{1}\{\gamma=h\}c
    \left(\frac{c_x^2}{2\mu}\right)^{-k/(k+1)}
    \beta^{-1/(k+1)}
    =\tilde c_\gamma.
\]
Define
\[
    \omega_1(u)\triangleq1,
    \qquad
    \omega_2(u)\triangleq w_{\tilde c_\gamma,k}(\tau u).
\]
For $j=1$, Lemma~\ref{lem:IDW_uniform_time_change}, applied with $a_\alpha=\rho_\alpha/\mu$ and $r_\alpha=\lambda_\alpha\alpha^{-2h}$, gives the required variance limit.
For $j=2$, the joint continuity in Proposition~\ref{prop:w} permits uniform replacement of $w_{\tilde c_\alpha,k}(\tau u)$ by $w_{\tilde c_\gamma,k}(\tau u)$ on compact $u$-intervals, and Lemma~\ref{lem:refined_variance_uniform} applies with $Q_\alpha(x)=\bar F(\alpha^{1-h}x)$, $r_\alpha=\alpha^{-2h}$, and $\theta_\alpha=\tau$.
Consequently, for every compact $K\subset[0,\infty)$ and every $M<\infty$,
\begin{equation}\label{eq:RQ_common_critical_variance_limit}
    \sup_{x\in K}\sup_{0\le u\le M}
    \left|
        \sqrt{u q_{\alpha,j}(\alpha^{-h}x,\alpha^{-2h}u)}
        -\sqrt{\frac{c_x^2}{\mu}\omega_j(u)u}
    \right|
    \longrightarrow0,
    \qquad j\in\{1,2\}.
\end{equation}

Finiteness of \eqref{eq:RQ_common_critical_scaled} and \eqref{eq:RQ_common_critical_drift_limit} imply $\mathds{1}\{\gamma=h\}c-\beta z^k\le0$.
Equality is impossible.
If equality held, then evaluating \eqref{eq:RQ_common_critical_scaled} at any fixed $u=U>0$ and using \eqref{eq:RQ_common_critical_variance_limit} would give
\[
    z\ge b\sqrt{\frac{c_x^2}{\mu}\omega_j(U)U}.
\]
Letting $U\to\infty$ gives a contradiction because $\omega_1(U)=1$ and $\omega_2(U)\to w_{\tilde c_\gamma,k}(\infty)>0$ by Proposition~\ref{prop:w}.
Hence $\mathds{1}\{\gamma=h\}c-\beta z^k<0$.

Choose a compact neighborhood $K$ of $z$ and $\eta>0$ such that $\mathds{1}\{\gamma=h\}c-\beta x^k\le-2\eta$ for every $x\in K$.
Equations \eqref{eq:RQ_common_critical_drift_limit} and \eqref{eq:RQ_common_critical_variance_limit} give compact-uniform convergence of the scaled objectives in \eqref{eq:RQ_common_critical_scaled} on $K\times[0,M]$.
The bound $-\eta u+C\sqrt u$ localizes the prelimit and limit suprema uniformly on $K$.
The limiting supremum is continuous in $x$ because the localized objective is jointly continuous.
Lemma~\ref{lem:localized_suprema} therefore yields
\begin{equation}\label{eq:RQ_common_critical_limit_fixed_point}
    z
    =\sup_{u\ge0}\left\{
        \left(\mathds{1}\{\gamma=h\}c-\beta z^k\right)u
        +b\sqrt{\frac{c_x^2}{\mu}\omega_j(u)u}
    \right\}.
\end{equation}
The right-hand side is positive for all sufficiently small positive $u$, so $z>0$.
The right-hand side of \eqref{eq:RQ_common_critical_limit_fixed_point} is nonincreasing in $z$, whereas the left-hand side is strictly increasing, so the fixed-point equation has at most one positive solution.
Every convergent subsequence therefore has the same limit, which proves convergence of the full sequence for each $j$.

For $j=1$, evaluating the elementary supremum in \eqref{eq:RQ_common_critical_limit_fixed_point} gives
\[
    -\mathds{1}\{\gamma=h\}cz+\beta z^{k+1}
    =\frac{b^2c_x^2}{4\mu},
\]
which is \eqref{eq:RQ_HT_equation}.
For $j=2$, equation \eqref{eq:RQ_common_critical_limit_fixed_point} is exactly \eqref{eq:RQ_HT_equation_general}.
This proves the critically loaded conclusions of both theorems.

\paragraph{Overloaded case $(c>0,\gamma<h)$.}

Set $\widehat Z_{j,\alpha}\triangleq\alpha^{1-\gamma/k}Z_{j,\alpha}$.
Multiplying \eqref{eq:RQ_common_for_proof} by $\alpha^{1-\gamma/k}$ and setting $u=\alpha^{1+(k-1)\gamma/k}s$ gives
\begin{equation}\label{eq:RQ_common_overloaded_scaled}
    \widehat Z_{j,\alpha}
    =\sup_{u\ge0}\left\{
        \alpha^{-\gamma}a_{\alpha,j}(\alpha^{-(1-\gamma/k)}\widehat Z_{j,\alpha})u
        +b\alpha^{\frac12[1-(k+1)\gamma/k]}
        \sqrt{u q_{\alpha,j}(\alpha^{-(1-\gamma/k)}\widehat Z_{j,\alpha},\alpha^{-1-(k-1)\gamma/k}u)}
    \right\}.
\end{equation}
The square-root coefficient tends to $0$ because $\gamma<h$, and its remaining factor is uniformly bounded.
Lemma~\ref{lem:coefficient_squeeze} first gives
\[
    \alpha^{-\gamma}a_{\alpha,j}(\alpha^{-(1-\gamma/k)}\widehat Z_{j,\alpha})\le0.
\]
For either $j$, this inequality is equivalent to $\rho_\alpha\bar F(\alpha^{\gamma/k}\widehat Z_{j,\alpha})\le1$, and hence
\begin{equation}\label{eq:RQ_common_overloaded_positive_bound}
    \rho_\alpha\alpha^{-\gamma}F(\alpha^{\gamma/k}\widehat Z_{j,\alpha})
    \ge c_\alpha.
\end{equation}
Equation \eqref{eq:RQ_common_overloaded_positive_bound} implies $\liminf_{\alpha\downarrow0}\widehat Z_{j,\alpha}>0$, because otherwise its left-hand side would converge to $0$ by Lemma~\ref{lem:uniform_F_expand}, while $c_\alpha\to c>0$.
The second conclusion of Lemma~\ref{lem:coefficient_squeeze} now gives
\[
    \alpha^{-\gamma}a_{\alpha,j}(\alpha^{-(1-\gamma/k)}\widehat Z_{j,\alpha})\longrightarrow0.
\]
Since
\[
    \alpha^{-\gamma}\left(\rho_\alpha\bar F(\alpha^{\gamma/k}\widehat Z_{j,\alpha})-1\right)
    =
    \begin{cases}
        \bar F(\alpha^{\gamma/k}\widehat Z_{1,\alpha})\alpha^{-\gamma}a_{\alpha,1}(\alpha^{-(1-\gamma/k)}\widehat Z_{1,\alpha}), & j=1,\\
        \alpha^{-\gamma}a_{\alpha,2}(\alpha^{-(1-\gamma/k)}\widehat Z_{2,\alpha}), & j=2,
    \end{cases}
\]
we obtain, for both algorithms,
\[
    \rho_\alpha\alpha^{-\gamma}F(\alpha^{\gamma/k}\widehat Z_{j,\alpha})
    =c_\alpha-\alpha^{-\gamma}\left(\rho_\alpha\bar F(\alpha^{\gamma/k}\widehat Z_{j,\alpha})-1\right)
    \longrightarrow c.
\]
It follows that $F(\alpha^{\gamma/k}\widehat Z_{j,\alpha})\to0$ and hence $\alpha^{\gamma/k}\widehat Z_{j,\alpha}\to0$ by the full-support assumption.
Assumption~\ref{assumption:F} then gives
\[
    \rho_\alpha\alpha^{-\gamma}F(\alpha^{\gamma/k}\widehat Z_{j,\alpha})
    =\rho_\alpha\beta\widehat Z_{j,\alpha}^k(1+o(1)).
\]
Therefore $\beta\widehat Z_{j,\alpha}^k\to c$, and
\[
    \alpha^{1-\gamma/k}Z_{j,\alpha}\longrightarrow\left(\frac{c}{\beta}\right)^{1/k},
    \qquad j\in\{1,2\}.
\]
Finally,
\[
    \frac{\rho_\alpha F(\alpha Z_{j,\alpha})}{\rho_\alpha-1}
    =\frac{\rho_\alpha\alpha^{-\gamma}F(\alpha^{\gamma/k}\widehat Z_{j,\alpha})}{c_\alpha}
    \longrightarrow1,
\]
which proves the equivalent overloaded statements and completes both proofs.

\end{document}